\documentclass{article}
\usepackage{amsmath,amsthm,amssymb,bbm}

\newcommand{\assign}{:=}

\newcommand{\tmop}[1]{\ensuremath{\operatorname{#1}}}

\newcommand{\tmstrong}[1]{\textbf{#1}}

\newtheorem{corollary}{Corollary}
\newtheorem{lemma}{Lemma}
\newtheorem{proposition}{Proposition}
\newtheorem{theorem}{Theorem}

\newcommand{\XXint}[3]{{\setbox}0=\text{\ensuremath{#1 #2 #3 \int}}
{\vcenter{\text{\ensuremath{#2 #3}}}}{\kern}-.5{\tmwd}0}

\newcommand{\opn}[2]{\newcommand{\1}{\}} {\opn}{\Rm{Rm}} {\opn}{\Ric{Ric}}
{\opn}{\Rc{Rc}} {\opn}{\Scal{Sc}} {\opn}{\Tr{Tr}} {\opn}{\Trac{Tr}}
{\opn}detdet {\opn}{\diam{diam}} {\opn}{\dist{dist}} {\opn}{\Im}Im
{\opn}{\div}div {\opn}{\Ker{Ker}} {\opn}expexp {\opn}{\Vol{Vol}}
{\opn}{\exph{exph}} {\opn}{\Herm{Herm}} {\opn}{\End{End}} {\opn}{\Hess{Hess}}
{\opn}{\Vol{Vol}}}

\newcommand{\R}{\mathbb{R}}

\newcommand{\I}{\mathbb{I}}

\newcommand{\contract}{{\kern}-1.5pt{\vrule} width6.0pt height0.4pt depth0pt
{\vrule} width0.4pt height4.0pt depth0pt}
\newcommand{\retract}{{\kern}-1.5pt{\vrule} width0.4pt height4.0pt depth0pt
{\vrule} width6.0pt height0.4pt depth0pt}
\newcommand{\Openbox}{{\leavevmode} {\text{{\hfil}{\vrule}
width{\boxrulethickness} {\vbox} to{\Openboxwidth{{\advance}{\Openboxwidth}
-2{\boxrulethickness} {\hrule} height {\boxrulethickness}
width{\Openboxwidth}{\vfil} {\hrule} height{\boxrulethickness}}}{\vrule}
width{\boxrulethickness}{\hfil} }}}

\begin{document}

\title{ The total second variation of Perelman's $\mathcal{W}$-functional }\author{\\
{\tmstrong{NEFTON PALI}}}\maketitle

\begin{abstract}
  We show a very simple and general total second variation formula for
  Perelman's $\mathcal{W}$-functional at arbitrary points in the space of
  Riemannian metrics. Moreover we perform a study of the
  properties of the variations of K\"ahler structures. 
We deduce a quite simple and
  general total second variation formula for Perelman's
  $\mathcal{W}$-functional with respect to such variations. 
In this case the main therm in the formula depends 
strongly on the variation of the complex structure. We discover also convexity of Perelman's
  $\mathcal{W}$-functional along particular variations over points with non-negative Bakry-Emery-Ricci tensor.
\end{abstract}

\section{Introduction}

Total second variation formulas for Perelman's $\mathcal{W}$-functional at a 
shrinking Ricci soliton point were obtained independently by 
Cao-Hamilton-Ilmanenen \cite{C-H-I}, Cao-Zhu \cite{Ca-Zhu} and Tian-Zhu \cite{Ti-Zhu}. 
The work of Cao-Zhu \cite{Ca-Zhu} is based on the previous work of Cao-Hamilton-Ilmanenen \cite{C-H-I}.
Important applications to the stability and the convergence 
of the K\"ahler-Ricci flow over Fano manifolds were 
given by Tian-Zhu \cite{Ti-Zhu} and by Tian-Zhang-Zhang-Zhu \cite{T-Z-Z-Z}.
The second variation formulas obtained by the previous authors are of
different nature from ours. We explain now our set-up and results.

Let $(X, g)$ be a compact oriented (for simplicity) Riemannian manifold of
dimension $m$ and let $f$ be a smooth real valued function over $X$. We remind
that Perelman's $\mathcal{W}$-functional is defined (up to a constant) by the
formula
\begin{eqnarray*}
  \mathcal{W} (g, f) & : = & \int_X \left( | \nabla_g f|^2_g \;\, +
  \;\, \tmop{Scal}_g \;\, + \;\, 2\, f
  \;\, -\;\, m \right) e^{- f} dV_g \;.
\end{eqnarray*}
Let also $\Omega >0$ be a smooth volume form over $X$. We remind that a remarkable result due to Perelman \cite{Per} asserts that the first
variation of the functional
\[ \mathcal{W}_{\Omega} (g) \hspace{0.75em} : = \hspace{0.75em} \mathcal{W}
   (g, \log (dV_g / \Omega)) \hspace{0.25em}, \]
at a point $g$ is given by the formula
\begin{eqnarray*}
  D_g  \,\mathcal{W}_{\Omega} \,(v) & = & \int_X \left\langle v\,,\, g \;\,
  - \;\, \tmop{Ric}_g (\Omega) \right\rangle_g \Omega
  \hspace{0.25em},
\end{eqnarray*}
where $\tmop{Ric}_g (\Omega)$ is the $\Omega$-Bakry-Emery-Ricci tensor. (In
the appendix \ref{apdx2} we give a straightforward proof of this formula based
on the variation formula for the $\Omega$-Bakry-Emery-Ricci tensor.) This
formula shows that Perelman's modified Ricci flow type equation is the
gradient flow of the functional $\mathcal{W}_{\Omega}$ with respect to the
metric 
$$
G \;\;\assign\;\; \int_X \left\langle \cdot, \cdot \right\rangle_g \Omega\;\,
$$
over the space of Riemannian metrics. We remind that the space of Riemannian
metrics equipped with this metric is a non-positively curved space with
geodesics given by the formula 
$$
g_t \;\;=\;\; g_0 \,e^{t \,g^{- 1}_0 \dot{g}_0}\;.
$$
Thus it
is natural to compute the total second variation of the functional
$\mathcal{W}_{\Omega}$ with respect to the metric $G$. In this paper we \
obtain the following quite simple and general second variation result at arbitrary points in
the space of Riemannian metrics.

\begin{theorem}
  \label{lm-IIvr-W}Let $(X, g)$ be a compact and orientable Riemannan manifold
  and let $\Omega > 0$ be a smooth volume form. Along any smooth curve
  $(g_t)_{t \in (- \varepsilon, \varepsilon)}$ of Riemannian metrics hold the
  second variation formula
\begin{eqnarray}  
    \label{rm-2vr-W} \nabla_G \,D\, \mathcal{W}_{\Omega}\, (g_t)\, ( \dot{g}_t,
    \dot{g}_t) &= &\int_X \left\langle \dot{g}_t \cdot
    \tmop{Ric}^{\ast}_{g_t} (\Omega)\,,\, \dot{g}_t \right\rangle_{g_t} \Omega\nonumber
\\\nonumber
\\
&+&
    \int_X \left[ \,\frac{1}{6} \hspace{0.25em} \big|
    \hat{\nabla}_{g_t}  \,\dot{g}_t \big|^2_{g_t} \;\, - \;\,
    \big| \nabla_{g_t} \,\dot{g}_t \big|^2_{g_t} \right] \Omega \hspace{0.25em},
\end{eqnarray}
  where $\tmop{Ric}^{\ast}_{g_t} (\Omega)$ denotes the endomorphism section
  associated to the $\Omega$-Bakry-Emery-Ricci tensor and $\hat{\nabla}_g$
  denotes the symmetrization of $\nabla_g$
  acting on symmetric 2-tensors.
\end{theorem}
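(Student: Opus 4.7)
The plan is to differentiate Perelman's first variation formula along the curve and extract the covariant Hessian by subtracting the contribution of the Levi-Civita connection of $G$. By definition,
\begin{equation*}
\nabla_G D\mathcal{W}_\Omega(g_t)(\dot g_t,\dot g_t) \;=\; \tfrac{d}{dt}\bigl[D\mathcal{W}_\Omega(g_t)(\dot g_t)\bigr] \;-\; D\mathcal{W}_\Omega(g_t)\bigl(\nabla^G_{\dot g_t}\dot g_t\bigr).
\end{equation*}
Since the $G$-geodesics are $g_t = g_0\, e^{t\, g_0^{-1}\dot g_0}$, reading off the geodesic equation $\ddot g = \dot g\, g^{-1}\dot g$ gives $\nabla^G_{\dot g}\dot g = \ddot g - \dot g\, g^{-1}\dot g$ along an arbitrary curve. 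I would then expand $\tfrac{d}{dt}\int_X\langle\dot g_t,\, g_t-\operatorname{Ric}_{g_t}(\Omega)\rangle_{g_t}\,\Omega$ via the product rule, using three inputs: $\partial_t g_t = \dot g_t$; the variation of the pointwise inner product $\langle\cdot,\cdot\rangle_{g_t}$ (which produces factors of $g_t^{-1}\dot g_t$); and the formula for $\partial_t\operatorname{Ric}_{g_t}(\Omega)$ proved in the appendix.

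The elementary algebraic cancellations work out cleanly: the $\ddot g_t$-pieces cancel against the corresponding part of the connection correction, the purely $g_t$-dependent contributions (from the identity summand $g$ inside $g-\operatorname{Ric}(\Omega)$, from the variation of the inner product, and from the remaining part of the connection correction) annihilate amongst themselves, and what survives is exactly $\int_X\langle\dot g_t\cdot\operatorname{Ric}^*_{g_t}(\Omega),\dot g_t\rangle_{g_t}\Omega$, which reproduces the first term of (\ref{rm-2vr-W}). The whole Hessian then reduces to this endomorphism term plus the still-to-be-analyzed piece $-\int_X\langle\dot g_t,\partial_t\operatorname{Ric}_{g_t}(\Omega)\rangle_{g_t}\Omega$, which the formula claims equals $\int_X[\tfrac{1}{6}|\hat\nabla\dot g_t|^2 - |\nabla\dot g_t|^2]\,\Omega$.

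For this last identity I would plug in the variation formula for $\operatorname{Ric}_g(\Omega)$, which expresses $\partial_t\operatorname{Ric}_g(\Omega)[v]$ (with $v=\dot g_t$) as a Lichnerowicz-type second-order linear operator in $v$ of the schematic form $\tfrac{1}{2}(-\nabla^k\nabla_k v_{ij} + \nabla_i\nabla^k v_{jk} + \nabla_j\nabla^k v_{ik} - \nabla_i\nabla_j\operatorname{tr}v)$, modulo curvature and $\nabla^2 f$-corrections, where $f:=\log(dV_g/\Omega)$. Contracting against $v$ and integrating by parts with respect to the weighted volume $\Omega = e^{-f}dV_g$, the $f$-drift terms absorb the Bakry-Emery corrections and produce a quadratic form in $\nabla v$ of two types only: the diagonal $|\nabla v|^2$ and the cross term $\nabla_i v_{jk}\nabla^j v^{ik}$. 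Combining them with the pointwise symmetrization identity
\begin{equation*}
|\hat\nabla v|^2_g \;=\; \tfrac{1}{3}|\nabla v|^2_g \;+\; \tfrac{2}{3}\,\nabla_i v_{jk}\nabla^j v^{ik},
\end{equation*}
valid for any symmetric $2$-tensor $v$, the coefficients match and yield the claimed $\tfrac{1}{6}|\hat\nabla v|^2 - |\nabla v|^2$.

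The main obstacle is the bookkeeping of the curvature and lower-order terms: every commutator of covariant derivatives produced by integration by parts contributes a Riemann tensor contraction, and the variation of $\operatorname{Ric}_g(\Omega)$ itself carries $\nabla^2 f$-pieces. One must verify that all such zeroth-order contributions — from commutators, from the drift weight, and from the variation formula — either vanish identically or reassemble exactly into the $\operatorname{Ric}^*_g(\Omega)$-endomorphism term already extracted in the algebraic step, with no stray first- or zeroth-order remainder; and confirming the precise coefficient $\tfrac{1}{6}$ reduces, after these cancellations, to matching the coefficient of the cross term $\nabla_i v_{jk}\nabla^j v^{ik}$ with the one given by the symmetrization identity.
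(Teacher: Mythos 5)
Your first half is exactly the paper's argument: you compute the Hessian as $\frac{d}{dt}\bigl[D\mathcal{W}_{\Omega}(g_t)(\dot g_t)\bigr]-D\mathcal{W}_{\Omega}(g_t)\bigl(\ddot g_t-\dot g_t\,g_t^{-1}\dot g_t\bigr)$ using the Christoffel symbol (\ref{Krist-G}), and the trace algebra that cancels the $\ddot g$-terms and the identity part of $g_t-\tmop{Ric}_{g_t}(\Omega)$, leaving $\int_X\tmop{Tr}_{_{\mathbbm{R}}}\bigl[(\dot g_t^{\ast})^2\tmop{Ric}^{\ast}_{g_t}(\Omega)\bigr]\Omega-\int_X\bigl\langle\dot g_t,\frac{d}{dt}\tmop{Ric}_{g_t}(\Omega)\bigr\rangle_{g_t}\Omega$, is the same computation as in the paper. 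The gap is in how you treat the remaining term, and it is twofold. First, your pointwise symmetrization identity is normalized for the \emph{averaged} symmetrization $\frac13\sum$; with the paper's $\hat\nabla_g$ (the plain sum over cyclic insertions, which is the operator appearing in (\ref{rm-2vr-W})) the correct identity is $|\hat\nabla_g v|^2_g=3|\nabla_g v|^2_g+6\,\nabla_i v_{jk}\nabla^j v^{ik}$, equivalently $\frac16|\hat\nabla_g v|^2_g=\frac12|\nabla_g v|^2_g+\nabla_i v_{jk}\nabla^j v^{ik}$. The weighted integration by parts produces the integrand $-\frac12|\nabla_g v|^2_g+\nabla_i v_{jk}\nabla^j v^{ik}$, which matches $\frac16|\hat\nabla v|^2-|\nabla v|^2$ only with these constants; plugging in your identity instead yields the coefficient $\frac32$ rather than $\frac16$, so your coefficient-matching step fails as written.

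Second, the statement that all curvature, $\nabla^2 f$ and other zeroth-order contributions ``either vanish identically or reassemble into the $\tmop{Ric}^{\ast}_g(\Omega)$-term already extracted'' is exactly the unproved heart of your route, and the second alternative cannot occur: the $\tmop{Ric}^{\ast}$-term is already fully produced by the algebraic step, so every zeroth-order remainder must cancel outright, and your proposal never verifies this. The paper sidesteps the whole issue: Lemma \ref{var-Om-Ric} gives $2\,\frac{d}{dt}\tmop{Ric}_{g_t}(\Omega)=e^{f_t}\tmop{div}_{g_t}\bigl(e^{-f_t}(\hat\nabla_{g_t}-2\nabla_{g_t})\dot g_t\bigr)$ with \emph{no} curvature terms, because the divergence is kept outermost and the Hessian of $\dot f_t=\frac12\tmop{Tr}_{g_t}\dot g_t$ cancels the $\nabla_{g_t}d\,\tmop{Tr}_{g_t}\dot g_t$ piece; equivalently (Corollary \ref{vr-ORc-adj}, using $\hat\nabla^{\ast}_g=3\nabla^{\ast}_g$ on symmetric $3$-tensors, identity (\ref{cntr-smfm})) one has $\frac{d}{dt}\tmop{Ric}_{g_t}(\Omega)=\nabla^{\ast_{\Omega}}_{g_t}\nabla_{g_t}\dot g_t-\frac16\hat\nabla^{\ast_{\Omega}}_{g_t}\hat\nabla_{g_t}\dot g_t$, and a single $\Omega$-weighted integration by parts yields $\int_X\bigl[\frac16|\hat\nabla_{g_t}\dot g_t|^2_{g_t}-|\nabla_{g_t}\dot g_t|^2_{g_t}\bigr]\Omega$ with no bookkeeping. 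If you insist on starting from the Lichnerowicz-type form with the divergences taken innermost, you must actually carry out the cancellation of the commutator and drift terms and correct the normalization of the symmetrization identity; as it stands the proof is incomplete at precisely that point.
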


This formula suggest naturally the introduction of the vector space
\begin{eqnarray*}
  \mathbbm{F}_g & \assign & \left\{ v \in C^{\infty} \left( X,
  S_{_{\mathbbm{R}}}^2 T^{\ast}_X \right) \mid \hspace{0.25em} \nabla_{_{T_X,
  g}} \,v_g^{\ast} \;\,=\;\, 0\, \right\}\;,
\end{eqnarray*}
where $\nabla_{_{T_X, g}}$ denotes the covariant exterior derivative acting on
$T_X$-valued differential forms and $v^{\ast}_g \assign g^{- 1} v$ denotes the endomorphism section
  associated to $v$. Indeed we observe the following corollary.

\begin{corollary}
  \label{cor-II-Var-W}Let $(X, g)$ be a compact and orientable Riemannan
  manifold and let $\Omega > 0$ be a smooth volume form. Then for all $v \in
  \mathbbm{F}_g$ hold the second variation formula
  \begin{equation}
    \label{part-vr-W} \nabla_G\, D\, \mathcal{W}_{\Omega} \,(g)\, (v, v) = \int_X \left[
    \left\langle v \cdot \tmop{Ric}^{\ast}_g (\Omega)\,,\, v \right\rangle_g \;\,+\;\,
    \frac{1}{2}\; \big| \nabla_g \,v\big|^2_g \right] \Omega \;.
  \end{equation}
  In particular if $g$ satisfies the inequality $\tmop{Ric}_g (\Omega)
  \geqslant \varepsilon g,$ for some $\varepsilon \in \mathbbm{R}_{> 0},$ then
  the bilinear form
  \begin{eqnarray*}
    \nabla_G \,D\, \mathcal{W}_{\Omega} \,(g)  \;: \; \mathbbm{F}_g \times
    \mathbbm{F}_g \;\longrightarrow \;\mathbbm{R}\;,
  \end{eqnarray*}
  is positive definite and for all $v \in \mathbbm{F}_g$ hold the inequality
  \begin{eqnarray}
    \label{ineq-vr-W}\nabla_G \,D\, \mathcal{W}_{\Omega} \,(g)\, (v, v) \;\; \geqslant \;\; \int_X \left[
    \varepsilon \,|v|^2_g \;\,+\;\, \frac{1}{2}\; \big| \nabla_g\, v\big|^2_g  \right] \Omega
    \;\;\geqslant\;\; 0 \;.
  \end{eqnarray}
\end{corollary}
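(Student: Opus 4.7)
The plan is to apply Theorem \ref{lm-IIvr-W} to any smooth curve $(g_t)$ with $g_0 = g$ and $\dot g_0 = v \in \mathbbm{F}_g$ and evaluate at $t = 0$, observing that the bracketed term in \eqref{rm-2vr-W} collapses to $\tfrac12 |\nabla_g v|^2_g$ under the flatness condition defining $\mathbbm{F}_g$. Since the right-hand side of \eqref{rm-2vr-W} at $t = 0$ depends only on the pair $(g, v)$, the problem reduces to a pointwise algebraic simplification.

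The crucial observation is that the defining condition $\nabla_{_{T_X, g}}\, v_g^\ast = 0$ forces the $3$-tensor $\nabla_g v$ to be totally symmetric. Expanding the covariant exterior derivative of $v_g^\ast$, viewed as a $T_X$-valued $1$-form, in local components and exploiting the parallelism of $g$, the vanishing condition reads, after lowering an index, $\nabla_i v_{jk} = \nabla_k v_{ji}$. Combined with the symmetry $\nabla_i v_{jk} = \nabla_i v_{kj}$ inherited from $v$, these two relations generate the full symmetric group acting on the indices $(i,j,k)$, so $\nabla_g v$ is invariant under every permutation of its three slots.

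With total symmetry of $\nabla_g v$ at hand, the symmetrization $\hat{\nabla}_g v$, with its natural normalization $\hat{\nabla}_i v_{jk} = \nabla_i v_{jk} + \nabla_j v_{ki} + \nabla_k v_{ij}$ (three times the totally symmetric part), reduces to $3\nabla_g v$, and therefore $|\hat\nabla_g v|^2_g = 9 |\nabla_g v|^2_g$. Substituting this identity into \eqref{rm-2vr-W} produces the coefficient $\tfrac16 \cdot 9 - 1 = \tfrac12$ in front of $|\nabla_g v|^2_g$ and yields \eqref{part-vr-W}.

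For the remaining assertions, the hypothesis $\tmop{Ric}_g(\Omega) \geqslant \varepsilon g$ is equivalent to $\tmop{Ric}^\ast_g(\Omega) \geqslant \varepsilon I$ as a $g$-self-adjoint endomorphism. Writing the first integrand pointwise as $\langle v \cdot \tmop{Ric}^\ast_g(\Omega), v\rangle_g = \tmop{Tr}\bigl( \tmop{Ric}^\ast_g(\Omega) (v_g^\ast)^2 \bigr)$ and invoking the elementary inequality $\tmop{Tr}(AB) \geqslant \varepsilon \tmop{Tr}(B)$, valid whenever $A \geqslant \varepsilon I$ and $B \geqslant 0$ (proved by writing $B = C^2$ with $C$ self-adjoint and using cyclicity of the trace), yields the pointwise lower bound $\varepsilon |v|^2_g$. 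Integrating against $\Omega$ produces \eqref{ineq-vr-W}, and the positive definiteness of the bilinear form follows at once from the strict positivity of $\varepsilon$ together with the $|\nabla_g v|^2_g$ contribution. I expect the only delicate point to be the bookkeeping needed to deduce the total symmetry of $\nabla_g v$ from the exterior-derivative condition on $v_g^\ast$; once that identification is in place, everything else is direct substitution and pointwise linear algebra.
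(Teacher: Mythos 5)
Your proposal is correct and follows essentially the same route as the paper: you deduce from $v\in\mathbbm{F}_g$ that $\nabla_g v$ is totally symmetric, hence $\hat{\nabla}_g v=3\nabla_g v$, substitute into (\ref{rm-2vr-W}) to get the coefficient $\tfrac16\cdot 9-1=\tfrac12$, and then obtain (\ref{ineq-vr-W}) from the pointwise trace estimate $\tmop{Tr}\bigl[(v_g^{\ast})^2\tmop{Ric}^{\ast}_g(\Omega)\bigr]\geqslant\varepsilon\,|v|^2_g$, which is exactly the paper's orthonormal-basis computation in slightly different clothing. The only remark is that the most natural reading of $\nabla_{_{T_X,g}}v_g^{\ast}=0$ gives symmetry of $\nabla_g v$ in its first two slots rather than the relation you wrote, but either relation combined with the symmetry of $v$ generates the full symmetric group on the three indices, so your conclusion stands.
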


An other surprising consequence of the proof of Theorem \ref{lm-IIvr-W}
is a drastically simple second variation formula for the functional
$\mathcal{W}_{\Omega}$ over the space of K\"ahler metrics with respect to a
fixed complex structure.

\begin{lemma}
  \label{Kah-vr-W-fxcx}Let $(X, g, J)$ be a compact K\"ahler manifold and let
  $\Omega > 0$ be a smooth volume form. For all $J$-invariant $v \in C^{\infty}
  \left( X, S_{_{\mathbbm{R}}}^2 T^{\ast}_X \right)$ such that the
  differential form $v J$ is $d$-closed hold the second variation formula
  \begin{eqnarray*}
    \nabla_G D \mathcal{W}_{\Omega} (g) (v, v) & = & \int_X \left\langle v
    \cdot \tmop{Ric}^{\ast}_g (\Omega)\,,\, v \right\rangle_g \Omega \;.
  \end{eqnarray*}
\end{lemma}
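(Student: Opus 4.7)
The plan is to specialize the general total second variation formula \eqref{rm-2vr-W} of Theorem \ref{lm-IIvr-W} at $t=0$ with $\dot g_0 = v$, so that
\[
\nabla_G\, D\, \mathcal{W}_\Omega (g)(v,v) \;=\; \int_X \left\langle v \cdot \tmop{Ric}^\ast_g (\Omega),\, v \right\rangle_g \Omega \;+\; \int_X \left[ \tfrac{1}{6}\big| \hat\nabla_g v\big|^2_g - \big| \nabla_g v\big|^2_g \right] \Omega,
\]
and to show that the K\"ahler hypotheses on $v$ force the integrand of the second term to vanish \emph{pointwise}, whence the lemma follows immediately.

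First I would translate the two hypotheses into explicit derivative identities. The $J$-invariance of $v$ means $v$ is of pure Hodge type $(1,1)$: in local holomorphic coordinates $v_{\alpha\beta} = v_{\bar\alpha\bar\beta} = 0$ and only the components $v_{\alpha\bar\beta}$ survive. Since $\nabla J = 0$ on the K\"ahler manifold, the equation $d(vJ) = 0$ splits by bidegree into
\[ \nabla_\gamma v_{\alpha\bar\beta} \;=\; \nabla_\alpha v_{\gamma\bar\beta}, \qquad \nabla_{\bar\gamma} v_{\alpha\bar\beta} \;=\; \nabla_{\bar\beta} v_{\alpha\bar\gamma}, \]
a K\"ahler analogue of the Codazzi-type condition defining $\mathbbm{F}_g$.

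Next I would compute $\hat\nabla_g v$ in a unitary holomorphic frame at a point. The pure-type components of $\hat\nabla_g v$ vanish because $v$ has no $(2,0)$ or $(0,2)$ part, while on the mixed slots the two surviving summands of the full symmetrization are made equal by the Codazzi identities above, giving
\[ (\hat\nabla_g v)_{\gamma\alpha\bar\beta} \;=\; \nabla_\gamma v_{\alpha\bar\beta} + \nabla_\alpha v_{\gamma\bar\beta} \;=\; 2\,\nabla_\gamma v_{\alpha\bar\beta}, \]
together with its conjugate on the $(1,2)$-type slots. Finally I would carry out the combinatorial comparison of norms: because $\hat\nabla_g v$ is totally symmetric while $\nabla_g v$ is symmetric only in its last two indices, contracting each against the inverse Hermitian metric produces different combinatorial factors. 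After using reality to identify $(1,0)$- and $(0,1)$-type contributions, the identity $\tfrac{1}{6}\big|\hat\nabla_g v\big|^2_g = \big|\nabla_g v\big|^2_g$ will emerge pointwise.

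The main technical obstacle will be the last step: keeping the combinatorial factors straight and confirming that the two K\"ahler Codazzi identities alone suffice to produce the pointwise equality of $\tfrac{1}{6}|\hat\nabla_g v|^2_g$ and $|\nabla_g v|^2_g$, whereas the stronger real Codazzi condition defining $\mathbbm{F}_g$ would yield instead the pointwise value $\tfrac{1}{2}|\nabla_g v|^2_g$ recovered in Corollary \ref{cor-II-Var-W}. Once this bookkeeping is completed, no further integration by parts is required and the lemma follows at once.
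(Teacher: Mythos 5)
Your proposal is correct, but it follows a genuinely different route from the paper's. You specialize (\ref{rm-2vr-W}) to a curve with $\dot g_0=v$ and eliminate the derivative term by the pointwise identity $\tfrac16|\hat\nabla_g v|^2_g=|\nabla_g v|^2_g$; this identity does hold, and your two Codazzi-type identities are exactly what is needed: with the paper's norm convention one has $\tfrac16|\hat\nabla_g v|^2_g=\tfrac12|\nabla_g v|^2_g+\sum_{i,j,k}\nabla_g v(e_i,e_j,e_k)\,\nabla_g v(e_j,e_i,e_k)$, and after complexifying only the mixed components $\nabla_\gamma v_{\alpha\bar\beta}$ (and their conjugates) survive because $v$ is of type $(1,1)$ and $\nabla_g J=0$, while $\partial(vJ)=0$, i.e. $\nabla_\gamma v_{\alpha\bar\beta}=\nabla_\alpha v_{\gamma\bar\beta}$, forces the cross term to equal $\tfrac12|\nabla_g v|^2_g$; for $v\in\mathbbm{F}_g$ the same expansion leaves $\tfrac12|\nabla_g v|^2_g$ as the difference, exactly as you predict, so your consistency check is sound. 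The paper never proves this pointwise identity: it returns to the intermediate formula in the proof of Theorem \ref{lm-IIvr-W}, before the integration by parts of Corollary \ref{vr-ORc-adj}, namely $\nabla_G D\mathcal{W}_\Omega(g_t)(\dot g_t,\dot g_t)=\int_X\langle \dot g_t\cdot\tmop{Ric}^{\ast}_{g_t}(\Omega),\dot g_t\rangle_{g_t}\Omega-\int_X\langle\dot g_t,\tfrac{d}{dt}\tmop{Ric}_{g_t}(\Omega)\rangle_{g_t}\Omega$, chooses a curve of $J$-invariant K\"ahler metrics through $g$ with velocity $v$, and uses the decomposition (\ref{cx-dec-Ric}) (with $J$ and $\Omega$ fixed, so the $\tmop{Ric}_J(\Omega)$ part is constant in $t$) to see that $\tfrac{d}{dt}\tmop{Ric}_{g_t}(\Omega)$ is built from $J$-anti-linear endomorphisms, hence pointwise trace-orthogonal to the $J$-linear $\dot g_t^{\ast}$, so the second integrand vanishes identically. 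The paper's argument buys a coordinate-free explanation of the cancellation; yours buys a stronger pointwise tensor identity on the tangent space to the $J$-invariant K\"ahler cone (which also makes the comparison with Corollary \ref{cor-II-Var-W} transparent), at the price of the local-coordinate bookkeeping, which you should carry out with the paper's convention that the metric on symmetric tensors carries no combinatorial factors.
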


In order to obtain our general second variation formula for the functional
$\mathcal{W}_{\Omega}$ with respect to more general variations of K\"ahler
structures we need to perform a detailed study of their properties. This is
done in detail in the section \ref{Prop-vr-KStr}. Our main result is the
following.

\begin{theorem}
  {\bf{(Main Result)}}. \label{lm-Kah-IIvr-W}Let $(X, g,
  J)$ be a compact K\"ahler manifold, let $\Omega > 0$ be a smooth volume form
  and let $(g_t, J_t)_{t \in (- \varepsilon,
  \varepsilon)} \text{}, g = g_0, J = J_0$ be a smooth family of K\"ahler
  structures such that $\dot{J}_t = ( \dot{J}_t)_{g_t}^T .$ Then at the point
  $g$ hold the second variation formula in the direction $v \assign \dot{g}_0$
  \begin{eqnarray*}
    \nabla_G \,D \,\mathcal{W}_{\Omega} (g) (v, v) & = & \int_X \left\{
    \tmop{Tr}_{_{\mathbbm{R}}} \left[ (v_g^{\ast})^2 \tmop{Ric}^{\ast}_g
    (\Omega) \right] \;+\; \frac{1}{2}\; \big| \nabla^{0, 1}_{g, J}
    \,(v_g^{\ast})_{_J}^{0, 1} \big|_g^2\right\} \Omega 
\\
\\
    & - & \frac{1}{2} \int_X \left\langle 4 \,\overline{\partial}_{_{T_{X, J}}}
    (v_g^{\ast})_{_J}^{1, 0} \,+\,\partial^g_{_{T_{X, J}}} (v_g^{\ast})_{_J}^{0,
    1},\, \partial^g_{_{T_{X, J}}} (v_g^{\ast})_{_J}^{0, 1} \right\rangle_g
    \Omega\,,
  \end{eqnarray*}
  where $(v_g^{\ast})_{_J}^{1, 0}$, $(v_g^{\ast})_{_J}^{0, 1}$
  denote respectively the $J$-linear and $J$-anti-linear components of the endomorphism section
  $v^{\ast}_g$. In particular if $v \in \mathbbm{F}_g$ then hold the identity
  \begin{eqnarray*}
    \nabla_G \,D\, \mathcal{W}_{\Omega}\, (g) \,(v, v) & = & \int_X 
    \tmop{Tr}_{_{\mathbbm{R}}} \left[ (v_g^{\ast})^2 \tmop{Ric}^{\ast}_g
    (\Omega) \right] \Omega
\\
\\
& +& \int_X \left[\,\frac{1}{2}\; \big| \nabla_g (v_g^{\ast})_{_J}^{0, 1} \big|_g^2 \;\,+\;\,
    \big| \nabla^{0, 1}_{g, J} (v_g^{\ast})_{_J}^{0, 1} \big|_g^2\right] \Omega \;. 
  \end{eqnarray*}
\end{theorem}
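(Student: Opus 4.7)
The plan is to start from the general second variation formula~(\ref{rm-2vr-W}) of Theorem~\ref{lm-IIvr-W}, applied at $t=0$ along the K\"ahler family $(g_t,J_t)$, and to rewrite each of its three integrands in terms of the $J$-type decomposition $v_g^{\ast}=A+B$, where $A\assign (v_g^{\ast})^{1,0}_J$ is the $J$-linear component and $B\assign (v_g^{\ast})^{0,1}_J$ is the $J$-anti-linear one. Since $\nabla_g J=0$, the sub-bundles of $J$-linear and $J$-anti-linear endomorphisms of $T_X$ are parallel, so $\nabla_g v_g^{\ast}$ splits without mixing as $\nabla A+\nabla B$, and each summand further decomposes by type into $\nabla^{1,0}+\nabla^{0,1}$ pieces.

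The Ricci term $\int_X\langle v\cdot\tmop{Ric}^{\ast}_g(\Omega),v\rangle_g\,\Omega$ becomes $\int_X\tmop{Tr}_{_{\mathbbm{R}}}[(v_g^{\ast})^2\,\tmop{Ric}^{\ast}_g(\Omega)]\,\Omega$ by cyclicity of the trace. The substantial work is the rearrangement of the derivative part $\tfrac{1}{6}|\hat\nabla_g v|^2_g-|\nabla_g v|^2_g$. My plan is to expand both $\hat\nabla v$ and $\nabla v$ using the type decomposition of $v_g^{\ast}$ and to invoke the detailed study of variations of K\"ahler structures carried out in Section~\ref{Prop-vr-KStr}, which exploits the symmetry constraint $\dot J_t=(\dot J_t)^T_{g_t}$ together with the infinitesimal K\"ahler condition on $(g_t,J_t)$. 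Those combined constraints should identify the $(0,2)$-type part of $\nabla A$ with a multiple of $\bar\partial_{T_{X,J}}A$ and, up to terms depending on $B$, the $(2,0)$-type part of $\nabla B$ with $\partial^g_{T_{X,J}}B$; this is what will produce the asymmetric coefficient $4$ in front of $\bar\partial_{T_{X,J}}A$ in the final formula.

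After integration by parts against $\Omega$ the raw norm expression should assemble into the pure Dolbeault piece $\tfrac12|\nabla^{0,1}_{g,J}B|^2_g$ plus the mixed bilinear $-\tfrac12\langle 4\,\bar\partial_{T_{X,J}}A+\partial^g_{T_{X,J}}B,\,\partial^g_{T_{X,J}}B\rangle_g$ of the statement; any Weitzenb\"ock curvature produced by the $T_X$-valued K\"ahler identities should cancel against the off-diagonal cross pieces $\tmop{Tr}[(AB+BA)\,\tmop{Ric}^{\ast}_g(\Omega)]$, leaving only the diagonal trace as written. For the ``in particular'' claim, the hypothesis $v\in\mathbbm{F}_g$ says that $\nabla v_g^{\ast}$ is totally symmetric as a $(1,2)$-tensor, i.e.\ a Codazzi condition; decomposed by type, this kills the combination $4\,\bar\partial_{T_{X,J}}A+\partial^g_{T_{X,J}}B$ in the second line of the general formula and, after expanding $|\nabla_g B|^2_g=|\nabla^{1,0}_{g,J}B|^2_g+|\nabla^{0,1}_{g,J}B|^2_g$ and using the Codazzi symmetry to recombine the $(1,0)$ part of $\nabla B$, it yields the claimed $\tfrac12|\nabla_g B|^2_g+|\nabla^{0,1}_{g,J}B|^2_g$.

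The main obstacle I anticipate is the bookkeeping: tracking the factor $\tfrac{1}{6}$ coming from the full symmetrization $\hat\nabla$ through the $(1,0)/(0,1)$ decomposition against the $\tfrac12$ appearing in the statement, and pinning down how the symmetry constraint $\dot J_t=(\dot J_t)^T_{g_t}$ forces the asymmetric coefficient $4$ in front of $\bar\partial_{T_{X,J}}A$ rather than a symmetric one. This last point is really the heart of the argument and is the place where the K\"ahler hypothesis and the preparatory material of Section~\ref{Prop-vr-KStr} enter non-trivially.
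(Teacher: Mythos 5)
Your starting point is the paper's: apply (\ref{rm-2vr-W}) at $t=0$, split $v_g^{\ast}$ into its $J$-linear and $J$-anti-linear parts, and feed in the symmetries of Section \ref{Prop-vr-KStr} (the hypothesis $\dot{J}_t=(\dot{J}_t)^T_{g_t}$ together with (\ref{der-J-inv-T}) shows the family solves the ODE of Lemma \ref{Kah-crv}, hence $v\in\mathbbm{D}^J_g$, and Lemma \ref{dec-sm-cx} transfers the symmetry to each type component). But the mechanism you then propose would not produce the stated formula. In the paper nothing is integrated by parts at this stage and no Weitzenb\"ock curvature ever appears: the entire rearrangement of $\frac{1}{6}\,|\hat{\nabla}_g v|^2_g-|\nabla_g v|^2_g$ is pointwise, carried out with the identities of Lemma \ref{rmq-kh-sm} and two local-coordinate computations in $g$-geodesic, $J$-holomorphic coordinates, namely $\langle\hat{\nabla}_g v'_{_J},\hat{\nabla}_g v''_{_J}\rangle_g=6\,\langle\partial^g_{_{T_{X,J}}}A,\overline{\partial}_{_{T_{X,J}}}B\rangle_g$ (identity (\ref{orth-der})) and $|\nabla^{1,0}_{g,J}A|^2_g=|\partial^g_{_{T_{X,J}}}A|^2_g$ (identity (\ref{der-A-del-A})); the coefficient $4$ is forced by the arithmetic $\frac{1}{3}\cdot 6=2$ in the cross term against the leftover $-\frac{1}{2}|\partial^g_{_{T_{X,J}}}A|^2_g$, not by identifying type-components of $\nabla A$ or $\nabla B$ with $\overline{\partial}A$ or $\partial^g B$. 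Most importantly, your plan to have Weitzenb\"ock terms cancel the off-diagonal Ricci trace $\tmop{Tr}_{_{\mathbbm{R}}}[(AB+BA)\tmop{Ric}^{\ast}_g(\Omega)]$ and leave ``only the diagonal trace'' contradicts the statement itself: the formula retains the full $\tmop{Tr}_{_{\mathbbm{R}}}[(v_g^{\ast})^2\tmop{Ric}^{\ast}_g(\Omega)]$, and since $\tmop{Ric}_g(\Omega)$ has a non-trivial $J$-anti-invariant part for general $\Omega$ (see (\ref{cx-dec-Ric})), that cross piece does not vanish and nothing in the derivative terms is available to cancel it; the Ricci term is simply left untouched.

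The ``in particular'' step is also wrong as sketched. For $v\in\mathbbm{F}_g$ the total symmetry of $\nabla_g v$ does not kill $4\,\overline{\partial}_{_{T_{X,J}}}(v_g^{\ast})^{1,0}_{_J}+\partial^g_{_{T_{X,J}}}(v_g^{\ast})^{0,1}_{_J}$: by (\ref{Kah-F}) it only gives $\overline{\partial}_{_{T_{X,J}}}(v_g^{\ast})^{1,0}_{_J}=-\,\partial^g_{_{T_{X,J}}}(v_g^{\ast})^{0,1}_{_J}$, so the combination equals $-3\,\partial^g_{_{T_{X,J}}}(v_g^{\ast})^{0,1}_{_J}$ and the second line contributes $+\frac{3}{2}\,|\partial^g_{_{T_{X,J}}}(v_g^{\ast})^{0,1}_{_J}|^2_g$. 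Had the combination vanished, the first formula would collapse to $\tmop{Tr}_{_{\mathbbm{R}}}[(v_g^{\ast})^2\tmop{Ric}^{\ast}_g(\Omega)]+\frac{1}{2}|\nabla^{0,1}_{g,J}(v_g^{\ast})^{0,1}_{_J}|^2_g$, which is strictly smaller than the claimed expression whenever $\nabla_g (v_g^{\ast})^{0,1}_{_J}\neq 0$, so no ``recombination'' can repair it. The actual passage to the second formula still requires (\ref{der-A-del-A}) and the norm identities coming from (\ref{sup-Kh-sm}), (\ref{Kh-sm-B1}) (equivalently, the comparison with (\ref{part-vr-W}) made after the proof); these are exactly the steps your sketch leaves out.
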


We observe that the variations of K\"ahler structures such that $\dot{J}_t = (
\dot{J}_t)_{g_t}^T$ are quite natural in K\"ahler geometry (see \cite{Don} for example). Moreover from the point
of view of applications is seems to be useless to consider more general type
of variations. It is easy to see that for this type of variations hold the
identity $J \dot{J}_0 = (v_g^{\ast})_{_J}^{0, 1}$. (See the section
\ref{Prop-vr-KStr}.) This shows that the main therm in the second variation expression depends 
strongly on the variation of the complex structure.
We point out also that the $\mathbbm{F}_g$-valued variations of K\"ahler
structures enjoy the following remarkable property.

\begin{proposition}
  Let $(g_t)_{t \geqslant 0}$ be a smooth family of Riemannian metrics such
  that $\dot{g}_t \in \mathbbm{F}_{g_t}$ for all $t \geqslant 0$ and let
  $(J_t)_{t \geqslant 0}$ be a family of endomorphism sections of $T_X$ solution of
  the $\tmop{ODE}$
  \[ 2 \;\dot{J}_t \;\;=\;\; J_t  \,\dot{g}_t^{\ast} \;\, - \;\,
     \dot{g}_t^{\ast} J_t\;, \]
  with K\"ahler initial data $(J_0, g_0)$. Then $(J_t, g_t)_{t \geqslant 0}$
  is a smooth family of K\"ahler structures.
\end{proposition}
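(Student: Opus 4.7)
The plan is to establish, at each time $t$, the three defining properties of a K\"ahler structure for $(g_t, J_t)$: (i)~$J_t^2 = -\mathrm{Id}$, (ii)~$g_t$ is $J_t$-invariant, and (iii)~$\nabla^{g_t} J_t = 0$. In each case I will exhibit the obstruction as the solution of a pointwise linear ODE with vanishing initial value; uniqueness then forces it to vanish for all $t$. The ingredients are essentially only the prescribed ODE $2\dot J_t = [J_t, \dot{g}_t^{\ast}]$, the $g_t$-symmetry of $\dot{g}_t^{\ast}$, and, in step (iii), the hypothesis $\dot g_t \in \mathbbm{F}_{g_t}$.

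For (i), I would expand $\tfrac{d}{dt}(J_t^2) = \dot J_t J_t + J_t \dot J_t$ via the ODE; the cross terms telescope to give $\tfrac{d}{dt}(J_t^2) = \tfrac{1}{2}[J_t^2, \dot{g}_t^{\ast}]$. Hence $A_t := J_t^2 + \mathrm{Id}$ satisfies the pointwise linear ODE $\dot A_t = \tfrac{1}{2}[A_t, \dot{g}_t^{\ast}]$, and $A_0 = 0$ yields $J_t^2 \equiv -\mathrm{Id}$. For (ii), I would introduce the symmetric bilinear form $\Sigma_{t,ij} := g_t(J_tX, Y) + g_t(X, J_tY)$; substituting the ODE for $J_t$, using that $\dot{g}_t^{\ast}$ is $g_t$-symmetric, and rewriting $g_{kj} J^k_l = \Sigma_{lj} - g_{lk} J^k_j$, the source terms cancel and one is left with $\dot \Sigma_{t,ij} = \tfrac{1}{2}[\Sigma_{t,lj}(\dot{g}_t^{\ast})^l_i + \Sigma_{t,li}(\dot{g}_t^{\ast})^l_j]$, linear in $\Sigma_t$. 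Since $\Sigma_0 = 0$, one concludes $\Sigma_t \equiv 0$.

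Step (iii) is the main point. The condition $\nabla_{T_X, g_t}(\dot{g}_t^{\ast}) = 0$ says $\nabla^{g_t}(\dot{g}_t^{\ast})^i_j$ is symmetric in its lower indices; equivalently $\nabla^{g_t}\dot g_t$ is fully symmetric as a $(0,3)$-tensor. Plugging this total symmetry into the Koszul variation formula collapses the three standard terms to a single one,
\[
\dot\Gamma^{i}_{t, kj} \;=\; \tfrac{1}{2}\, \nabla^{g_t}_k (\dot{g}_t^{\ast})^i_j.
\]
Setting $\Psi_t := \nabla^{g_t} J_t$ and differentiating, the expression $\dot\Psi^{i}_{jk} = \nabla^{g_t}_k \dot J^i_j + \dot\Gamma^{i}_{kl} J^l_j - \dot\Gamma^{l}_{kj} J^i_l$, after substituting the ODE for $J_t$ and the above formula for $\dot\Gamma$, reduces to the pointwise linear ODE
\[
\dot\Psi^{i}_{jk} \;=\; \tfrac{1}{2}\bigl[\Psi^{i}_{lk}(\dot{g}_t^{\ast})^l_j \;-\; (\dot{g}_t^{\ast})^i_l\, \Psi^{l}_{jk}\bigr],
\]
the four source terms involving $\nabla^{g_t}(\dot{g}_t^{\ast})$ having cancelled exactly in pairs. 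K\"ahlerness of the initial data gives $\Psi_0 = 0$, hence $\nabla^{g_t} J_t \equiv 0$ throughout; combined with (i)--(ii) this is the K\"ahler condition for $(g_t, J_t)$.

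The hard part is precisely this cancellation in step (iii): it works because the total symmetry of $\nabla\dot g_t$ matches exactly the index pattern produced by the variation of the Christoffel symbols, so the inhomogeneous terms in $\dot\Psi$ annihilate one another. Without the hypothesis $\dot g_t \in \mathbbm{F}_{g_t}$ one would be left with residual source terms and the linear propagation argument would break down.
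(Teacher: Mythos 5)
Your argument is correct, but your treatment of the key step differs from the paper's. Steps (i) and (ii) coincide with the paper's preliminary lemma: there one also finds $\tfrac{d}{dt}J_t^2=\tfrac12\,[J_t^2,\dot g_t^{\ast}]$ and the evolution $2\dot M_t=M_t\,g_t^{-1}\dot g_t+\dot g_t\,g_t^{-1}M_t$ for $M_t:=g_tJ_t+J_t^{\ast}g_t$, with the same uniqueness conclusion. For step (iii), however, the paper goes through Lemma \ref{Kah-crv}: for an \emph{arbitrary} family of metrics it computes $2\tfrac{d}{dt}\nabla_{g_t}J_t$ as a commutator term in $\nabla_{g_t}J_t$ plus an explicit source $T_t$ expressed through $\nabla^{0,1}_{g_t,J_t}\dot g_t^{\ast}$, obtaining the sharper statement that $(g_t,J_t)$ stays K\"ahler if and only if $\dot g_t\in\mathbbm{D}^{J_t}_{g_t}$; the proposition then follows (Corollary \ref{F-Kah}) because the total symmetry of $\nabla_{g_t}\dot g_t$, i.e.\ $\dot g_t\in\mathbbm{F}_{g_t}$, forces the symmetry condition defining $\mathbbm{D}^{J_t}_{g_t}$. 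You instead inject the total symmetry at the earliest possible point, collapsing the Koszul variation to $\dot\Gamma=\tfrac12\,\nabla_{g_t}\dot g_t^{\ast}$, after which the inhomogeneous terms in $\tfrac{d}{dt}\nabla_{g_t}J_t$ cancel exactly (I verified the cancellation: the four source terms annihilate in pairs as you claim), leaving a homogeneous fiberwise linear ODE for $\Psi_t=\nabla_{g_t}J_t$ with $\Psi_0=0$. Your route is shorter and more elementary for the proposition as stated; what it does not provide is the paper's iff-characterization in terms of $\mathbbm{D}^{J_t}_{g_t}$, which is the key result of that section and is reused, via the symmetry identities of Lemma \ref{rmq-kh-sm}, in the proof of the main theorem. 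Two minor points worth making explicit in a write-up: the ODEs for $A_t$, $\Sigma_t$, $\Psi_t$ are fiberwise linear with smooth time-dependent coefficients, so uniqueness is the finite-dimensional Cauchy--Lipschitz theorem applied pointwise (the same argument the paper invokes); and the three properties you verify are exactly the paper's definition of a K\"ahler structure, integrability of $J_t$ being a consequence of $\nabla_{g_t}J_t=0$.
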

Relevant applications of this fact will be presented in a forthcoming work. We
expect that our main result will imply quite sharp stability statements. We
postpone this considerations in a forthcoming study.

\section{The first variation of the $\Omega$-Bakry-Emery-Ricci tensor}

Let $\Omega > 0$ be a smooth volume form over an oriented Riemannian manifold
$(X, g)$. We remind that the $\Omega$-Bakry-Emery-Ricci tensor of $g$ is
defined by the formula
\[ \tmop{Ric}_g (\Omega) \hspace{0.75em} : = \hspace{0.75em} \tmop{Ric} (g)
   \hspace{0.75em} + \hspace{0.75em} \nabla_g \,d\, \log \frac{dV_g}{\Omega} \;. \]
A Riemannian metric $g$ is called a $\Omega$-Shrinking Ricci soliton if $g = \tmop{Ric}_g (\Omega)$. We define the the
symmetrization of $\nabla_g$ acting on
symmetric tensors as the operator
\[ \hat{\nabla}_g : C^{\infty} \left( S^p T^{\ast}_M \right) \longrightarrow
   C^{\infty} \left( S^{p + 1} T^{\ast}_M \right)  \hspace{0.25em}, \]
\begin{eqnarray*}
  \hat{\nabla}_g  \hspace{0.25em} \alpha \,(\xi_0, ..., \xi_p) & = & \sum_{j =
  0}^p \nabla_g \,\alpha \,(\xi_j, \xi_0, ..., \hat{\xi}_j, ..., \xi_p) \;.
\end{eqnarray*}
We observe in fact that
\[ \nabla_g : C^{\infty} \left( S^p T^{\ast}_M \right) \longrightarrow
   C^{\infty} \left( T^{\ast}_M \otimes_{_{\mathbbm{R}}} S^p T^{\ast}_M
   \right) \hspace{0.25em} . \]
With this notations hold the following variation result.
\begin{lemma}
  \label{var-Om-Ric}Let $(g_t )_t$ be a smooth family of Riemannian
  metrics over an orientable manifold oriented by a smooth volume form $\Omega > 0$. Then
  hold the first variation formula
  \[ 2 \,\frac{d}{d t} \tmop{Ric}_{g_t} (\Omega) \;\;=\;\; e^{f_t} \tmop{div}_{g_t}
     \left( e^{- f_t} \mathcal{D}_{g_t}  \dot{g}_t \right), \]
  where $f_t \;\assign\; \log \frac{d V_{g_t}}{\Omega}$ and $\mathcal{D}_g \;\assign\;
  \hat{\nabla}_g \;-\; 2 \,\nabla_g$.
\end{lemma}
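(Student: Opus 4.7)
The idea is to split $\tmop{Ric}_{g_t}(\Omega) = \tmop{Ric}(g_t) + \nabla_{g_t} d f_t$, differentiate each summand separately, and recognize the sum as a weighted divergence of $\mathcal{D}_{g_t} \dot{g}_t$. Throughout write $h \assign \dot{g}_t$ and $f \assign f_t$. Since $\Omega$ does not depend on $t$, differentiating $f_t = \log(dV_{g_t}/\Omega)$ immediately gives $2 \dot{f}_t = \tmop{Tr}_{g_t}(h)$.

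Next I would use the standard variation $2 \dot{\Gamma}^k_{ij} = g^{kl}(\nabla_i h_{jl} + \nabla_j h_{il} - \nabla_l h_{ij})$ of the Christoffel symbols together with the Palatini identity $\dot{R}_{jk} = \nabla_i \dot{\Gamma}^i_{jk} - \nabla_j \dot{\Gamma}^i_{ik}$ to recover the classical first variation of Ricci
\[
2\dot{R}_{jk} \;=\; \nabla^i \nabla_j h_{ik} \,+\, \nabla^i \nabla_k h_{ij} \,-\, \Delta_g h_{jk} \,-\, \nabla_j \nabla_k \tmop{Tr}_g(h) \;.
\]
In parallel, differentiating the coordinate expression $(\nabla_{g_t} d f_t)_{jk} = \partial_j \partial_k f_t - \Gamma^l_{jk}(t)\,\partial_l f_t$ and substituting the formulas for $\dot{f}_t$ and $\dot{\Gamma}$ produces
\[
2\frac{d}{dt}(\nabla_{g_t} d f_t)_{jk} \;=\; \nabla_j \nabla_k \tmop{Tr}_g(h) \,-\, (\nabla^l f)\bigl(\nabla_j h_{kl} + \nabla_k h_{jl} - \nabla_l h_{jk}\bigr) \;.
\]

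Adding these two identities the $\nabla_j \nabla_k \tmop{Tr}_g(h)$ contributions cancel. The remaining expression is reinterpreted by noting that the definitions of $\hat{\nabla}_g$ and $\nabla_g$ on symmetric $2$-tensors yield $(\mathcal{D}_g h)_{ijk} = -\nabla_i h_{jk} + \nabla_j h_{ik} + \nabla_k h_{ij}$: the second-order terms regroup into $\nabla^i (\mathcal{D}_g h)_{ijk}$, and the three first-order $\nabla f$ terms combine into $-(\nabla^i f)(\mathcal{D}_g h)_{ijk}$. The sum of these two pieces is exactly the expansion of $e^f \tmop{div}_g\bigl(e^{-f} \mathcal{D}_g h\bigr)_{jk}$, which is the desired formula.

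The only real difficulty is careful index bookkeeping. The conceptual content lies in two observations: first, the $\nabla\nabla\tmop{Tr}(h)$ term contributed by $\dot{R}$ is cancelled precisely by the one contributed by $\tfrac{d}{dt}\nabla d f$ through the relation $2\dot{f} = \tmop{Tr}(h)$; and second, the three first-order $\nabla f$ terms produced by $\dot{\Gamma}^l_{jk}\partial_l f$ are exactly the $\nabla f$-corrections demanded by pulling $e^{-f}$ through the divergence, which is what causes the weighted divergence to emerge. No direct curvature manipulations are needed, because commutators of covariant derivatives never have to be rearranged.
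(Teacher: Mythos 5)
Your proposal is correct and follows essentially the same route as the paper: the same splitting $\tmop{Ric}_{g_t}(\Omega)=\tmop{Ric}(g_t)+\nabla_{g_t}d f_t$, the connection variation plus the Palatini-type identity for $\dot{\tmop{Ric}}$, the relation $2\dot f_t=\tmop{Tr}_{g_t}\dot g_t$ cancelling the $\nabla\nabla\tmop{Tr}$ terms, and the regrouping of the remaining second-order and $\nabla f$ terms into $e^{f}\tmop{div}_g(e^{-f}\mathcal{D}_g\dot g)$. The only difference is that you work in index notation while the paper packages the Christoffel variation as $2\,g(\dot\nabla(\xi,\eta),\mu)=\mathcal{D}_g\dot g\,(\mu,\xi,\eta)$ from the outset, which is purely a matter of bookkeeping.
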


\begin{proof}
  A very large part of the proof is taken from a standard computation in \cite{Bes}. We include it here for
readers convenience.
We remind (see \cite{Bes}) that the first variation $\dot{\nabla}_{g_t}$ of the connection is given by the formula
  \begin{equation}
    \label{Bess-var-LC-conn} 2 \,g_t \left( \dot{\nabla}_{g_t} (\xi, \eta), \mu
    \right) = \nabla_{g_t} \dot{g}_t (\xi, \eta, \mu) \hspace{0.75em} +
    \hspace{0.75em} \nabla_{g_t} \dot{g}_t (\eta, \xi, \mu) \hspace{0.75em} -
    \hspace{0.75em} \nabla_{g_t} \dot{g}_t (\mu, \xi, \eta) \hspace{0.25em} .
  \end{equation}
  This rewrites as
  \begin{eqnarray*}
    2 \,g_t \left( \dot{\nabla}_{g_t} (\xi, \eta), \mu \right) & = &
    \hat{\nabla}_{g_t} \dot{g}_t (\xi, \eta, \mu) \hspace{0.75em} -
    \hspace{0.75em} 2 \,\nabla_{g_t} \dot{g}_t (\mu, \xi, \eta)\\
    &  & \\
    & = & \hat{\nabla}_{g_t} \dot{g}_t (\mu, \xi, \eta) \hspace{0.75em} -
    \hspace{0.75em} 2 \,\nabla_{g_t} \dot{g}_t (\mu, \xi, \eta) \hspace{0.25em},
  \end{eqnarray*}
  thanks to the symmetry properties of $\hat{\nabla}_{g_t}\, \dot{g}_t$. We infer the identity
  \begin{equation}
    \label{var-LC-conn} 2 \,g_t \left( \dot{\nabla}_{g_t} (\xi, \eta), \mu
    \right) \hspace{0.75em} = \hspace{0.75em} \mathcal{D}_{g_t} \dot{g}_t
    \hspace{0.25em} (\mu, \xi, \eta) \;.
  \end{equation}
  Taking a covariant derivative of this identity we obtain
  \begin{equation}
    \label{Der-var-LC-conn} 2\, g_t ( \nabla_{g_t} \dot{\nabla}_{g_t} (\zeta,
    \xi, \eta), \mu) = \nabla_{g_t} \mathcal{D}_{g_t}  \dot{g}_t (\zeta, \mu,
    \xi, \eta) .
  \end{equation}
  We observe now
  \[ \left( \frac{d}{dt} \hspace{0.25em} \tmop{Ric} (g_t) \right) (\xi, \eta)
     \hspace{0.75em} = \hspace{0.75em} \tmop{Tr}_{_{\mathbbm{R}}} \left[
     \dot{\mathcal{R}}_{g_t} (\cdot, \xi) \eta \right] \hspace{0.75em} =
     \hspace{0.75em} \sum_{j = 1}^n g \left( \dot{\mathcal{R}}_{g_t} (e_j,
     \xi) \eta, e_j \right) \hspace{0.25em}, \]
  for any $g_t$-orthonormal frame $(e_j)_j$. This combined with the variation
  identity
  \begin{equation}
    \label{var-Rm}  \dot{\mathcal{R}}_{g_t} (\xi, \eta) \mu \hspace{0.75em} =
    \hspace{0.75em} \nabla_{g_t} \dot{\nabla}_{g_t} (\xi, \eta, \mu)
    \hspace{0.75em} - \hspace{0.75em} \nabla_{g_t} \dot{\nabla}_{g_t} (\eta,
    \xi, \mu) \hspace{0.25em},
  \end{equation}
  (see \cite{Bes}) implies the expression
  \begin{eqnarray*}
    2 \,g_t \left( \dot{\mathcal{R}}_{g_t} (e_j, \xi) \eta, e_j \right) & = & 2\,
    g_t \left( \nabla_{g_t} \dot{\nabla}_{g_t} (e_j, \xi, \eta), e_j \right)
    \\
\\
& -& 2\, g_t \left( \nabla_{g_t}
    \dot{\nabla}_{g_t} (\xi, e_j, \eta), e_j \right)\\
    &  & \\
    & = & \nabla_{g_t}  \mathcal{D}_{g_t} \dot{g}_t (e_j, e_j, \xi, \eta)
    \hspace{0.75em} - \hspace{0.75em} \nabla_{g_t}  \mathcal{D}_{g_t}
    \dot{g}_t (\xi, e_j, e_j, \eta) \hspace{0.25em},
  \end{eqnarray*}
  thanks to the identity (\ref{Der-var-LC-conn}). Let $(x_1, ..., x_n)$ be
  $g_t$-geodesic coordinates centered at an arbitrary point $p$ and set $e_k :
  = \frac{\partial}{\partial x_k}$. Then the local frame $(e_k)_k$ is $g_t
  (p)$-orthonormal at the point $p$ and satisfies $\nabla_{g_t} e_j (p) = 0$
  for all $j$. We take now the vector fields $\xi$ and $\eta$ with constant
  coefficients with respect to the $g_t$-geodesic coordinates $(x_1, ...,
  x_n)$. Therefore $\nabla_{g_t} \xi (p) = \nabla_{g_t} \eta (p) = 0$. We
  infer the identity at the space time point $(p,t)$
  \begin{eqnarray*}
    \nabla_{g_t}  \mathcal{D}_{g_t}  \dot{g}_t (\xi, e_j, e_j, \eta) & = &
    \nabla_{g_t, \xi} \hspace{0.25em} \left[ \hspace{0.25em} \mathcal{D}_{g_t}
    \dot{g}_t (e_j, g_t^{- 1} e^{\ast}_j, \eta) \right]\\
    &  & \\
    & = & \nabla_{g_t, \xi} \hspace{0.25em} \left[ \left( \tmop{Tr}_{g_t}
    \hspace{0.25em} \mathcal{D}_{g_t}  \dot{g}_t \right) (\eta) \right] .
  \end{eqnarray*}
  We observe indeed the trivial identities
  \begin{eqnarray*}
    \tmop{Tr}_{g_t}  \dot{g}_t & = & \tmop{Tr}_{_{\mathbbm{R}}} \left( g_t^{-
    1} \dot{g}_t \right)\\
    &  & \\
    & = & \sum_{j = 1}^n e^{\ast}_j \left( g_t^{- 1} \dot{g}_t \right) e_j\\
    &  & \\
    & = & \sum_{j = 1}^n g_t \left( g_t^{- 1} e^{\ast}_j, \left( g_t^{- 1}
    \dot{g}_t \right) e_j \right)\\
    &  & \\
    & = & \sum_{j = 1}^n \dot{g}_t (e_j, g_t^{- 1} e_j^{\ast})
    \hspace{0.25em} .
  \end{eqnarray*}
  Deriving this last identity we get the formula $\eta . \tmop{Tr}_{g_t} 
  \dot{g}_t = \tmop{Tr}_g \nabla_{g_t, \eta}  \dot{g}_t$. Moreover the
  identity (\ref{var-LC-conn}) combined with (\ref{Bess-var-LC-conn}) gives
  \begin{eqnarray*}
    \mathcal{D}_{g_t}  \dot{g}_t (e_j, e_j, \eta) & = & \nabla_{g_t} \dot{g}_t
    (e_j, e_j, \eta) \hspace{0.75em} + \hspace{0.75em} \nabla_{g_t} \dot{g}_t
    (\eta, e_j, e_j) \hspace{0.75em} - \hspace{0.75em} \nabla_{g_t} \dot{g}_t
    (e_j, e_j, \eta)\\
    &  & \\
    & = & \nabla_{g_t} \dot{g}_t (\eta, e_j, e_j) .
  \end{eqnarray*}
  We deduce the identity $\left( \tmop{Tr}_{g_t} 
  \mathcal{D}_{g_t} \dot{g}_t \right) (\eta) \;=\; \tmop{Tr}_g \nabla_{g_t, \eta}
  \,\dot{g}_t$ and thus 
$$
\tmop{Tr}_{g_t}  \mathcal{D}_{g_t} 
  \dot{g}_t \;\;=\;\; d \tmop{Tr}_{g_t}  \dot{g}_t\;.
$$
We obtain the variation formula
  \begin{equation}
    \label{var-Ric} 2 \,\frac{d}{d t} \,\tmop{Ric} (g_t) \;\;=\;\; \tmop{div}_{g_t}
    \mathcal{D}_{g_t}^{}  \dot{g}_t \;\,-\;\, \nabla_{g_t} d \tmop{Tr}_{g_t} 
    \dot{g}_t\; .
  \end{equation}
  We observe now the identity
  \[ 2 \,  \frac{d}{dt} \, \nabla_{g_t} d\,f_t
     \hspace{0.75em} = \hspace{0.75em} 2 \hspace{0.25em} \dot{\nabla}_{g_t} d
     \,f_t \;\, + \;\, 2 \, \nabla_{g_t} d
     \,\dot{f}_t \;.  \]
  Time deriving the definition of the dual connection we obtain the identity
  \[ ( \dot{\nabla}_{g_t, \,\xi} \alpha) \cdot \eta \;\;=\;\; -\;\, \alpha \cdot
     \dot{\nabla}_{g_t, \,\xi} \hspace{0.25em} \eta\;, \]
  for any $1$-form $\alpha$. By applying this identity to $\alpha = d\,f_t$ we
  infer
  \begin{eqnarray*}
    2 \, \dot{\nabla}_{g_t} d\,f_t (\xi, \eta) & = & -
   \;\, 2 \, d\,f_t \cdot \dot{\nabla}_{g_t} (\xi,
    \eta) 
\\
\\
& =&  - \;\, 2\, g_t \left(
    \dot{\nabla}_{g_t} (\xi, \eta), \nabla_{g_t} f_t \right)\\
    &  & \\
    & = & - \hspace{0.25em} \mathcal{D}_{g_t}  \dot{g}_t (\nabla_{g_t} f_t,
    \xi, \eta) \hspace{0.25em},
  \end{eqnarray*}
  i.e. $2\, \dot{\nabla}_{g_t} d\,f_t \;=\; -\, \nabla_{g_t} f_t \,\neg\,
  \mathcal{D}_{g_t}  \dot{g}_t$, where $\neg$ denotes the contraction operation. Thus hold the formula
  \[ 2 \, \frac{d}{dt} \, \nabla_{g_t} d\,f_t
     \;\; = \;\; - \;\,\nabla_{g_t} f_t \;\neg\;
     \mathcal{D}_{g_t} \dot{g}_t \;\, + \;\,
     \nabla_{g_t} d \tmop{Tr}_{g_t} \dot{g}_t \;, \]
  since $2 \,\dot{f}_t \;=\; \tmop{Tr}_{g_t}  \dot{g}_t$. Using the identity
  (\ref{var-Ric}) we obtain
  \begin{eqnarray*}
    2 \, \frac{d}{dt} \,\tmop{Ric}_{g_t} (\Omega) &
    = & \tmop{div}_{g_t}  \mathcal{D}_{g_t}  \dot{g}_t \hspace{0.75em} -
    \hspace{0.75em} \nabla_{g_t} f_t \;\neg\; \mathcal{D}_{g_t}  \dot{g}_t\\
    &  & \\
    & = & \tmop{Tr}_{g_t} \left( \nabla_{g_t}  \mathcal{D}_{g_t}  \dot{g}_t
    \hspace{0.75em} - \hspace{0.75em} d\,f_t \otimes \mathcal{D}_{g_t} 
    \dot{g}_t \right)\\
    &  & \\
    & = & e^{f_t} \tmop{Tr}_{g_t} \left( e^{- f_t} \nabla_{g_t} 
    \mathcal{D}_{g_t}  \dot{g}_t \hspace{0.75em} + \hspace{0.75em} d\,e^{- f_t}
    \otimes \mathcal{D}_{g_t}  \dot{g}_t \right)\\
    &  & \\
    & = & e^{f_t} \tmop{Tr}_{g_t} \nabla_{g_t} \left( e^{- f_t} 
    \mathcal{D}_{g_t}  \dot{g}_t \right) \;.
  \end{eqnarray*}
  This concludes the proof of the lemma. 
\end{proof}

We denote by $P_g^{\ast}$ the formal adjoint of an operator $P$ with respect
to a metric $g$. We observe that the operator
\begin{eqnarray*}
  P^{\ast_{_{\Omega}}}_g & : = & e^f P^{\ast}_g  \left( e^{- f} \bullet
  \right),
\end{eqnarray*}
is the formal adjoint of $P$ with respect to the scalar product $\int_X
\left\langle \cdot, \cdot \right\rangle_g \Omega .$ With this notations hold
the following corollary

\begin{corollary}
  \label{vr-ORc-adj}Let $X$ be an orientable manifold oriented by a smooth
  volume form $\Omega > 0$ and let $(g_t )_t$ be a smooth family of
  Riemannian metrics. Then hold the first variation formula
  \[  \frac{d}{d t} \,\tmop{Ric}_{g_t} (\Omega) \;\;=\;\;
     \nabla^{\ast_{_{\Omega}}}_{g_t} \nabla_{g_t}  \dot{g}_t \;\,-\;\, \frac{1}{6} \,
     \hat{\nabla}^{\ast_{_{\Omega}}}_{g_t}  \hat{\nabla}_{g_t}  \dot{g}_t \;. \]
\end{corollary}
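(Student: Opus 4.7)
The plan is to begin with the formula from Lemma \ref{var-Om-Ric} and expand $\mathcal{D}_{g_t} = \hat{\nabla}_{g_t} - 2\,\nabla_{g_t}$, so that
\[
  2\,\frac{d}{dt}\,\tmop{Ric}_{g_t}(\Omega) \;=\; e^{f_t}\tmop{div}_{g_t}\!\left(e^{-f_t}\hat{\nabla}_{g_t}\dot{g}_t\right) \;-\; 2\,e^{f_t}\tmop{div}_{g_t}\!\left(e^{-f_t}\nabla_{g_t}\dot{g}_t\right).
\]
The task then reduces to rewriting each of the two weighted divergences on the right as a second-order operator built from the $\Omega$-adjoints, namely $\nabla^{\ast_{_{\Omega}}}_{g_t}\nabla_{g_t}\dot{g}_t$ and $\hat{\nabla}^{\ast_{_{\Omega}}}_{g_t}\hat{\nabla}_{g_t}\dot{g}_t$, with the correct combinatorial constants.

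First I would record the identity $e^{f}\tmop{div}_g(e^{-f}A) = -\,\nabla^{\ast_{_{\Omega}}}_g A$, valid for any $T^{\ast}_M\otimes E$-valued tensor $A$, where the divergence contracts on the distinguished $T^{\ast}_M$-factor. This is a one-line integration by parts against the measure $\Omega = e^{-f}dV_g$. Specialising to $A = \nabla_{g_t}\dot{g}_t$, it immediately converts the second term above into $2\,\nabla^{\ast_{_{\Omega}}}_{g_t}\nabla_{g_t}\dot{g}_t$.

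The main step is to express the $\hat{\nabla}$-divergence in terms of $\hat{\nabla}^{\ast_{_{\Omega}}}\hat{\nabla}$ rather than the naive $\nabla^{\ast_{_{\Omega}}}\hat{\nabla}$. For this I would use the identity $\hat{\nabla}_g = (p+1)\,\tmop{Sym}\circ\nabla_g$ on $C^{\infty}(S^pT^{\ast}_M)$, where $\tmop{Sym}:T^{\ast}_M\otimes S^pT^{\ast}_M\to S^{p+1}T^{\ast}_M$ is the orthogonal projection onto the symmetric part; it agrees with the restriction of the full symmetrization on $(T^{\ast}_M)^{\otimes(p+1)}$, because the $p!$ permutations of the last $p$ slots act trivially on $T^{\ast}_M\otimes S^pT^{\ast}_M$. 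Self-adjointness of $\tmop{Sym}$, together with the fact that it fixes $S^{p+1}T^{\ast}_M$, yields for every $\alpha\in C^{\infty}(S^{p+1}T^{\ast}_M)$
\[
  \int_X\langle\hat{\nabla}_g h,\alpha\rangle_g\,\Omega \;=\; (p+1)\int_X\langle\nabla_g h,\alpha\rangle_g\,\Omega ,
\]
and therefore $\hat{\nabla}^{\ast_{_{\Omega}}}_g\alpha = (p+1)\,\nabla^{\ast_{_{\Omega}}}_g\alpha$ on fully symmetric tensors. Applied with $p=2$ and $\alpha = \hat{\nabla}_{g_t}\dot{g}_t\in C^{\infty}(S^3T^{\ast}_M)$, this gives $e^{f_t}\tmop{div}_{g_t}(e^{-f_t}\hat{\nabla}_{g_t}\dot{g}_t) = -\tfrac{1}{3}\hat{\nabla}^{\ast_{_{\Omega}}}_{g_t}\hat{\nabla}_{g_t}\dot{g}_t$. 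Assembling the two pieces and dividing by $2$ produces the stated formula, with the coefficient $\tfrac{1}{6}$ arising as $\tfrac{1}{2}\cdot\tfrac{1}{3}$. The only delicate point in the argument is tracking this symmetrization factor correctly; once the self-adjointness and idempotency of $\tmop{Sym}$ are in place, the rest is formal.
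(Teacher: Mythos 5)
Your argument is correct and follows essentially the same route as the paper: the paper's proof likewise starts from Lemma \ref{var-Om-Ric}, splits $\mathcal{D}_{g_t}=\hat{\nabla}_{g_t}-2\,\nabla_{g_t}$, uses $\nabla_g^{\ast}=-\tmop{div}_g$ (so that $e^{f}\tmop{div}_g\!\left(e^{-f}\,\bullet\right)=-\,\nabla_g^{\ast_{_{\Omega}}}$), and invokes $\hat{\nabla}^{\ast}_g=3\,\nabla^{\ast}_g$ in restriction to symmetric $3$-tensors, whose $\Omega$-weighted analogue produces the coefficient $\tfrac{1}{6}=\tfrac{1}{2}\cdot\tfrac{1}{3}$ exactly as you assemble it. The only place you deviate is in how that factor-$3$ identity is justified: the paper proves the pointwise identity (\ref{cntr-smfm}) by a local-frame computation with the symmetrized frame $e^{\ast}_K$ in subsection \ref{apdx1}, whereas you derive it from $\hat{\nabla}_g=(p+1)\,\tmop{Sym}\circ\nabla_g$ together with self-adjointness and idempotency of the orthogonal symmetrization projection; both arguments hinge on the paper's convention that the metric on $S^{p+1}T^{\ast}_M$ is the restriction of the tensor metric (no degree factors), and your projection argument is a legitimate, arguably more transparent, substitute for the appendix computation.
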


\begin{proof}
  We remind that $\nabla_g^{\ast} = - \tmop{div}_g$ and that
  $\hat{\nabla}^{\ast}_g = 3 \nabla_g^{\ast}$ in restriction to symmetric
  3-forms. (See the subsection \ref{apdx1} in the appendix). This last
  identity implies $\hat{\nabla}^{\ast_{_{\Omega}}}_{g_t} = 3
  \nabla^{\ast_{_{\Omega}}}_{g_t}$ in restriction to symmetric 3-forms.
\end{proof}

\section{Geodesics in the space of Riemannian metrics}

The content of this section is well known. The presentation is adapted to our particular situation. Moreover
a formula below will be needed in the next section.

{\tmstrong{The differential point of view}}. Let $\mathcal{H} : = L^2 (X,
S^2_{_{\mathbbm{R}}} T_X^{\ast})$. We equip the set $\mathcal{M} \subset
\mathcal{H}$ of smooth Riemannian metrics over $X$ with the Riemannian metric
\begin{equation}
  \label{Glb-Rm-m} G_g (u, v) = \int_X \left\langle \hspace{0.25em} u, v
  \right\rangle_g \Omega\;,
\end{equation}
for all $g \in \mathcal{\mathcal{M}}$ and $u, v \in \mathcal{H}$. We observe
now that any element $u \in T_X^{\ast} \otimes T_X^{\ast}$ can be considered
as a morphism $u : T_X \rightarrow T_X^{\ast}$. Thus we can define the
endomorphism $u^{\ast}_g : = g^{- 1} u$. With this notation hold the identity
\begin{equation}
  \label{prod-Tr}  \left\langle u, v \right\rangle_g \;\;=\;\;
  \tmop{Tr}_{_{\mathbbm{R}}} (u_g^{\ast} \hspace{0.25em} v^{\ast}_g)\;,
\end{equation}
for all $u, v \in S^2_{_{\mathbbm{R}}} T_X^{\ast}$. Using the identity
(\ref{prod-Tr}) we compute now the Fr\'echet derivative
\[ D_g G : \mathcal{H} \times \mathcal{H} \rightarrow \mathcal{H}^{\ast}
   \hspace{0.25em}, \]
of the metric $G$ at a point $g \in \mathcal{M}$. For this purpose let
$(g_s)_{s \in (- \varepsilon, \varepsilon)} \subset \mathcal{M}$ be a smooth
curve and for notation simplicity let denote $u^{\ast}_s : = g^{- 1}_s u$.
Then hold the equalities
\begin{eqnarray*} \frac{d}{ds}  \left\langle \hspace{0.25em} u, v \right\rangle_{g_s} 
&=&
   \frac{d}{ds} \hspace{0.25em} \tmop{Tr}_{_{\mathbbm{R}}}
   (u_s^{\ast} \hspace{0.25em} v^{\ast}_s) 
\\
\\
& =& 
   \tmop{Tr}_{_{\mathbbm{R}}} \left( \frac{d}{ds}  \hspace{0.25em} u_s^{\ast}
   \hspace{0.25em} v^{\ast}_s \hspace{0.75em} + \hspace{0.75em} u_s^{\ast}
   \hspace{0.25em}  \frac{d}{ds} \hspace{0.25em} v^{\ast}_s \right)
   \;, 
\end{eqnarray*}
and $\frac{d}{ds} \hspace{0.25em} u_s^{\ast} = - \dot{g}_s^{\ast}
\hspace{0.25em} u_s^{\ast}$ since $\frac{d}{ds} \hspace{0.25em} g_s^{- 1} = -
g_s^{- 1} \hspace{0.25em} \dot{g}_s \hspace{0.25em} g_s^{- 1}$. 
Thus
\begin{eqnarray*}
\frac{d}{ds}  \left\langle \hspace{0.25em} u, v \right\rangle_{g_s}
   & = & - \;\,
   \tmop{Tr}_{_{\mathbbm{R}}} ( \dot{g}_s^{\ast} \hspace{0.25em} u_s^{\ast}
   \hspace{0.25em} v^{\ast}_s \hspace{0.75em} + \hspace{0.75em} u_s^{\ast}
   \hspace{0.25em}  \dot{g}_s^{\ast} \hspace{0.25em} v_s^{\ast})
   \\
\\
&=&  - \;\, 2\,
   \tmop{Tr}_{_{\mathbbm{R}}} ( \dot{g}_s^{\ast} \hspace{0.25em} u_s^{\ast}
   \hspace{0.25em} v^{\ast}_s) \hspace{0.25em}, 
\end{eqnarray*}
since $\dot{g}_s$ is also symmetric. Indeed we observe the elementary
identities
\begin{eqnarray*}
  \tmop{Tr}_{_{\mathbbm{R}}} \left[ (u_s^{\ast} \hspace{0.25em} 
  \dot{g}_s^{\ast}) v_s^{\ast} \right] & = & \tmop{Tr}_{_{\mathbbm{R}}} \left[
  v_s^{\ast} (u_s^{\ast} \hspace{0.25em}  \dot{g}_s^{\ast}) \right]\\
  &  & \\
  & = & \tmop{Tr}_{_{\mathbbm{R}}} \left[ v_s^{\ast} (u_s^{\ast}
  \hspace{0.25em}  \dot{g}_s^{\ast}) \right]_s^T\\
  &  & \\
  & = & \tmop{Tr}_{_{\mathbbm{R}}} \left[ (u_s^{\ast} \hspace{0.25em} 
  \dot{g}_s^{\ast})_s^T v^{\ast}_s \right]\\
  &  & \\
  & = & \tmop{Tr}_{_{\mathbbm{R}}} ( \dot{g}_s^{\ast} \hspace{0.25em}
  u_s^{\ast} \hspace{0.25em} v^{\ast}_s),
\end{eqnarray*}
where $A_s^T$ denotes the transpose of $A$ with respect to $g_s$. So if we set
$g : = g_0$ and $h : = \dot{g}_0$ we infer the identity
\begin{eqnarray*}
D_g G (h, u) v 
&=&
- \hspace{0.75em} 2
   \hspace{0.25em} \int_X \tmop{Tr}_{_{\mathbbm{R}}} (h_g^{\ast}
   \hspace{0.25em} u_g^{\ast} \hspace{0.25em} v^{\ast}_g) \hspace{0.25em}
   \Omega 
\\
\\
&=& 
- \hspace{0.75em} 2 \hspace{0.25em} \int_X \left\langle h
   u_g^{\ast} \hspace{0.25em}, v \right\rangle_g \Omega \hspace{0.25em} .
\end{eqnarray*}
Clearly the domain of $D_g G$ is of type $E \times \mathcal{H}$, with $E
\subset \mathcal{H}$ a linear space dense inside $\mathcal{H}$. We remind now
that the Levi-Civita connection $\nabla_G = D + \Gamma_G$ of $G$ is given by
the formula
\[  2 \hspace{0.25em}  \Gamma_G (g) (u, v) \hspace{0.75em} : =
   \hspace{0.75em} G_g^{- 1} \Big[ D_g G (u, v) \hspace{0.75em} + \hspace{0.75em}
   D_g G (v, u) \hspace{0.75em} - \hspace{0.75em} D_g G (\bullet, u) v \Big]
   \hspace{0.25em} . \]
We explicit the therm
\begin{eqnarray*}
  &  & D_g G (u, v) \hspace{0.75em} + \hspace{0.75em} D_g G (v, u)
  \hspace{0.75em} - \hspace{0.75em} D_g G (\bullet, u) v\\
  &  & \\
  & = & - \hspace{0.75em} 2 \hspace{0.25em} \int_X \tmop{Tr}_{_{\mathbbm{R}}}
  \left( u_g^{\ast} \hspace{0.25em} v^{\ast}_g \hspace{0.25em}
  \bullet^{\ast}_g \hspace{0.75em} + \hspace{0.75em} v_g^{\ast}
  \hspace{0.25em} u^{\ast}_g \hspace{0.25em} \bullet^{\ast}_g \hspace{0.75em}
  - \hspace{0.75em} \bullet^{\ast}_g \hspace{0.25em} u_g^{\ast}
  \hspace{0.25em} v^{\ast}_g \right) \hspace{0.25em} \Omega \hspace{0.25em}\\
  &  & \\
  & = & - \hspace{0.75em} 2 \hspace{0.25em} \int_X \tmop{Tr}_{_{\mathbbm{R}}}
  (u_g^{\ast} \hspace{0.25em} v^{\ast}_g \hspace{0.25em} \bullet^{\ast}_g)
  \hspace{0.25em} \Omega \hspace{0.25em}\\
  &  & \\
  & = & - \hspace{0.25em} \int_X \tmop{Tr}_{_{\mathbbm{R}}} \left( u_g^{\ast}
  \hspace{0.25em} v^{\ast}_g \hspace{0.25em} \bullet^{\ast}_g \hspace{0.75em}
  + \hspace{0.75em} v_g^{\ast} \hspace{0.25em} u^{\ast}_g \hspace{0.25em}
  \bullet^{\ast}_g \right) \hspace{0.25em} \Omega \hspace{0.25em}\\
  &  & \\
  & = & - \hspace{0.25em} \int_X \left\langle u v^{\ast}_g \hspace{0.25em} \;\,+\;\,
  v u^{\ast}_g\,, \bullet \right\rangle_g \Omega\; .
\end{eqnarray*}
We infer the identity
\begin{equation}
  \label{Krist-G}
2 \,\Gamma_G (g) (u, v) \;\;=\;\; -\;\, u \,v^{\ast}_g \;\,-\;\, v \,u^{\ast}_g \;.
\end{equation}
We deduce that the equation of the geodesics $\ddot{g}_t + \Gamma_G (g_t) (
\dot{g}_t, \dot{g}_t) = 0 \hspace{0.25em},$ writes as
$$
\ddot{g}_t \hspace{0.75em} - \hspace{0.75em} \dot{g}_t
   \hspace{0.25em} g_t^{- 1} \dot{g}_t \hspace{0.75em} = \hspace{0.75em} 0
   \hspace{0.25em},
$$
(we consider $g_t$ as a morphism $g_t : T_X \rightarrow T_X^{\ast}$) or as
$\ddot{g}_t^{\ast} - ( \dot{g}_t^{\ast})^2 = 0 .$ Moreover time deriving the
identity $\dot{g}_t = g_t \hspace{0.25em} \dot{g}_t^{\ast}$, which defines the
endomorphism $ \dot{g}_t^{\ast},$we obtain the expression
\[ \ddot{g}_t \hspace{0.75em} = \hspace{0.75em} \dot{g}_t \hspace{0.25em}
   \dot{g}_t^{\ast} \hspace{0.75em} + \hspace{0.75em} g_t \hspace{0.25em} 
   \frac{d}{dt} \hspace{0.25em}  \dot{g}_t^{\ast} \hspace{0.25em}, \]
and thus
\begin{equation}
  \label{Inv-geo-endm} 0 \;\;=\;\; \ddot{g}_t^{\ast} \hspace{0.75em} - \hspace{0.75em}
  ( \dot{g}_t^{\ast})^2 \hspace{0.75em} = \hspace{0.75em} \frac{d}{dt}
  \hspace{0.25em} \dot{g}_t^{\ast} .
\end{equation}
But the equation (\ref{Inv-geo-endm}) implies the identity $g_t^{- 1}
\dot{g}_t = g_0^{- 1} \dot{g}_0$. Thus the geodesic writes explicitly as
\begin{equation}
  \label{expr-geod} g_t \hspace{0.75em} = \hspace{0.75em} g_0 \hspace{0.25em}
  e^{\hspace{0.25em} tg_0^{- 1} \dot{g}_0} \hspace{0.25em} .
\end{equation}
We show in the subsection \ref{apdx3} of the appendix that the Riemannian space $(\mathcal{M}, G)$ is non-positively
curved. 

{\tmstrong{The metric point of view}}. For any $g_0, g_1 \in \mathcal{M}$ we
consider the set of curves
\begin{eqnarray*}
  C^1 (g_0, g_1) & \assign & \left\{ h \in C^1 ([0, 1], \mathcal{M}) \mid\; h_0
  \;\;=\;\; g_0\;,\;\; h_1\;\; =\;\; g_1 \right\}\;,
\end{eqnarray*}
and the distance function
\begin{eqnarray*}
  d_G (g_0, g_1) & \assign & \inf_{h \in C^1 (g_0, g_1)}  \int^1_0 d t \left[
  \int_X | \dot{h}_t |^2_{h_t} \Omega \right]^{1 / 2} .
\end{eqnarray*}
Let $(g_t)_{t \in \mathbbm{R}} \subset \mathcal{M}$ be the unique geodesic
joining $g_0$ to $g_1$. Then
\begin{eqnarray*}
  d_G (g_0, g_1) & = &  \left[ \int_X | \dot{g}_0 |^2_{g_0} \Omega \right]^{1
  / 2} \\
  &  & \\
  & = & \left[ \int_X \tmop{Tr}_{_{\mathbbm{R}}} (g^{- 1}_0  \dot{g}_0)^2
  \Omega \right]^{1 / 2}\\
  &  & \\
  & = & \left[ \int_X \tmop{Tr}_{_{\mathbbm{R}}} \left[ \log (g^{- 1}_0 g_1)
  \right]^2 \Omega \right]^{1 / 2},
\end{eqnarray*}
thanks to the expression of the geodesics (\ref{expr-geod}). We observe now that a
sequence $\{g_k\}_k \subset \mathcal{M}$ is $d_G$-Cauchy if and only if the sequence $\{\log
(g^{- 1}_0 g_k)\}_k$ is $L^2$-Cauchy. Indeed hold the identity
\begin{eqnarray*}
  d_G (g_k, g_{k + l}) & = & \left[ \int_X \tmop{Tr}_{_{\mathbbm{R}}} \left[
  \log (g^{- 1}_0 g_{k + l}) \;\,-\;\, \log (g^{- 1}_0 g_k) \right]^2 \Omega
  \right]^{1 / 2} .
\end{eqnarray*}
This is because for any $g_0$-symmetric endomorphisms $A, B, C$, with $A, B >
0$, the identity 
$$
\tmop{Tr}_{_{^{\mathbbm{R}}}} (A B C) \;\;=\;\;\tmop{Tr}_{_{^{\mathbbm{R}}}} (B A C)\;,
$$
implies
\begin{eqnarray*}
  \tmop{Tr}_{_{^{\mathbbm{R}}}} \left[ \log (A B) C \right] & = &
  \tmop{Tr}_{_{^{\mathbbm{R}}}} \left[ (\log A \;\,+\;\, \log B) C \right]\; .
\end{eqnarray*}
We obtain
\begin{eqnarray*}
  d_G (g_k, g_{k + l}) & = & \left[ \int_X \big| \log (g^{- 1}_0 g_{k + l}) \;\,-\;\, \log
  (g^{- 1}_0 g_k) \big|^2_{g_0} \Omega \right]^{1 / 2}\;,
\end{eqnarray*}
since the endomorphism 
$$
\log (g^{- 1}_0 g_{k + l}) \;\,-\;\, \log (g^{- 1}_0 g_k)\;,
$$ 
is
$g_0$-symmetric. We infer that the metric completion $\mathcal{M}^{d_G}$ of
$(\mathcal{M}, d_G)$ is given by
\begin{eqnarray*}
  \mathcal{M}^{d_G} \;\; \equiv \;\; \Big\{ g \in \tmop{Mes} (X,
  S_{_{\mathbbm{R}}}^2 T^{\ast}_X)_{/ a.e} \mid \;g\;\, \geqslant\;\, 0\;,\; \log (g^{- 1}_0
  g) \in L^2 (X, \tmop{End} (T_X)) \Big\} \;,
\end{eqnarray*}
where the notation $/a.e$ means the almost everywhere equivalence relation.
Moreover is clear that 
$$
\mathcal{M}^{d_G}\;\;=\;\;\overline{\mathcal{M}} ^{d_G}\;.
$$
Using quite elementary relaxation considerations we can show that the metric space 
$(\mathcal{M}^{d_G},d_G)$ is a non positively curved length space in the sense of Alexandrov.
\section{The total second variation of Perelman's $\mathcal{W}$ functional}

We give now a proof theorem \ref{lm-IIvr-W}.

\begin{proof}
  Let $h_t \assign g_t - \tmop{Ric}_{g_t} (\Omega) .$ The computation of the
  geodesic equation done in the previous section shows
  \[ \nabla_G  \,\dot{g}_t \,( \dot{g}_t) \hspace{0.75em} = \hspace{0.75em}
     \ddot{g}_t \hspace{0.75em} + \hspace{0.75em} \Gamma_G (g_t) ( \dot{g}_t,
     \dot{g}_t) \hspace{0.75em} = \hspace{0.75em} \ddot{g}_t \hspace{0.75em} -
     \hspace{0.75em} \dot{g}_t \hspace{0.25em} \dot{g}_t^{\ast}
     \hspace{0.25em} . \]
  We infer that the Hessian of $\mathcal{W}_{\Omega}$ with respect to the
  Riemannian metric $G$ is given by
  \begin{eqnarray*}
    \nabla_G \,D\, \mathcal{W}_{\Omega} \,(g_t)\, ( \dot{g}_t, \dot{g}_t) & = &
    \frac{d^2}{dt^2} \hspace{0.25em}  \mathcal{W}_{\Omega} (g_t)
    \hspace{0.75em} - \hspace{0.75em} D \,\mathcal{W}_{\Omega} (\nabla_G \,
    \dot{g}_t \,( \dot{g}_t))\\
    &  & \\
    & = & \frac{d}{dt} \hspace{0.25em} \int_X \left\langle \dot{g}_t
    \hspace{0.25em}, \hspace{0.25em} h_t \right\rangle_{g_t} \Omega
    \hspace{0.75em} - \hspace{0.75em} \int_X  \left\langle \nabla_G  \,\dot{g}_t\,
    ( \dot{g}_t) \hspace{0.25em}, \hspace{0.25em} h_t \right\rangle_{g_t}
    \Omega\\
    &  & \\
    & = & \int_X \frac{d}{dt}  \left\langle \dot{g}_t \hspace{0.25em},
    \hspace{0.25em} h_t  \right\rangle_{g_t} \Omega 
\\
\\
& -&
    \int_X \tmop{Tr}_{_{\mathbbm{R}}} \Big[ \left(
    \ddot{g}_t^{\ast} \hspace{0.75em} - \hspace{0.75em} ( \dot{g}_t^{\ast})^2
    \right) h_t^{\ast} \Big] \hspace{0.25em} \Omega\\
    &  & \\
    & = & \int_X \frac{d}{dt} \hspace{0.25em} \tmop{Tr}_{_{\mathbbm{R}}}
    \left(  \dot{g}_t^{\ast} \hspace{0.25em} h_t^{\ast}  \right)
    \hspace{0.25em} \Omega \hspace{0.75em} - \hspace{0.75em} \int_X
    \tmop{Tr}_{_{\mathbbm{R}}} \left[ \hspace{0.25em} \frac{d}{dt}
    \hspace{0.25em}  \dot{g}_t^{\ast}  \hspace{0.25em} h^{\ast}_t \right]
    \hspace{0.25em} \Omega\\
    &  & \\
    & = & \int_X \tmop{Tr}_{_{\mathbbm{R}}} \left[ \hspace{0.25em}
    \dot{g}_t^{\ast} \hspace{0.25em}  \frac{d}{dt}  \hspace{0.25em} h^{\ast}_t
    \right] \hspace{0.25em} \Omega\\
    &  & \\
    & = & \int_X \tmop{Tr}_{_{\mathbbm{R}}} \left[ \hspace{0.25em} -
    \hspace{0.25em} ( \dot{g}_t^{\ast})^2 \hspace{0.25em} h^{\ast}_t
    \hspace{0.75em} + \hspace{0.75em} \dot{g}_t^{\ast} \hspace{0.25em}
    \dot{h}_t^{\ast} \right] \hspace{0.25em} \Omega\\
    &  & \\
    & = & - \hspace{0.75em} \int_X \tmop{Tr}_{_{\mathbbm{R}}}  \left[ (
    \dot{g}_t^{\ast})^2 \hspace{0.25em} h^{\ast}_t \right] \hspace{0.25em}
    \Omega \hspace{0.75em} + \hspace{0.75em} \int_X | \dot{g}_t |^2_{g_t}
    \Omega\\
    &  & \\
    & - & \int_X \left\langle \dot{g}_t\,, \frac{d}{dt} \hspace{0.25em}
    \tmop{Ric}_{g_t} (\Omega) \right\rangle_{g_t} \Omega \hspace{0.25em} .
  \end{eqnarray*}
Using corollary \ref{vr-ORc-adj} and integrating by parts we infer
  \begin{eqnarray*}
    - \hspace{0.75em} \int_X \left\langle \dot{g}_t\,, \frac{d}{dt}
    \hspace{0.25em} \tmop{Ric}_{g_t} (\Omega) \right\rangle_{g_t} \Omega & = &
    \frac{1}{6} \hspace{0.75em} \int_X \left\langle \dot{g}_t\,,
    \hat{\nabla}_{g_t}^{\ast_{_{\Omega}}}  \hat{\nabla}_{g_t}  \,\dot{g}_t
    \right\rangle_{g_t} \Omega\\
    &  & \\
    & - & \int_X \left\langle \dot{g}_t\,, \nabla_{g_t}^{\ast_{_{\Omega}}}
    \nabla_{g_t}  \,\dot{g}_t \right\rangle_{g_t} \Omega\\
    &  & \\
    & = & \int_X \left[ \hspace{0.25em} \frac{1}{6} \hspace{0.25em} \big|
    \hat{\nabla}_{g_t}  \,\dot{g}_t \big|^2_{g_t} \hspace{0.75em} - \hspace{0.75em}
    \big| \nabla_{g_t}  \,\dot{g}_t \big|^2_{g_t} \right] \Omega \hspace{0.25em},
  \end{eqnarray*}
  which implies the required second variation formula (\ref{rm-2vr-W}).
\end{proof}

We deduce easily corollary \ref{cor-II-Var-W}.

\begin{proof}
The assumption $v \in \mathbbm{F}_g$ implies that the tensor $\nabla_g\, v$ is
$3$-symmetric and thus $\hat{\nabla}_g \,v = 3 \nabla_g \,v$. Then the variation
formula (\ref{rm-2vr-W}) implies (\ref{part-vr-W}). We show now the inequality
(\ref{ineq-vr-W}). Let $(e_j)_j$ be a $g$-orthonormal basis and observe
  that
  \begin{eqnarray*}
    \tmop{Tr}_{_{\mathbbm{R}}} \left[ (v_g^{\ast})^2 \tmop{Ric}^{\ast}_g
    (\Omega) \right] & = & \tmop{Tr}_{_{\mathbbm{R}}} \left[ v_g^{\ast}
    \tmop{Ric}^{\ast}_g (\Omega) v_g^{\ast} \right]\\
    &  & \\
    & = & g \left( v_g^{\ast} \tmop{Ric}^{\ast}_g (\Omega) v_g^{\ast} e_j,
    e_j \right)\\
    &  & \\
    & = & g \left( \tmop{Ric}^{\ast}_g (\Omega) v_g^{\ast} e_j, v_g^{\ast}
    e_j \right)\\
    &  & \\
    & = & \tmop{Ric}_g (\Omega) \left( v_g^{\ast} e_j, v_g^{\ast} e_j
    \right)\\
    &  & \\
    & \geqslant & \varepsilon\, g \left( v_g^{\ast} e_j, v_g^{\ast} e_j
    \right)\\
    &  & \\
    & = & \varepsilon\, |v|^2_g\;,
  \end{eqnarray*}
  which shows the required inequality.
\end{proof}

We show now lemma \ref{Kah-vr-W-fxcx}.

\begin{proof}
  We remind first that any smooth volume form $\Omega > 0$ over a complex
  manifold $(X, J)$ of complex dimension $n$, induces a hermitian metric
  $h_{\Omega}$ over the canonical bundle $K_{_{X, J}} \assign \Lambda_{_J}^{n,
  0} T^{\ast}_{_X} $ given by the formula
  \[ h_{\Omega} (\alpha, \beta) \;\;\assign\;\; \frac{n!\, i^{n^2} \alpha \wedge
     \overline{\beta} }{\Omega} \;. \]
  By abuse of notations we will denote by $\Omega^{- 1}$ the metric
  $h_{\Omega} .$ The dual metric $h_{\Omega}^{\ast}$ on the anti canonical
  bundle $K^{- 1}_{_{X, J}} = \Lambda_{_J}^{n, 0} T_{_X}$ is given by the
  formula
  \[ h_{\Omega}^{\ast} (\xi, \eta) \;\;=\;\; (- i)^{n^2} \Omega \left( \xi_{},
     \bar{\eta} \right) / n!\; . \]
  Again by abusing notations we denote by $\Omega$ the dual metric
  $h_{\Omega}^{\ast} .$ We define the $\Omega$-Ricci form
  \[ \tmop{Ric}_{_J} \left( \Omega \right) \;\;\assign\;\; i\,\mathcal{C}_{\Omega} 
     \big( K^{- 1}_{_{X, J}} \big) \;\;=\;\; - \;\,i\,\mathcal{C}_{\Omega^{- 1}} \big(
     K_{_{X, J}} \big)\; . \]
  In particular $\tmop{Ric}_{_J} (\omega) = \tmop{Ric}_{_J} (\omega^n) .$ We
  remind also that for any $J$-invariant K\"ahler metric $g$ the associated
  symplectic form $\omega \assign g J$ satisfies the elementary identity
  \begin{eqnarray*}
    \tmop{Ric} (g) & = & - \hspace{0.25em} \tmop{Ric}_{_J} (\omega)\, J
    \hspace{0.25em} .
  \end{eqnarray*}
  Moreover for all twice differentiable function $f$ hold the identity
  \begin{eqnarray*}
    \nabla_g \,d\,f & = & - \hspace{0.25em} \big(i\, \partial_{_J}
    \overline{\partial}_{_J} f \big)\, J \hspace{0.75em} + \hspace{0.75em} g\,
    \overline{\partial}_{_{T_{X, J}}} \nabla_g \,f\; .
  \end{eqnarray*}
We infer
  the decomposition identity
  \begin{equation}
    \label{cx-dec-Ric} \tmop{Ric}_g (\Omega) \hspace{0.75em} = \hspace{0.75em}
    - \;\, \tmop{Ric}_{_J} (\Omega) \,J \hspace{0.75em} +
    \hspace{0.75em} g \,\overline{\partial}_{_{T_{X, J}}} \nabla_g \log
    \frac{dV_g}{\Omega} \;.
  \end{equation}
  Let now $(g_t)_t$ be a smooth family of $J$-invariant K\"ahler metrics such
  that $g_0 = g$ and $\dot{g}_0 = v .$ We claim that
  \begin{eqnarray*}
    \left\langle \dot{g}_t\,, \frac{d}{dt} \hspace{0.25em} \tmop{Ric}_{g_t}
    (\Omega) \right\rangle_{g_t} & \equiv & 0 \;.
  \end{eqnarray*}
  In fact time deriving the complex decomposition formula (\ref{cx-dec-Ric})
  with respect to the evolving family $(g_t)_t$ we obtain the identity
  \begin{eqnarray*}
    \frac{d}{dt} \hspace{0.25em} \tmop{Ric}_{g_t} (\Omega) & = & \dot{g}_t \,\overline{\partial}_{_{T_{X, J}}}
    \nabla_{g_t}\, f_t\;\, +\;\, g_t \,\overline{\partial}_{_{T_{X, J}}} 
    \left( \frac{d}{dt} \hspace{0.25em} \nabla_{g_t} \,f_t \right) \;.
  \end{eqnarray*}
  Thus
  \begin{eqnarray*}
    \left\langle \dot{g}_t\,, \frac{d}{dt} \hspace{0.25em} \tmop{Ric}_{g_t}
    (\Omega) \right\rangle_{g_t} & = & \tmop{Tr}_{_{\mathbbm{R}}} \left[
    \dot{g}^{\ast}_t\, g^{- 1}_t \, \frac{d}{dt} \hspace{0.25em} \tmop{Ric}_{g_t}
    (\Omega) \right]\\
    &  & \\
    & = & \tmop{Tr}_{_{\mathbbm{R}}} \left[ ( \dot{g}^{\ast}_t)^2 \,
    \overline{\partial}_{_{T_{X, J}}} \nabla_{g_t} \,f_t \right]
\\
\\
&+&
\tmop{Tr}_{_{\mathbbm{R}}} \left[ \dot{g}^{\ast}_t \,
    \overline{\partial}_{_{T_{X, J}}} \left( \frac{d}{dt} \hspace{0.25em} \nabla_{g_t}\,
    f_t \right) \right]\\
    &  & \\
    & = & 0\;,
  \end{eqnarray*}
  since $\dot{g}_t$ is $J$-invariant and the endomorphisms
$$
\overline{\partial}_{_{T_{X, J}}} \nabla_{g_t} \,f_t\;,\qquad 
    \overline{\partial}_{_{T_{X, J}}} \left( \frac{d}{dt} \hspace{0.25em} \nabla_{g_t}\,
    f_t \right) \;,
$$
are $J$-anti-linear. The conclusion follows from the last
  expression of the second variation of the $\mathcal{W}_{\Omega}$ functional
  obtained in the proof of lemma \ref{lm-IIvr-W}. 
\end{proof}

In order to compute our general total second variation formula for Perelman's
$\mathcal{W}$-functional with respect to variations of K\"ahler structures we
need to perform first a careful study of their properties. This is done
in detail in the next section. It represent the key step which allows us to
obtain our main result.

\section{Properties of the variations of K\"ahler \\Structures}\label{Prop-vr-KStr}

Let $\mathcal{M} \subset C^{\infty} (X, S^2_{_{\mathbbm{R}}} T_X^{\ast})$ be
the space of smooth Riemannian metrics over a compact manifold $X$, let
$\mathcal{J} \subset C^{\infty} (X, \tmop{End}_{_{\mathbbm{R}}} (T_X))$ be the
set of smooth almost complex structures and let
\[ \mathcal{KS} \hspace{0.75em} : = \hspace{0.75em} \Big\{ (J, g) \in
   \mathcal{J} \times \mathcal{M} \hspace{0.25em} \mid \hspace{0.25em}
   \hspace{0.25em} g \hspace{0.75em} = \hspace{0.75em} J^{\ast} g\,J
   \hspace{0.25em}, \hspace{0.75em}  \nabla_g \,J \hspace{0.75em}
   = \hspace{0.75em} 0 \;\Big\} \hspace{0.25em}, \]
be the space of K\"ahler structures. We remind that if $A \in
\tmop{End}_{_{\mathbbm{R}}} (T_X)$ then its transposed $A^T_g$ with respect to
$g$ is given by the formula
$$
A^T_g \;\;=\;\; g^{- 1} A^{\ast} g\;.
$$
We infer that the compatibility
condition 
$
g = J^{\ast} g\,J
$
is equivalent to the condition 
$$
J^T_g \;\;=\;\; -\;\, J\;.
$$
We observe now the following elementary lemmas.
\begin{lemma}
  \label{Tng-Kah-Str}For any smooth path $ (g_t,
  J_t)_t \subset \mathcal{K}\mathcal{S} \text{}$ hold the equivalent
  identities
  \begin{equation}
    \label{der-J-inv} 2 \,( \dot{g}_t^{\ast})_{J_t}^{0, 1}\;\; =\;\; -\;\, J_t \, \dot{J}_t \;\,-\;\,
    (J_t  \dot{J}_t)_{g_t}^T\;,
  \end{equation}
  
  \begin{equation}
    \label{der-J-inv-T} ( \dot{J}_t)^T_{g_t} \hspace{0.75em} + \hspace{0.75em}
    \dot{J}_t \;\;=\;\; J_t \, \dot{g}_t^{\ast} \hspace{0.75em} -
    \hspace{0.75em} \dot{g}_t^{\ast} J_t\;,
  \end{equation}
\end{lemma}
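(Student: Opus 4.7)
The plan is to differentiate the two defining algebraic compatibility conditions of a point in $\mathcal{KS}$, namely $J_t^{\,2} = -\mathrm{Id}$ and $g_t = J_t^{\ast} g_t J_t$ (the last viewed as an identity of morphisms $T_X \to T_X^{\ast}$). The integrability/K\"ahler condition $\nabla_{g_t} J_t = 0$ plays no role here, so the identities in fact hold for any smooth curve in the space of almost Hermitian structures. Differentiating $J_t^{\,2} = -\mathrm{Id}$ gives immediately
$$
J_t\,\dot J_t \;+\; \dot J_t\,J_t \;\;=\;\; 0\,,
$$
so that $\dot J_t$ anti-commutes with $J_t$, and in particular $J_t\,\dot J_t\,J_t = \dot J_t$.

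Next I would differentiate the Hermitian compatibility $g_t = J_t^{\ast} g_t J_t$ and obtain
$$
\dot g_t \;\;=\;\; \dot J_t^{\ast}\, g_t\, J_t \;+\; J_t^{\ast}\, \dot g_t\, J_t \;+\; J_t^{\ast}\, g_t\, \dot J_t\,.
$$
Left-multiplying by $g_t^{-1}$ and using the definition $A^T_{g} = g^{-1} A^{\ast} g$ together with the compatibility in the equivalent form $(J_t)^T_{g_t} = -\,J_t$, the three terms on the right-hand side become respectively $(\dot J_t)^T_{g_t}\, J_t$, $-\,J_t\, \dot g_t^{\ast}\, J_t$ and $-\,J_t\, \dot J_t$. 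Rearranging the resulting relation yields
$$
\dot g_t^{\ast} \;+\; J_t\, \dot g_t^{\ast}\, J_t \;\;=\;\; (\dot J_t)^T_{g_t}\, J_t \;-\; J_t\, \dot J_t\,.
$$

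From this master identity both statements follow. For (\ref{der-J-inv}) the left side is exactly $2(\dot g_t^{\ast})_{J_t}^{0,1}$, since the $J_t$-anti-linear component of an endomorphism $A$ is $\tfrac{1}{2}(A + J_t A J_t)$; the right side can be rewritten using the transposition rule $(J_t \dot J_t)^T_{g_t} = (\dot J_t)^T_{g_t}\,(J_t)^T_{g_t} = -\,(\dot J_t)^T_{g_t}\, J_t$, which produces exactly $-\,J_t\, \dot J_t\,-\,(J_t\, \dot J_t)^T_{g_t}$. For (\ref{der-J-inv-T}) I would instead right-multiply the master identity by $J_t$, use $J_t^{\,2} = -\mathrm{Id}$ and the identity $J_t\, \dot J_t\, J_t = \dot J_t$ deduced in the first step, and collect terms; the result is precisely $J_t\, \dot g_t^{\ast} - \dot g_t^{\ast}\, J_t = (\dot J_t)^T_{g_t} + \dot J_t$.

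The only real difficulty is bookkeeping: carefully distinguishing bilinear forms, endomorphisms, and dual maps, and applying consistently the musical isomorphism $v \mapsto v_g^{\ast} = g^{-1} v$ together with the transpose rule $A^T_g = g^{-1} A^{\ast} g$. No analytical input is needed; the lemma is a purely pointwise algebraic consequence of the first-order linearization of the Hermitian compatibility.
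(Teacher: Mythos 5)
Your proof is correct and follows essentially the same route as the paper: differentiate the compatibility $g_t = J_t^{\ast}g_t J_t$, left-multiply by $g_t^{-1}$, use $(J_t)^T_{g_t}=-J_t$ to obtain (\ref{der-J-inv}), and then pass to (\ref{der-J-inv-T}) by multiplying by $J_t$ and using the $J_t$-anti-linearity of $\dot J_t$ (equivalently of $(\dot J_t)^T_{g_t}$) coming from $J_t^2=-\mathrm{Id}$. Your explicit remark that the condition $\nabla_{g_t}J_t=0$ is not needed, so the identities hold along any curve of almost Hermitian structures, is accurate but only a minor refinement of the paper's argument.
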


\begin{proof}
  Time deriving the compatibility condition $g_t = J_t^{\ast} g_t J_t$ we
  obtain
  \begin{equation}
    \label{gn-der-J-inv}  \dot{g}_t \hspace{0.75em} = \hspace{0.75em}
    \dot{J}_t^{\ast} g_t J_t \hspace{0.75em} + \hspace{0.75em} J_t^{\ast}
    \dot{g}_t J_t \hspace{0.75em} + \hspace{0.75em} J_t^{\ast} g_t \dot{J}_t,
  \end{equation}
  Multiplying both l.h.s of (\ref{gn-der-J-inv}) with $g^{- 1}$ we infer
  \begin{eqnarray*}
    \dot{g}_t^{\ast} & = & \dot{J}_t^T J_t \hspace{0.75em} + \hspace{0.75em}
    J_t^T \dot{g}_t^{\ast} J_t \hspace{0.75em} + \hspace{0.75em} J_t^T
    \dot{J}_t\\
    &  & \\
    & = & - \hspace{0.75em} (J_t \dot{J}_t)^T_{g_t} \hspace{0.75em} -
    \hspace{0.75em} J \dot{g}_t^{\ast} J_t \hspace{0.75em} - \hspace{0.75em}
    J_t \dot{J}_t \hspace{0.25em},
  \end{eqnarray*}
  and thus (\ref{der-J-inv}). We observe now that the identity
  (\ref{der-J-inv}) rewrites as (\ref{der-J-inv-T}) since the endomorphism $(
  \dot{J}_t)^T_{g_t}$ is $J_t$-anti linear.
\end{proof}

\begin{lemma}
  Let $(g_t)_{t \geqslant 0}$ be a smooth family of Riemannian metrics and let
  $(J_t)_{t \geqslant 0}$ be a family of endomorphisms of $T_X$ solution of
  the $\tmop{ODE}$
  \[ 2 \,\dot{J}_t \;\;=\;\; J_t  \,\dot{g}_t^{\ast} \hspace{0.75em} - \hspace{0.75em}
     \dot{g}_t^{\ast} J_t, \]
  with initial conditions $J^2_0 \; = \; - \;\I_{T_X}$
  and $(J_0)^T_{g_0} \; = \; - \;J_0$. Then this
  conditions are preserved in time i.e. $J^2_t \; =\; -\; \I_{T_X}$ and $(J_t)^T_{g_t} \; =\; -\; J_t$ for all $t \geqslant 0$. 
\end{lemma}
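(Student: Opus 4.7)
The plan is to rephrase each of the two initial conditions as the vanishing of a tensor $F_t$, and then show that $F_t$ satisfies a homogeneous linear ODE in $t$; uniqueness of solutions together with $F_0 = 0$ will force $F_t \equiv 0$ for all $t \geqslant 0$, pointwise on $X$.

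For the almost-complex-structure condition I would set $A_t \assign J_t^2 + \mathbb{I}_{T_X}$. Plugging the ODE $2 \dot{J}_t = J_t \, \dot{g}_t^{\ast} - \dot{g}_t^{\ast} J_t$ into both halves of $\dot{J}_t J_t + J_t \dot{J}_t$, the cross-terms $\pm J_t \, \dot{g}_t^{\ast} J_t$ cancel and what remains is
\[
2 \, \dot{A}_t \;\;=\;\; J_t^2\, \dot{g}_t^{\ast} \;\,-\;\, \dot{g}_t^{\ast}\, J_t^2 \;\;=\;\; \big[A_t,\, \dot{g}_t^{\ast}\big],
\]
a homogeneous linear ODE for $A_t$ with $A_0 = 0$. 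Hence $A_t \equiv 0$, i.e. $J_t^2 = -\mathbb{I}_{T_X}$ for all $t$.

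For the antisymmetry condition $(J_t)^T_{g_t} = -J_t$, since the transpose depends on the evolving metric, the cleanest object to track is the bilinear form $C_t \assign g_t J_t + J_t^{\ast} g_t : T_X \to T^{\ast}_X$, whose vanishing is equivalent to $J_t$ being $g_t$-antisymmetric and which satisfies $C_0 = 0$ by hypothesis. I would expand
\[
\dot{C}_t \;\;=\;\; \dot{g}_t J_t \;\,+\;\, g_t \dot{J}_t \;\,+\;\, \dot{J}_t^{\ast} g_t \;\,+\;\, J_t^{\ast} \dot{g}_t,
\]
and substitute $2 g_t \dot{J}_t = g_t J_t g_t^{-1} \dot{g}_t - \dot{g}_t J_t$ together with $2 \dot{J}_t^{\ast} g_t = \dot{g}_t g_t^{-1} J_t^{\ast} g_t - J_t^{\ast} \dot{g}_t$ (the latter obtained by dualising the ODE and using the symmetry of $g_t$ and $\dot{g}_t$). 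After substitution only the two $g_t^{-1}$-containing terms remain nontrivial; rewriting $g_t J_t = C_t - J_t^{\ast} g_t$ and $J_t^{\ast} g_t = C_t - g_t J_t$ in them produces exactly the cancellations needed to kill the surviving $\pm \dot{g}_t J_t$ and $\pm J_t^{\ast} \dot{g}_t$ contributions, leaving
\[
2 \, \dot{C}_t \;\;=\;\; C_t \, \dot{g}_t^{\ast} \;\,+\;\, \dot{g}_t \, g_t^{-1} C_t,
\]
again a homogeneous linear ODE with $C_0 = 0$, hence $C_t \equiv 0$.

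The only genuine obstacle is the careful bookkeeping in the derivation of the ODE for $C_t$: one must perform the two substitutions and then verify that the non-$C_t$ contributions cancel precisely, which is possible essentially because both substitutions come from the \emph{same} ODE and the symmetries of $g_t$ and $\dot{g}_t$ match those cancellations. Once the two linear ODEs for $A_t$ and $C_t$ are in place, pointwise uniqueness of smooth solutions to linear ODEs with smooth coefficients closes the argument.
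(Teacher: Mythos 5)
Your proposal is correct and follows essentially the same route as the paper: the paper also derives the homogeneous linear commutator ODE $2\,\frac{d}{dt}J_t^2 = J_t^2\,\dot g_t^{\ast} - \dot g_t^{\ast} J_t^2$ for the first condition, and tracks exactly the tensor $M_t := g_t J_t + J_t^{\ast} g_t$ (your $C_t$), obtaining $2\,\dot M_t = M_t\, g_t^{-1}\dot g_t + \dot g_t\, g_t^{-1} M_t$ and concluding by uniqueness of the Cauchy problem with $M_0 = 0$. Your introduction of $A_t := J_t^2 + \mathbb{I}_{T_X}$ is only a cosmetic repackaging of the same argument.
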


\begin{proof}
  Expanding the time derivative of $J^2_t$ we get
  \begin{eqnarray*}
    2 \,\frac{d}{d t} \,J^2_t & = & J^2_t \, \dot{g}_t^{\ast} \hspace{0.75em} -
    \hspace{0.75em} \dot{g}_t^{\ast} J^2_t \;.
  \end{eqnarray*}
  Thus the condition $J^2_t \; = \; -\; \I_{T_X} $ is
  preserved for all $ t \geqslant 0$. Dualysing the evolution equation
  of $J_t$ we infer the equation
  \[ 2 \,\dot{J}_t^{\ast} \;=\; \dot{g}_t \,g^{- 1}_t\, J^{\ast}_t \;\,-\;\, J^{\ast}_t \,
     \dot{g}_t \,g^{- 1}_t \;. \]
  Thus if we set $M_t \;\assign\; g_t \,J_t\; +\; J^{\ast}_t\, g_t$, then the time
  derivative of $M_t$ expands as
  \begin{eqnarray*}
    2 \,\dot{M_t} & = & 2 \,\dot{g}_t\, J_t \;\,+\;\, g_t \,\left( J_t  \dot{g}_t^{\ast}
    \;\, - \;\, \dot{g}_t^{\ast}\, J_t \right) 
\\
\\
&+&
2\,
    \dot{J}_t^{\ast}\, g_t \;\,+\;\, 2\, J^{\ast}_t\, \dot{g}_t\\
    &  & \\
    & = & 2 \,\dot{g}_t \,J_t \;\,+\;\, g_t\, J_t  \,\dot{g}_t^{\ast} \;\, -
\;\, \dot{g}_t \,J_t
\\
\\
&+&
\dot{g}_t\, g^{- 1}_t\, J^{\ast}_t \,g_t\;\, -\;\,
    J^{\ast}_t\,  \dot{g}_t
\;\, +\;\, 2\, J^{\ast}_t\, \dot{g}_t \\
    &  & \\
    & = &  \dot{g}_t \,J_t \;\,+ J^{\ast}_t \, \dot{g}_t \;\,+\;\, g_t \,J_t\, g^{- 1}_t 
    \,\dot{g}_t \;\,+\;\, \dot{g}_t\, g^{- 1}_t\, J^{\ast}_t\, g_t\\
    &  & \\
    & = &  \dot{g}_t \,J_t \;\,+\;\, J^{\ast}_t\,  \dot{g}_t \;\,+\;\, M_t\, g^{- 1}_t \, \dot{g}_t 
\\
\\
&-&
    J^{\ast}_t \, \dot{g}_t \;\,+\;\, \dot{g}_t\, g^{- 1}_t \,M_t \;\,-\;\, \dot{g}_t\, J_t\\
    &  & \\
    & = & M_t \,g^{- 1}_t\, \dot{g}_t \;\,+\;\, \dot{g}_t\, g^{- 1}_t\, M_t\;,
  \end{eqnarray*}
  which implies $M_t \equiv 0$, by the uniqueness of the Cauchy problem since
  $M_0 = 0,$ by assumption. We infer that the condition $(J_t)^T_{g_t}
  \; = \; -\; J_t$ is also preserved for all $t
  \geqslant 0$.
\end{proof}
We
introduce now the following quite standard notations. For any section $S \in C^{\infty} (X,
E)$ of $E : = \tmop{End}_{_{\R}} (T_{_X})$ and any $\xi, \eta \in T_X$ we
define
\begin{eqnarray*}
  \nabla_{g, J}^{1, 0} \,S \,(\xi, \eta) & : = & \frac{1}{2}\,  \Big[ \nabla_g \,S
  \,(\xi, \eta) \hspace{0.75em} - \hspace{0.75em} J\, \nabla_g \,S \,(J \xi, \eta)
  \Big] \hspace{0.25em},\\
  &  & \\
  \nabla_{g, J}^{0, 1} \,S \,(\xi, \eta) & : = &  \frac{1}{2}\,  \Big[ \nabla_g \,S
  \,(\xi, \eta) \hspace{0.75em} + \hspace{0.75em} J\, \nabla_g \,S \,(J \xi, \eta)
  \Big] \hspace{0.25em} .
\end{eqnarray*}
We define also the $J$-anti linear operator 
$$
\nabla_{g, J}^{0, 1} \,S \cdot
\hspace{0.25em} \eta \;\;: =\;\; \nabla_{g, J}^{0, 1} \,S \,(\cdot, \eta)\;.
$$ 
For any $J \in \mathcal{J}$ and any $J$-invariant $g \in \mathcal{M}$ we
define the vector space
\begin{eqnarray*}
  \mathbbm{D}^J_g & \assign & \Big\{ v \in C^{\infty} \left( X,
  S_{_{\mathbbm{R}}}^2 T^{\ast}_X \right) \mid \hspace{0.25em} \nabla_{g,
  J}^{0, 1} \,v_g^{\ast} \cdot^{} \xi \;\;=\;\; \left( \nabla_{g, J}^{0, 1} \,v_g^{\ast}
  \cdot^{} \xi \right)_g^T,\;\; \forall \,\xi\, \in\, T_X \Big\}\; .
\end{eqnarray*}
With this notation hold the following fundamental lemma which represents the key result of this section.

\begin{lemma}
  \label{Kah-crv}Let $(g_t)_{t \geqslant 0}$ be an arbitrary smooth family of
  Riemannian metrics and let $(J_t)_{t \geqslant 0}$ be a family of
  endomorphism sections of $T_X$ solution of the $\tmop{ODE}$
  \[ 2 \,\dot{J}_t \;\;=\;\; J_t \, \dot{g}_t^{\ast} \;\,-\;\,
      \dot{g}_t^{\ast} \, J_t\;, \]
  with K\"ahler initial data $(J_0, g_0)$. Then $(J_t, g_t)_{t \geqslant 0}$
  is a smooth family of K\"ahler structures if and only if $ \dot{g}_t \in
  \mathbbm{D}^{J_t}_{g_t}  \tmop{for} \tmop{all} t \geqslant 0$.
\end{lemma}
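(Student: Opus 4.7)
The plan is to reduce the claim to an infinitesimal condition on $T_t \assign \nabla_{g_t} J_t$ and then identify that condition with the defining property of $\mathbbm{D}^{J_t}_{g_t}$. By the preceding lemma, the ODE preserves $J_t^2 = -\I_{T_X}$ and $(J_t)^T_{g_t} = -J_t$, so $(g_t, J_t)$ is automatically almost Hermitian for every $t$, and $(g_t, J_t) \in \mathcal{KS}$ if and only if $T_t = 0$. Since $T_0 = 0$ by hypothesis, and $T_t$ obeys a smooth ODE of the form $\dot T_t = \mathcal{F}_t(T_t, \dot g_t)$, Cauchy--Lipschitz uniqueness shows that $T_t \equiv 0$ holds for all $t \geqslant 0$ if and only if $\mathcal{F}_t(0, \dot g_t) = 0$ for all $t \geqslant 0$. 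The lemma therefore reduces to the infinitesimal statement: $\mathcal{F}_t(0, v) = 0$ if and only if $v \in \mathbbm{D}^{J_t}_{g_t}$.

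To compute $\mathcal{F}_t(T_t, \dot g_t) = \dot T_t$, I differentiate the tensorial identity $(\nabla_{g_t, \xi} J_t)\eta = \nabla_{g_t, \xi}(J_t \eta) - J_t \nabla_{g_t, \xi} \eta$ in $t$, obtaining
$$
\dot T_t(\xi)\,\eta \;=\; \dot\nabla_{g_t}(\xi, J_t\eta) \;-\; J_t\, \dot\nabla_{g_t}(\xi, \eta) \;+\; (\nabla_{g_t, \xi} \dot J_t)\,\eta\;.
$$
I substitute Besse's formula (\ref{Bess-var-LC-conn}) for $\dot\nabla_{g_t}$ in terms of $\nabla_{g_t}\dot g_t$, and I use the ODE $2 \dot J_t = J_t \dot g_t^{\ast} - \dot g_t^{\ast} J_t$ together with $T_t = 0$, which makes $\nabla_{g_t}$ commute with $J_t$ and therefore gives $2(\nabla_{g_t, \xi}\dot J_t)\eta = J_t (\nabla_{g_t, \xi}\dot g_t^{\ast})\eta - (\nabla_{g_t, \xi}\dot g_t^{\ast})(J_t\eta)$. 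In a $g_t$-orthonormal, $J_t$-compatible, normal frame at an arbitrary point $p$, these substitutions simplify the expression $2 \dot T_t(\xi)\eta|_{T_t = 0}$ at $p$ to a combination of the form $C_{ikl}\, J^l_j \;+\; C_{ijl}\, J^k_l$, where $C_{ikl} \assign \nabla_l v_{ik} - \nabla_k v_{il}$ and $v \assign \dot g_t$.

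The remaining task is to match the vanishing of this combination with $v \in \mathbbm{D}^{J_t}_{g_t}$. Unpacking the definition, using the $g$-symmetry of each $\nabla_{g, \xi} v^{\ast}$ and the identity $J^T_g = -J$, the $g$-symmetry of the endomorphism $\xi \mapsto \nabla^{0, 1}_{g, J} v^{\ast}(\xi, \eta)$ for every $\eta$ translates into the tensorial identity $C_{bca} = (J^c_d J^a_e - J^c_e J^a_d)\,\nabla_e v_{bd}$ for all $b, c, a$. One direction of the desired equivalence then follows by contracting $C_{ikl} J^l_j + C_{ijl} J^k_l = 0$ with $J^m_j$ and using $J^l_j J^m_j = \delta^{lm}$; the reverse direction follows by substituting the $\mathbbm{D}$-form of $C_{ikl}$ and $C_{ijl}$ into the combination and simplifying with $J^2 = -\I_{T_X}$. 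The main obstacle I anticipate is this last algebraic identification: tracking the many contractions through $J$ and exploiting the antisymmetries $C_{ikl} = -C_{ilk}$ and $(J^c_d J^a_e - J^c_e J^a_d) = -(J^a_d J^c_e - J^a_e J^c_d)$ in order to match the patterns cleanly on both sides.
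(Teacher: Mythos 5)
Your plan is correct and follows essentially the same route as the paper: the paper also sets $M_t \assign \nabla_{g_t} J_t$, derives from Besse's formula (\ref{Bess-var-LC-conn}) and the ODE for $J_t$ the exact linear equation $2\,\dot M_t = M_t\,\dot g_t^{\ast} - \dot g_t^{\ast} M_t + T_t$ with a source $T_t$ depending only on $\nabla_{g_t}\dot g_t$ and $J_t$, and concludes by uniqueness of the Cauchy problem (with $M_0=0$) that $M_t\equiv 0$ if and only if $T_t\equiv 0$, which is your reduction. The only difference is cosmetic: instead of your index identification of the source's vanishing with $\dot g_t\in\mathbbm{D}^{J_t}_{g_t}$, the paper writes the source invariantly as $g_t(T_t(\xi,\eta),\mu)=2\,g_t\big(\nabla^{0,1}_{g_t,J_t}\dot g^{\ast}_t(\eta,\xi),J_t\mu\big)-2\,g_t\big(\nabla^{0,1}_{g_t,J_t}\dot g^{\ast}_t(J_t\mu,\xi),\eta\big)$, so the equivalence with the symmetry condition defining $\mathbbm{D}^{J_t}_{g_t}$ is just the substitution $\mu\mapsto J_t\mu$ (and note, for your converse, that substituting your solved form of $C$ into both terms only yields a symmetry in the two free slots; substituting into one term and using $C_{ikl}=-C_{ilk}$ gives zero directly).
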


\begin{proof}
  We set $M_t \assign \nabla_{g_t} J_t$ and we expand the time derivative
  \begin{eqnarray*}
    2 \,\dot{M}_t & = & 2 \,\dot{\nabla}_{g_t} \,J_t \;\,+\;\, 2\, \nabla_{g_t}  \dot{J}_t\\
    &  & \\
    & = & 2 \,\dot{\nabla}_{g_t} \,J_t \;\,+\;\, \nabla_{g_t}\left( J_t \,
    \dot{g}_t^{\ast} \;\, - \;\, \dot{g}_t^{\ast} J_t
    \right)\\
    &  & \\
    & = & M_t  \,\dot{g}_t^{\ast} \;\, - \;\,
    \dot{g}_t^{\ast} \,M_t \;\,+\;\, 2\, \dot{\nabla}_{g_t} \,J_t
\\
\\
&+&
J_t\, \nabla_{g_t} 
    \,\dot{g}_t^{\ast} \;\, -\;\,  \nabla_{g_t} \,
    \dot{g}_t^{\ast} \,J_t \;.
  \end{eqnarray*}
  We observe now the identity
  \[ \big( \dot{\nabla}_{g_t} J_t \big) \eta \;\;=\;\; \dot{\nabla}_{g_t} \left(
     J_t \eta \right) \;\,-\;\, J_t\,  \dot{\nabla}_{g_t} \eta\;, \]
  for all $\eta \in T_X .$ Moreover the first variation formula for the
  Levi-Civita connection (\ref{Bess-var-LC-conn}) implies the identities
  \begin{eqnarray*}
    2 \,\dot{\nabla}_{g_t} \left( J_t \,\eta \right)  & = & \nabla_{g_t} \,
    \dot{g}_t^{\ast} \, J_t \,\eta \;\,+\;\, (J_t \,\eta) \;\neg\; \nabla_{g_t} 
    \,\dot{g}_t^{\ast} \;\,-\;\, \big( \nabla_{g_t} \, \dot{g}_t^{\ast} \, J_t\, \eta
    \big)_{g_t}^T\;,\\
    &  & \\
    - \;\,2\, J_t\,  \dot{\nabla}_{g_t} \eta & = & - \;\,J_t\, \nabla_{g_t}\, 
    \dot{g}_t^{\ast} \, \eta \;\,-\;\, \eta\; \neg\; J_t\, \nabla_{g_t}\,  \dot{g}_t^{\ast}
    \;\,+\;\, J_t  \left( \nabla_{g_t}  \,\dot{g}_t^{\ast} \, \eta \right)_{g_t}^T \;.
  \end{eqnarray*}
  We deduce the equalities
  \begin{eqnarray*}
    T_t \,\eta & \assign & \left( 2\, \dot{\nabla}_{g_t} \,J_t\;\, +\;\, J_t \nabla_{g_t}\, 
    \dot{g}_t^{\ast} \;\, -\;\, \nabla_{g_t} \,
    \dot{g}_t^{\ast} \,J_t \right) \eta\\
    &  & \\
    & = & 2 \,\dot{\nabla}_{g_t} \left( J_t \,\eta \right) \;\,-\;\, 2\, J_t \,
    \dot{\nabla}_{g_t} \,\eta 
\\
\\
&+& J_t\, \nabla_{g_t}\,  \dot{g}_t^{\ast}
    \;\, - \;\, \nabla_{g_t}  \,\dot{g}_t^{\ast} \,J_t\\
    &  & \\
    & = & (J_t \,\eta) \;\neg\; \nabla_{g_t} \, \dot{g}_t^{\ast} \;\,- \;\,\left( \nabla_{g_t}\, 
    \dot{g}_t^{\ast} \, J_t\, \eta \right)_{g_t}^T 
\\
\\
&-& \eta \;\neg\; J_t\,
    \nabla_{g_t}\,  \dot{g}_t^{\ast} \;\,+\;\, J_t  \left( \nabla_{g_t}\, 
    \dot{g}_t^{\ast} \, \eta \right)_{g_t}^T \;.
  \end{eqnarray*}
  Multiplying on the l.h.s with $g_t$ we infer for all $\xi, \mu \in T_X$ the
  expression
  \begin{eqnarray*}
     g_t \big( T_t (\xi\,, \eta)\,, \mu \big) & = &  g_t \big(
    \nabla_{g_t}  \,\dot{g}_t^{\ast} (J_t \eta\,, \xi)\,, \mu \big) \;\,-\;\, g_t \big( \xi\,,
    \nabla_{g_t}  \,\dot{g}_t^{\ast} (\mu\,, J_t \eta) \big)\\
    &  & \\
    & - & g_t \big( J_t \nabla_{g_t}\,  \dot{g}_t^{\ast} (\eta\,, \xi)\,, \mu
    \big) \;\,+\;\, g_t \big( J_t  \left( \nabla_{g_t}  \dot{g}_t^{\ast} \cdot \eta
    \right)_{g_t}^T \cdot \xi\,, \mu\big)\\
    &  & \\
    & = & g_t \big( \nabla_{g_t}\,  \dot{g}_t^{\ast} (J_t \eta\,, \xi)\,,
    \mu \big) \;\,-\;\, g_t \big( \nabla_{g_t}\,  \dot{g}_t^{\ast} (\mu\,, J_t \eta)\,, \xi
    \big)\\
    &  & \\
    & + & g_t \big( \nabla_{g_t}  \dot{g}_t^{\ast} (\eta\,, \xi)\,, J_t \mu
    \big)\;\, -\;\, g_t \big( \xi\,, \nabla_{g_t} \, \dot{g}_t^{\ast}  ( J_t \mu\,,
    \eta )\big)\\
    &  & \\
    & = & \nabla_{g_t} \, \dot{g}_t (J_t \eta\,, \xi\,, \mu) \;\,-\;\, \nabla_{g_t}\, 
    \dot{g}_t (\mu\,, J_t \eta\,, \xi)\\
    &  & \\
    & + & \nabla_{g_t} \, \dot{g}_t  ( \eta\,, \xi\,, J_t \mu ) \;\,-\;\,
    \nabla_{g_t} \, \dot{g}_t  ( J_t \mu\,, \eta\,, \xi )\\
    &  & \\
    & = & \nabla_{g_t}\,  \dot{g}_t (J_t \eta\,, \xi\,, \mu) \;\,-\;\, \nabla_{g_t}\, 
    \dot{g}_t (\mu\,, \xi\,, J_t \eta)\\
    &  & \\
    & + & \nabla_{g_t} \, \dot{g}_t  ( \eta\,, \xi\,, J_t \mu )\;\, -\;\,
    \nabla_{g_t} \, \dot{g}_t  ( J_t \mu\,, \xi\,, \eta )\\
    &  & \\
    & = & 2\, g_t \Big( \nabla^{0, 1}_{g_t, J_t} \, \dot{g}^{\ast}_t (\eta\,,
    \xi)\,, J_t \mu\Big) \;\,-\;\, 2 \,g_t \left( \nabla^{0, 1}_{g_t, J_t} \, \dot{g}^{\ast}_t
    (J_t \mu\,, \xi)\,, \eta \right)\;,
  \end{eqnarray*}
  thanks to obvious symmetries of $\nabla_{g_t}  \,\dot{g}_t$. Thus the
  equation
  \[ 2 \,\dot{M}_t \;\;=\;\; M_t\,  \dot{g}_t^{\ast} \;\, -\;\,
     \dot{g}_t^{\ast}\, M_t \;\,+\;\, T_t\;, \]
  with initial condition $M_0 \;=\; 0$ (thanks to the assumption) shows that $M_t
  \equiv 0$ if and only if $T_t \;\equiv\; 0$, i.e. if and only if the endomorphism
$$
\nabla^{0, 1}_{g_t, J_t}  \,\dot{g}^{\ast}_t \cdot \xi\;,
$$ 
is
  $g$-symmetric for all $\xi \in T_X $and all $t \geqslant 0$.
\end{proof}
We deduce the following remarkable corollary.
\begin{corollary}
 \label{F-Kah}Let $(g_t)_{t \geqslant 0}$ be a smooth family of Riemannian
  metrics such that $\dot{g}_t \in \mathbbm{F}_{g_t}$ and let $(J_t)_{t
  \geqslant 0}$ be a family of endomorphism sections of $T_X$ solution of the
  $\tmop{ODE}$
  \[ 2 \,\dot{J}_t \;\;=\;\; J_t\,  \dot{g}_t^{\ast} \;\, - \;\,
     \dot{g}_t^{\ast} J_t\;, \]
  with K\"ahler initial data $(J_0, g_0)$. Then $(J_t, g_t)_{t \geqslant 0}$
  is a smooth family of K\"ahler structures.
\end{corollary}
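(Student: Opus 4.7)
The plan is to reduce the corollary directly to Lemma \ref{Kah-crv} by establishing the pointwise inclusion $\mathbbm{F}_g \subset \mathbbm{D}^J_g$. Since the second lemma of this section already guarantees that the conditions $J^2 = -\I_{T_X}$ and $J^T_g = -J$ persist along the ODE from the K\"ahler initial data, it suffices to verify this inclusion for any $(g, J)$ with an almost-complex, $g$-skew endomorphism $J$.

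Fix $v \in \mathbbm{F}_g$ and introduce the $3$-tensor
$$\Theta(\xi, \eta, \mu) \;\;\assign\;\; g\big(\nabla_g\, v_g^{\ast}(\xi, \eta)\,,\, \mu\big)\;.$$
I would first observe that $\Theta$ is totally symmetric. Symmetry in $(\eta, \mu)$ follows because $v_g^{\ast} = g^{-1} v$ is $g$-symmetric (as $v$ is) and $g$-symmetry of endomorphisms is preserved by $\nabla_{g,\xi}$. Symmetry in $(\xi, \eta)$ follows from $\nabla_{T_X, g}\, v_g^{\ast} = 0$ together with the vanishing torsion of $\nabla_g$. These two transpositions generate the full symmetric group $S_3$.

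Checking $v \in \mathbbm{D}^J_g$ amounts to showing that the defect
$$g\big(\nabla^{0,1}_{g,J} v_g^{\ast}(\eta, \xi)\,,\, \mu\big) \;\,-\;\, g\big(\eta\,,\, \nabla^{0,1}_{g,J} v_g^{\ast}(\mu, \xi)\big)$$
vanishes for all $\xi, \eta, \mu \in T_X$. Expanding via the definition of $\nabla^{0,1}_{g,J}$ splits this difference into an untwisted and a $J$-twisted half. The untwisted half equals $\tfrac{1}{2}\,[\Theta(\eta, \xi, \mu) - \Theta(\mu, \xi, \eta)]$, which vanishes by the total symmetry of $\Theta$. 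For the twisted half, I would use the $g$-skewness relation $g(Ja, b) = -g(a, Jb)$ to rewrite it as $\tfrac{1}{2}\,[-\Theta(J\eta, \xi, J\mu) + \Theta(J\mu, \xi, J\eta)]$, which again vanishes by the total symmetry of $\Theta$. Hence $\nabla^{0,1}_{g,J} v_g^{\ast} \cdot \xi$ is $g$-symmetric for every $\xi$, i.e.\ $v \in \mathbbm{D}^J_g$. Applying this at every $t \geqslant 0$ puts us in the hypothesis of Lemma \ref{Kah-crv} and finishes the argument.

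The main obstacle is nothing more than the bookkeeping in the twisted half: one has to track carefully which slot $J$ ends up in after each skew-adjunction, but once the $S_3$-symmetry of $\Theta$ is in hand the two resulting terms cancel without any further input. The conceptual core is simply the observation that the $\mathbbm{F}_g$-condition, combined with the intrinsic $g$-symmetry of $v_g^{\ast}$, upgrades $\nabla_g v_g^{\ast}$ to a totally symmetric $(0,3)$-tensor.
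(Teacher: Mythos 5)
Your proposal is correct and follows essentially the same route as the paper: the paper's proof likewise reduces to Lemma \ref{Kah-crv} by checking $\dot{g}_t \in \mathbbm{D}^{J_t}_{g_t}$, observing that the $\mathbbm{F}_{g_t}$-condition makes $\nabla_{g_t}\dot{g}_t$ totally symmetric and hence the twisted $2$-tensor $\nabla_{g_t}\dot{g}_t(u,\xi,v) - \nabla_{g_t}\dot{g}_t(J_t u,\xi,J_t v)$ symmetric in $(u,v)$, with the earlier lemma guaranteeing that $J_t$ stays almost complex and $g_t$-skew so the condition makes sense. Your explicit splitting of the symmetry defect into untwisted and $J$-twisted halves is just a more detailed writing of that same cancellation, so there is nothing to add.
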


\begin{proof}
  We show that $\dot{g}_t \in \mathbbm{D}^{J_t}_{g_t}$ for all $t \geqslant 0$. 
For this purpose consider the 2-tensor $N_t^{\xi}$ defined by the formula
  \begin{eqnarray*}
    2 \,N^{\xi}_t (u, v) & \assign & 2 \,g_t \left( \nabla_{g_t, J_t}^{1, 0} \,
    \dot{g}_t^{\ast} (u\,, \xi)\,, v \right) \\
    &  & \\
    & = & g_t \Big( \nabla_{g_t} \, \dot{g}_t^{\ast} (u\,, \xi)\,, v
    \Big) \;\,-\;\, g_t \Big( \nabla_{g_t} \, \dot{g}_t^{\ast} (J_t u\,, \xi)\,, J_t \,v \Big)
    \\
    &  & \\
    & = & \nabla_{g_t} \, \dot{g}_t \,(u\,, \xi\,, v) \;\,-\;\, \nabla_{g_t}\,  \dot{g}_t \,(J_t
    u\,, \xi\,, J_t \,v) \;.
  \end{eqnarray*}
  This last expression shows that $N_t^{\xi}$ is symmetric for all $t
  \geqslant 0$ and all $\xi \in T_X$. In fact the 3-tensor $\nabla_{g_t} 
  \dot{g}_t$ is symmetric for all $t \geqslant 0$ since $\dot{g}_t \in
  \mathbbm{F}_{g_t}$ by assumption. \ 
\end{proof}

We give now a better description of the vector space $\mathbbm{D}_g^J$ in the
case $(J, g) \in \mathcal{K}\mathcal{S}.$ We start with the following quite
elementary fact.

\begin{lemma}
  \label{dec-sm-cx}Let $(J, g) \in \mathcal{K}\mathcal{S}$. Then for all $v
  \in \mathbbm{D}^J_g$ and $\xi \in T_X$ the endomorphisms
$$\nabla^{0, 1}_{g, J} \hspace{0.25em}
  (v_g^{\ast})_{_J}^{1, 0} \cdot \xi\;,
$$ 
and 
$$\nabla^{0, 1}_{g, J}
  \hspace{0.25em} (v_g^{\ast})_{_J}^{0, 1} \cdot \xi\;,
$$ 
are $g$-symmetric.
\end{lemma}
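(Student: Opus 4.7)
The strategy is to split $v_g^{\ast} = A + B$ with $A \assign (v_g^{\ast})_{_J}^{1,0}$ (the $J$-linear part) and $B \assign (v_g^{\ast})_{_J}^{0,1}$ (the $J$-antilinear part), and to decouple the two contributions to the endomorphism $\nabla^{0,1}_{g,J} \,v_g^{\ast} \cdot \xi$ by testing the defining condition of $\mathbbm{D}^J_g$ separately at $\xi$ and at $J \xi$.

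The first step is to exploit the K\"ahler hypothesis $\nabla_g J = 0$: it implies that $\nabla_{\zeta} A$ remains $J$-linear and $\nabla_{\zeta} B$ remains $J$-antilinear for every $\zeta \in T_X$. In particular $(\nabla_{\zeta} A)(J \xi) = J (\nabla_{\zeta} A)(\xi)$ and $(\nabla_{\zeta} B)(J \xi) = -\, J (\nabla_{\zeta} B)(\xi)$, and plugging these into the definition of $\nabla^{0,1}_{g,J}$ gives by direct computation the transformation rules
\begin{eqnarray*}
\nabla^{0,1}_{g,J} A \cdot J \xi & = & J\, \nabla^{0,1}_{g,J} A \cdot \xi\;, \\
\nabla^{0,1}_{g,J} B \cdot J \xi & = & -\, J\, \nabla^{0,1}_{g,J} B \cdot \xi\;,
\end{eqnarray*}
as endomorphisms of $T_X$.

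Setting $P \assign \nabla^{0,1}_{g,J} A \cdot \xi$ and $Q \assign \nabla^{0,1}_{g,J} B \cdot \xi$, the assumption $v \in \mathbbm{D}^J_g$ applied in the direction $\xi$ says that $P + Q$ is $g$-symmetric, while applied in the direction $J \xi$, together with the transformation rules above, it says that $J (P - Q)$ is $g$-symmetric. The endomorphisms $P$ and $Q$ are automatically $J$-antilinear in their argument (a general feature of any $\nabla^{0,1}_{g,J}$-projection), and the compatibility $g = J^{\ast} g J$ yields $J^T_g = -\, J$, which combined with $(AB)^T_g = B^T_g A^T_g$ shows that for any $J$-antilinear endomorphism $T$ the equivalence $(JT)^T_g = JT$ holds if and only if $T^T_g = T$. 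Hence $P - Q$ is $g$-symmetric as well, and taking the sum and difference with $P + Q$ gives the $g$-symmetry of $P$ and $Q$ individually, which is the claim.

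The only computationally nontrivial step is the verification of the two transformation rules above; this is short but must be executed carefully using $\nabla_g J = 0$ and the $J$-types of $A$ and $B$. Everything else is formal bookkeeping with the $J$-type decomposition and the behavior of the $g$-transpose.
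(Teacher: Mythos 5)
Your argument is correct and is essentially the paper's own proof in a slightly rearranged form: your two transformation rules for $\nabla^{0,1}_{g,J}A\cdot J\xi$ and $\nabla^{0,1}_{g,J}B\cdot J\xi$ are algebraically equivalent to the paper's decomposition $2\,\nabla^{0,1}_{g,J}(v_g^{\ast})^{1,0}_{_J}\cdot\xi=\nabla^{0,1}_{g,J}v_g^{\ast}\cdot\xi-J\,\nabla^{0,1}_{g,J}v_g^{\ast}\cdot J\xi$, and both proofs rest on the same three ingredients: $\nabla_g J=0$, the $J$-anti-linearity of $\zeta\mapsto\nabla^{0,1}_{g,J}S(\zeta,\xi)$, and the symmetry hypothesis of $\mathbbm{D}^J_g$ applied at $\xi$ and at $J\xi$. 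The only difference is bookkeeping (you split $v_g^{\ast}=A+B$ first and solve for $P$, $Q$ by sum and difference, while the paper shows each term of its decomposition is $g$-symmetric directly), so there is nothing to add.
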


\begin{proof}
  We observe first the decomposition formula
  \begin{eqnarray*}
    2 \,\nabla^{0, 1}_{g, J} \hspace{0.25em} (v_g^{\ast})_{_J}^{1, 0} \cdot \xi
    & = & \nabla^{0, 1}_{g, J} \hspace{0.25em} v_g^{\ast} \cdot \xi \;\,-\;\, J\,
    \nabla^{0, 1}_{g, J} \hspace{0.25em} v_g^{\ast} \cdot J \,\xi \;.
  \end{eqnarray*}
  We observe also that the last endomorphism on the r.h.s is $g$-symmetric. In
  fact
  \begin{eqnarray*}
    (J \,\nabla^{0, 1}_{g, J} \hspace{0.25em} v_g^{\ast} \cdot J \,\xi)_g^T & = &
    - \;\,\nabla^{0, 1}_{g, J} \hspace{0.25em} v_g^{\ast} \,(J \cdot\,, J\, \xi) 
\\
\\
&=& J\,
    \nabla^{0, 1}_{g, J} \hspace{0.25em} v_g^{\ast} \cdot J\, \xi \;.
  \end{eqnarray*}
  We infer that the endomorphism 
$$
\nabla^{0, 1}_{g, J} \hspace{0.25em}
  (v_g^{\ast})_{_J}^{1, 0} \cdot \xi\;,
$$ 
is also $g$-symmetric. In a similar way
  we deduce that the endomorphism 
$$
\nabla^{0, 1}_{g, J} \hspace{0.25em}
  (v_g^{\ast})_{_J}^{0, 1} \cdot \xi\;,
$$ 
is also $g$-symmetric. 
\end{proof}

We compute now the tangent bundle to the space of (integrable) complex
structures. This result is well known. We include it here for the sake of
completeness.

\begin{lemma}
  \label{Tg-cx-str}Let $(J_t)_t \subset \mathcal{J}$ be a smooth family of
  complex structures. Then hold the identity 
$$
\overline{\partial}_{_{T_{X, J_t}}} \dot{J}_t
  \;\;\equiv\;\; 0\; .
$$ 
Moreover any smooth path $(J_t, g_t)_t \subset
  \mathcal{K}\mathcal{S}$ satisfies the property 
$$
\nabla_{g_t, J_t}^{0, 1}  \dot{J}_t \,\in\,
  S_{_{\mathbbm{R}}}^2 T^{\ast}_X \otimes T_X \;.
$$ 
\end{lemma}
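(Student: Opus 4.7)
My plan separates the two parts of the lemma, which can be established by essentially independent arguments.

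For the first part I would start from the characterization of integrability as the vanishing of the Nijenhuis tensor,
$$N_{J_t}(X, Y) \;=\; [J_t X,\, J_t Y] \;-\; J_t[J_t X,\, Y] \;-\; J_t[X,\, J_t Y] \;-\; [X, Y] \;=\; 0,$$
for all time-independent real vector fields $X, Y$. Differentiating in $t$ produces a six-term linearized identity in $\dot{J}_t$. Since $\dot{J}_t$ anticommutes with $J_t$ (differentiating $J_t^2 = -\mathbb{I}$), it exchanges $T^{1,0}_{X,J_t}$ and $T^{0,1}_{X,J_t}$ and therefore corresponds canonically to a $T^{1,0}_{X,J_t}$-valued $(0,1)$-form (together with its conjugate). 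Evaluating the linearized identity on pure $(0,1)$-vectors $\bar{X}, \bar{Y}$, and using the integrability of $J_t$ to decompose the resulting Lie brackets into their $(1,0)$ and $(0,1)$ parts, I can identify the expression with $\bar{\partial}_{T_{X,J_t}} \dot{J}_t(\bar{X}, \bar{Y})$ up to a nonzero scalar, which yields the desired vanishing.

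For the second part, only the Kähler condition $\nabla_{g_t} J_t = 0$ is needed; Part~1 is not invoked. Differentiating this identity in $t$ yields
$$(\nabla_{g_t, \xi} \dot{J}_t)\,\eta \;=\; J_t\, \dot{\nabla}_{g_t}(\xi, \eta) \;-\; \dot{\nabla}_{g_t}(\xi, J_t \eta),$$
where $\dot{\nabla}_{g_t}$ is the first variation of the Levi-Civita connection given by (\ref{Bess-var-LC-conn}), and in particular is symmetric in its two lower arguments. Substituting into the definition of $\nabla^{0,1}_{g_t, J_t}$ then gives
$$2\,\nabla^{0,1}_{g_t, J_t} \dot{J}_t(\xi, \eta) \;=\; J_t\, \dot{\nabla}_{g_t}(\xi, \eta) \,-\, \dot{\nabla}_{g_t}(\xi, J_t \eta) \,-\, \dot{\nabla}_{g_t}(J_t \xi, \eta) \,-\, J_t\, \dot{\nabla}_{g_t}(J_t \xi, J_t \eta).$$
By the symmetry of $\dot{\nabla}_{g_t}$, the first and last terms on the right are each manifestly invariant under $\xi \leftrightarrow \eta$, while the two middle terms are interchanged with one another under this swap. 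This establishes $\nabla^{0,1}_{g_t, J_t} \dot{J}_t \in S_{_{\mathbb{R}}}^2 T^{\ast}_X \otimes T_X$.

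The main obstacle lies in the first part: translating the linearized Nijenhuis identity into the complex statement $\bar{\partial}_{T_{X,J_t}} \dot{J}_t = 0$ requires careful bookkeeping on the type-decompositions of real vector fields and their Lie brackets, together with the standard identification of $\dot{J}_t$ with a $T^{1,0}_{X,J_t}$-valued $(0,1)$-form. By contrast, the second part reduces to a routine symmetry check once the Besse formula for $\dot{\nabla}_{g_t}$ has been applied.
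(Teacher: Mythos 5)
Your argument is correct, but its two halves relate to the paper differently. For the first statement, your linearization of the Nijenhuis tensor is essentially the paper's own argument in another guise: the paper time-differentiates the integrability condition $[\xi^{0,1}_{J_t},\eta^{0,1}_{J_t}]^{1,0}_{J_t}\equiv 0$ directly and uses the $J_t$-anti-linearity of $\dot J_t$ (your anticommutation observation, via $(\dot J_t\xi)^{1,0}_{J_t}=\dot J_t\,\xi^{0,1}_{J_t}$) to recognize the resulting expression as $\overline{\partial}_{T_{X,J_t}}\dot J_t=0$; differentiating $N_{J_t}=0$ instead is only a repackaging, and your ``up to a nonzero scalar'' bookkeeping is exactly the step the paper carries out explicitly. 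For the second statement you genuinely deviate: the paper deduces it as an immediate corollary of the first part, through the identity $\overline{\partial}_{T_{X,J_t}}\dot J_t(\xi,\eta)=\nabla^{0,1}_{g_t,J_t}\dot J_t(\xi,\eta)-\nabla^{0,1}_{g_t,J_t}\dot J_t(\eta,\xi)$, valid because $\nabla_{g_t}J_t=0$, so the symmetry of $\nabla^{0,1}_{g_t,J_t}\dot J_t$ is precisely the vanishing of its antisymmetrization. You instead differentiate the parallelism condition $\nabla_{g_t}J_t=0$ in time, obtaining $(\nabla_{g_t,\xi}\dot J_t)\eta=J_t\,\dot\nabla_{g_t}(\xi,\eta)-\dot\nabla_{g_t}(\xi,J_t\eta)$, and conclude from the symmetry of $\dot\nabla_{g_t}$ furnished by (\ref{Bess-var-LC-conn}); I checked your four-term expansion and the $\xi\leftrightarrow\eta$ bookkeeping, and it is right. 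Your route buys self-containedness --- it never invokes integrability or the first half, only $\nabla_{g_t}J_t=0$ and the Besse variation formula --- at the cost of an extra computation; the paper's route is shorter once part one is in hand and makes transparent that the symmetry of $\nabla^{0,1}_{g_t,J_t}\dot J_t$ is nothing but the $\overline{\partial}$-closedness of $\dot J_t$ read through the K\"ahler identification of $\overline{\partial}_{T_{X,J}}$ with the antisymmetrized $\nabla^{0,1}_{g,J}$.
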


\begin{proof}
  We write first the expression
  \begin{eqnarray*}
    \overline{\partial}_{_{T^{1, 0}_{X, J_t}}} \dot{J}_t  \left(
    \xi_{_{J_t}}^{0, 1}, \eta_{_{J_t}}^{0, 1} \right) & = & \left[
    \xi_{_{J_t}}^{0, 1}, \dot{J}_t \,\eta_{_{J_t}}^{0, 1} \right]_{_{J_t}}^{1,
    0} 
\\
\\
&-& \left[ \eta_{_{J_t}}^{0, 1}, \dot{J}_t \,\xi_{_{J_t}}^{0, 1}
    \right]_{_{J_t}}^{1, 0} \;\,-\;\, \dot{J}_t  \left[ \xi_{_{J_t}}^{0, 1},
    \eta_{_{J_t}}^{0, 1} \right]_{_{J_t}}^{0, 1},
  \end{eqnarray*}
  for all $\xi, \eta \in C^{\infty} (U, T_X)$ over an arbitrary open set $U \subset X$.
  Time deriving the integrability condition
  \begin{eqnarray*}
    \left[ \xi_{_{J_t}}^{0, 1}, \eta_{_{J_t}}^{0, 1} \right]_{_{J_t}}^{1, 0} &
    \equiv & 0\;,
  \end{eqnarray*}
  we obtain the equalities
  \begin{eqnarray*}
    0 & = & - \;\,\dot{J}_t  \left[ \xi_{_{J_t}}^{0, 1}, \eta_{_{J_t}}^{0, 1}
    \right] \;\,+\;\, \left[ \dot{J}_t \,\xi\,, \eta_{_{J_t}}^{0, 1} \right]_{_{J_t}}^{1,
    0} \;\,+\;\, \left[ \xi_{_{J_t}}^{0, 1}, \dot{J}_t \,\eta \right]_{_{J_t}}^{1, 0}\\
    &  & \\
    & = & - \;\,\dot{J}_t  \left[ \xi_{_{J_t}}^{0, 1}, \eta_{_{J_t}}^{0, 1}
    \right]_{_{J_t}}^{0, 1} \;\,-\;\, \left[ \eta_{_{J_t}}^{0, 1}, \big( \dot{J}_t\,
    \xi \big)_{_{J_t}}^{1, 0} \right]_{_{J_t}}^{1, 0} \;\,+\;\, \left[
    \xi_{_{J_t}}^{0, 1}, \big( \dot{J}_t \,\eta \big)_{_{J_t}}^{1, 0}
    \right]_{_{J_t}}^{1, 0},
  \end{eqnarray*}
  thanks again to the integrability condition of the complex structure $J_t$. Then the identity
$$
\overline{\partial}_{_{T^{1, 0}_{X, J_t}}} \dot{J}_t \;\;\equiv\;\; 0\;,
$$ 
follows from
  the equality 
  \begin{eqnarray*}
\big( \dot{J}_t \,\xi \big)_{_{J_t}}^{1, 0} & = & \dot{J}_t\,\xi_{_{J_t}}^{0, 1}\;,
  \end{eqnarray*}
  thanks to the $J_t$-anti-linearity of the endomorphism $\dot{J}_t$. Thus for any smooth path $(J_t, g_t)_t \subset
  \mathcal{K}\mathcal{S}$, the identity
\begin{eqnarray*}
    0 \;\;\equiv \;\;\overline{\partial}_{_{T_{X, J_t}}}  \dot{J}_t \,(\xi\,, \eta) & = &
    \nabla_{g_t, J_t}^{0, 1}  \dot{J}_t \,(\xi\,, \eta) \;\,-\;\, \nabla_{g_t, J_t}^{0, 1}
    \dot{J}_t \,(\eta\,, \xi)\;,
  \end{eqnarray*}
  implies the required conclusion.
\end{proof}
The following characterization will be quite crucial for the proof of the main theorem.
\begin{lemma}
  \label{rmq-kh-sm}Let $(J, g) \in \mathcal{K}\mathcal{S}$ and let $A \in
  C^{\infty} (X, T^{\ast}_{X, - J} \otimes_{_{\mathbbm{C}}} T_{X, J})$ be
  $g$-symmetric. Then the conditions
\begin{eqnarray}
\label{cx-sm-A3} \nabla_{g, J}^{0, 1}\, A \cdot \xi 
&=&
 \left( \nabla_{g,
    J}^{0, 1} \,A \cdot \xi \right)_g^T\;, 
\\\nonumber
\\
\label{cx-sm-A4}  \nabla_{g, J}^{0, 1} \,A 
&\in&
S_{_{\mathbbm{R}}}^2
    T^{\ast}_X \otimes T_X \;, 
\\\nonumber
\\
\label{cx-sm-A5}  \overline{\partial}_{_{T_{X, J}}} A 
&\equiv& 0\;,
\end{eqnarray}
  are equivalent. Moreover if $B \in C^{\infty} (X, T^{\ast}_{X, J}
  \otimes_{_{\mathbbm{C}}} T_{X, J})$ is $g$-symmetric then the conditions
\begin{eqnarray}
\label{Kh-sm-B2} \nabla_{g, J}^{0, 1} \,B \cdot \xi 
&=&
 \left( \nabla_{g,
    J}^{0, 1} \,B \cdot \xi \right)_g^T\;,
\\\nonumber
\\
\label{Kh-sm-B3} \nabla_{g, J}^{1, 0} \,B 
&\in&
S_{_{\mathbbm{R}}}^2
    T^{\ast}_X \otimes T_X\;, 
\\\nonumber
\\
\label{Kh-sm-B4} \partial^g_{_{T_{X, J}}} B 
&\equiv& 0\;,
\end{eqnarray}
  are equivalent.
\end{lemma}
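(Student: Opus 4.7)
The plan is to recast each of the conditions as a symmetry property of the $3$-tensor $g\bigl(\nabla^{0, 1}_{g, J} A(\cdot, \cdot), \cdot\bigr)$ (respectively $g\bigl(\nabla^{0, 1}_{g, J} B(\cdot, \cdot), \cdot\bigr)$) and to pass between these using elementary $S_3$-combinatorics together with the standard K\"ahler identities
\[ \overline{\partial}_{_{T_{X, J}}} A \;\;=\;\; \text{antisym.\ in form slots of } \nabla^{0, 1}_{g, J} A, \qquad \partial^g_{_{T_{X, J}}} B \;\;=\;\; \text{antisym.\ in form slots of } \nabla^{1, 0}_{g, J} B, \]
which hold because Chern and Levi-Civita coincide on $T_{X, J}$ for $(J, g) \in \mathcal{KS}$.

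For the $A$-case, covariantly differentiating the $g$-symmetry $g(A\xi, \mu) = g(A\mu, \xi)$ and using $\nabla_g g = 0$ shows that the raw $3$-tensor $g\bigl(\nabla_g A(\zeta, \xi), \mu\bigr)$ is symmetric in slots $2, 3$. A short computation shows that this symmetry is inherited by $g\bigl(\nabla^{0, 1}_{g, J} A(\zeta, \xi), \mu\bigr)$: the only obstruction lies in cross-terms involving the $J$'s in the $(0, 1)$-projector, and these cancel precisely because $A$ is $J$-anti-linear, via the elementary identity $g\bigl(\nabla_g A(\zeta, J\xi), \mu\bigr) = g\bigl(\nabla_g A(\zeta, \xi), J\mu\bigr)$. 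At this point condition (\ref{cx-sm-A3}) is the symmetry in slots $1, 3$ and (\ref{cx-sm-A4}) is the symmetry in slots $1, 2$ of the tensor $g\bigl(\nabla^{0, 1}_{g, J} A(\cdot, \cdot), \cdot\bigr)$. Given its automatic symmetry in slots $2, 3$, either of these generates the full $S_3$-action on the three slots, so the two conditions are equivalent. The equivalence with (\ref{cx-sm-A5}) is immediate from the K\"ahler identity for $\overline{\partial}_{_{T_{X, J}}}$ above, since (\ref{cx-sm-A4}) is exactly the vanishing of the antisymmetrization.

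For the $B$-case the analogous symmetry in slots $2, 3$ of $g\bigl(\nabla^{0, 1}_{g, J} B(\zeta, \xi), \mu\bigr)$ \emph{fails}, because the $J$-linearity (rather than anti-linearity) of $B$ flips the crucial sign in the cancellation above. Instead, the same manipulation yields the key identity
\[ g\bigl(\nabla^{0, 1}_{g, J} B(\zeta, \mu), \xi\bigr) \;\;=\;\; g\bigl(\nabla^{1, 0}_{g, J} B(\zeta, \xi), \mu\bigr) , \]
valid for any $J$-linear, $g$-symmetric $B$. Applying this identity converts (\ref{Kh-sm-B2}) --- the symmetry in slots $1, 3$ of $g\bigl(\nabla^{0, 1}_{g, J} B(\cdot, \cdot), \cdot\bigr)$ --- into the symmetry in slots $1, 2$ of $g\bigl(\nabla^{1, 0}_{g, J} B(\cdot, \cdot), \cdot\bigr)$, which is exactly (\ref{Kh-sm-B3}). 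Finally (\ref{Kh-sm-B4}) $\Leftrightarrow$ (\ref{Kh-sm-B3}) follows from the $\partial^g_{_{T_{X, J}}}$ K\"ahler identity above.

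The main obstacle I expect is the sign bookkeeping needed to verify the key identity $g\bigl(\nabla^{0, 1}_{g, J} B(\zeta, \mu), \xi\bigr) = g\bigl(\nabla^{1, 0}_{g, J} B(\zeta, \xi), \mu\bigr)$: it is a delicate three-way interaction between the $g$-symmetry of $B$, the $J$-linearity of $B$, and the $J$'s appearing in the $(1, 0)$- and $(0, 1)$-projections of the Levi-Civita connection. Once this identity is in place the rest is standard: pure $3$-tensor $S_3$-combinatorics together with the two vector-bundle $d$-K\"ahler identities for $\overline{\partial}_{_{T_{X, J}}}$ and $\partial^g_{_{T_{X, J}}}$.
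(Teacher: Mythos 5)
Your proposal is correct and follows essentially the same route as the paper: the automatic symmetry you derive from differentiating the $g$-symmetry of $A$ is the paper's identity (\ref{cx-sm-A2}), your ``key identity'' for the $J$-linear case is exactly the paper's (\ref{Kh-sm-B1}), and the rest is the same $S_3$ slot-bookkeeping plus the antisymmetrization formulas for $\overline{\partial}_{_{T_{X, J}}}$ and $\partial^g_{_{T_{X, J}}}$. The only (cosmetic) difference is that you obtain the equivalence (\ref{Kh-sm-B2})$\Leftrightarrow$(\ref{Kh-sm-B3}) in one stroke by viewing (\ref{Kh-sm-B1}) as a transposition of slots, where the paper writes out two chains of equalities.
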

\begin{proof}
  We observe first that the identities
\begin{eqnarray}
\label{cx-sm-A1} \xi \;\neg \;\nabla_{g, J}^{1, 0} \,A 
&=&
 \left( \xi \;\neg\;
    \nabla_{g, J}^{1, 0} \,A \right)_g^T\;, 
\\\nonumber
\\
\label{cx-sm-A2} \xi \;\neg\; \nabla_{g, J}^{0, 1} \,A 
&=&
\left( \xi \;\neg\;
    \nabla_{g, J}^{0, 1} \,A \right)_g^T\;,
\end{eqnarray}
are direct consequence of the identity $A = A_g^T .$ Using (\ref{cx-sm-A2})
  we show that (\ref{cx-sm-A3}) implies (\ref{cx-sm-A4}). In fact for all
  $\xi, \eta, \mu \in T_X$ hold the equalities
  \begin{eqnarray*}
    g \left( \nabla_{g, J}^{0, 1} \,A (\xi\,, \eta)\,, \mu \right) & = & g \left(
    \eta\,, \nabla_{g, J}^{0, 1} \,A (\xi\,, \mu) \right)\\
    &  & \\
    & = & g \left( \nabla_{g, J}^{0, 1} \,A (\eta\,, \mu)\,, \xi \right)\\
    &  & \\
    & = & g \left( \mu\,, \nabla_{g, J}^{0, 1} \,A (\eta\,, \xi) \right) \;.
  \end{eqnarray*}
  We observe now that (\ref{cx-sm-A2}) and (\ref{cx-sm-A4}) imply directly
  (\ref{cx-sm-A3}). We observe also that the identity (\ref{cx-sm-A4}) is
  equivalent to (\ref{cx-sm-A5}) \ thanks to the formula
  \begin{eqnarray*}
    \overline{\partial}_{_{T_{X, J}}} A \,(\xi\,, \eta) & = & \nabla_{g, J}^{0, 1}
    \,A \,(\xi\,, \eta) \;\,-\;\, \nabla_{g, J}^{0, 1} \,A\, (\eta\,, \xi) \;.
  \end{eqnarray*}
  We show now the statement concerning the $J$-linear endomorphism $B .$ We
  observe first that the identity
  \begin{equation}
    \label{Kh-sm-B1} \xi \;\neg\; \nabla_{g, J}^{1, 0} \,B \;\;=\;\; \left( \xi \;\neg\;
    \nabla_{g, J}^{0, 1}\, B \right)_g^T\;,
  \end{equation}
  follows immediately from the symmetry identity $B = B_g^T$. We show that
  (\ref{Kh-sm-B3}) follows from (\ref{Kh-sm-B1}) and (\ref{Kh-sm-B2}). In fact
  for all $\xi, \eta, \mu \in T_X$ hold the equalities
  \begin{eqnarray*}
    g \left( \nabla_{g, J}^{1, 0} \,B (\xi\,, \eta)\,, \mu \right) & = & g \left(
    \eta\,, \nabla_{g, J}^{0, 1} \,B (\xi\,, \mu) \right)\\
    &  & \\
    & = & g \left( \nabla_{g, J}^{0, 1} \,B (\eta\,, \mu)\,, \xi \right)\\
    &  & \\
    & = & g \left( \mu\,, \nabla_{g, J}^{1, 0} \,B (\eta\,, \xi) \right)\;,
  \end{eqnarray*}
which is equivalent to the identity
  (\ref{Kh-sm-B3}). We show now that (\ref{Kh-sm-B3}) implies
  (\ref{Kh-sm-B2}). In fact
  \begin{eqnarray*}
    g \left( \nabla_{g, J}^{0, 1} \,B (\eta\,, \xi)\,, \mu \right) & = & g \left(
    \xi\,, \nabla_{g, J}^{1, 0} \,B (\eta\,, \mu) \right)\\
    &  & \\
    & = & g \left( \xi\,, \nabla_{g, J}^{1, 0} \,B (\mu\,, \eta) \right)\\
    &  & \\
    & = & g \left( \nabla_{g, J}^{0, 1} \,B (\mu\,, \xi)\,, \eta \right) \;.
  \end{eqnarray*}
  Finally we observe that the identity (\ref{Kh-sm-B3}) is equivalent to
  (\ref{Kh-sm-B4}) thanks to the formula
  \begin{eqnarray*}
    \partial^g_{_{T_{X, J}}} B \,(\xi\,, \eta) & = & \nabla_{g, J}^{1, 0} \,B \,(\xi\,,
    \eta) \;\,-\;\, \nabla_{g, J}^{1, 0}\, B (\eta\,, \xi)\; .
  \end{eqnarray*}
\end{proof}
We observe in conclusion that for any K\"ahler structure $(J, g)$ hold the
equalities
\begin{equation}
  \label{Kah-D} \mathbbm{D}^J_g \;=\; \Big\{ v \in C^{\infty} \left( X,
  S_{_{\mathbbm{R}}}^2 T^{\ast}_X \right) \mid \hspace{0.25em}
  \partial^g_{_{T_{X, J}}} (v_g^{\ast})_{_J}^{1, 0} \;=\; 0\;,\;
  \overline{\partial}_{_{T_{X, J}}} (v_g^{\ast})_{_J}^{0, 1} \;= \;0 \Big\}\; .
\end{equation}
and
\begin{equation}
  \label{Kah-F} \mathbbm{F}_g \;=\; \Big\{ v \in \mathbbm{D}^J_g \mid \hspace{0.25em} 
  \overline{\partial}_{_{T_{X, J}}} (v_g^{\ast})_{_J}^{1, 0} \;=\; -\;\,
  \partial^g_{_{T_{X, J}}} (v_g^{\ast})_{_J}^{0, 1}  \Big\}\; ,
\end{equation}
for bi-degree reasons.
Comparing the previous $(1, 1)$-forms by means of $g$-geodesic and
$J$-holomorphic coordinates we infer also the identity
\begin{equation}
  \label{sup-Kh-sm} \mathbbm{F}_g \;=\; \Big\{ v \in \mathbbm{D}^J_g \mid
  \hspace{0.25em} \xi \;\neg\; \nabla_{g, J}^{0, 1} (v_g^{\ast})_{_J}^{1, 0} \;=\;
  \nabla_{g, J}^{1, 0} (v_g^{\ast})_{_J}^{0, 1} \cdot \xi\,,\; \forall \xi \in T_X
  \Big\}\; .
\end{equation}
We denote by $\mathcal{K}_{_{^J}} \subset \mathcal{M}$ the closed subset of $J$-invariant K\"ahler metrics. Its tangent space at a point $g \in
\mathcal{K}_{_{^J}}$ is
\begin{eqnarray*}
  \hat{\mathbbm{D}}^J_g & = & \Big\{ v \in C^{\infty} \left( X,
  S_{_{\mathbbm{R}}}^2 T^{\ast}_X \right) \mid \hspace{0.25em} v \;=\; J^{\ast} v\,
  J\,,\;\; d (v \,J)\; =\; 0 \Big\}\; .
\end{eqnarray*}
We observe also that
\begin{eqnarray*}
  \hat{\mathbbm{D}}^J_g & = & \Big\{ v \in C^{\infty} \left( X,
  S_{_{\mathbbm{R}}}^2 T^{\ast}_X \right) \mid \hspace{0.25em} v \;=\; J^{\ast} v\,
  J\,,\;\; \partial^g_{_{T_{X, J}}} v_g^{\ast} \;=\; 0 \Big\} \;.
\end{eqnarray*}
In fact the condition $d (v \,J)\; = \;0$ is equivalent to the condition
$\partial_{_J} (v \,J)\; =\; 0$, which in its turn is equivalent to the condition
$$
\partial^g_{_{T_{X, J}}} v_g^{\ast} \;\;=\;\; 0\;.
$$
Moreover hold the equalities
\begin{eqnarray*}
  \hat{\mathbbm{D}}^J_g \cap \mathbbm{F}_g & = & \Big\{ v \in C^{\infty} \left( X,
  S_{_{\mathbbm{R}}}^2 T^{\ast}_X \right) \mid \hspace{0.25em} v \,=\, J^{\ast} v\,
  J\,,\, \partial^g_{_{T_{X, J}}} v_g^{\ast} \,=\, 0\,,\, \overline{\partial}_{_{T_{X,
  J}}} v_g^{\ast} \,=\, 0  \Big\} \\
  &  & \\
  & = & \Big\{ v \in C^{\infty} \left( X, S_{_{\mathbbm{R}}}^2 T^{\ast}_X \right)
  \mid \hspace{0.25em} v \;=\; J^{\ast} v\, J\;, \nabla_g \,v \;=\; 0 \Big\}\;,
\end{eqnarray*}
thanks to the $\partial \overline{\partial}$-lemma applied to the $d$-closed
$(1, 1)$-form $v J .$ We notice that the last identity is also reflected
from a comparison of lemma \ref{Kah-vr-W-fxcx} with corollary
\ref{cor-II-Var-W}.
\section{The total second variation of $\mathcal{W}$ along K\"ahler
structures}

We are now in position to show our main result, theorem \ref{lm-Kah-IIvr-W}.

\begin{proof}
  Our proof is based on the decomposition of the general second variation
  formula (\ref{rm-2vr-W}) via the symmetries explained in lemma \ref{rmq-kh-sm}, which hold 
thanks to lemmas \ref{Kah-crv} and \ref{dec-sm-cx} in the previous section. For notation simplicity we set 
$$
A \;\;\assign\;\; -\;\, (v_{_J}'')^{\ast} \;\;=\;\; -\;\,
  (v_g^{\ast})_{_J}^{0, 1}\;,
$$ 
and 
$$
B \;\;\assign\;\; (v_{_J}')^{\ast} \;\;=\;\;
  (v^{\ast}_g)_{_J}^{1, 0}\,.
$$ 
We decompose first the squared norm
  \begin{eqnarray*}
    | \nabla_g \,v|^2_g & = & | \nabla_g \,v'_{_J} |^2_g \;\,+\;\, | \nabla_g\, v''_{_J}
    |^2_g\\
    &  & \\
    & = & | \nabla_g \,B|^2_g \;\,+\;\, | \nabla_g \,A|^2_g\; .
  \end{eqnarray*}
We observe in fact that for any $g$-orthonormal (real) basis $(e_k)^{2 n}_{k = 1}$ hold
  \begin{eqnarray*}
    \big\langle \nabla_g v'_{_J}\,, \nabla_g v''_{_J}  \big\rangle_g & = &
    \big\langle \nabla_{g, e_k} v'_{_J}\,, \nabla_{g, e_k} v''_{_J} 
    \big	\rangle_g\\
    &  & \\
    & = &  -\;\, \tmop{Tr}_{_{\mathbbm{R}}}  \Big[ \nabla_{g, e_k} \,B\; \nabla_{g,
    e_k} \,A \Big]\\
    &  & \\
    & = & 0 \;.
  \end{eqnarray*}
  This is because the endomorphisms $\nabla_{g, e_k} B$ and $\nabla_{g, e_k}
  A$ are respectively $J$-linear and $J$-anti linear. We decompose now the
  squared norm
  \begin{eqnarray*}
    \frac{1}{6} \hspace{0.25em} \big| \hat{\nabla}_g \,v\big|^2_g & = & 
    \frac{1}{6} \hspace{0.25em} \big| \hat{\nabla}_g \,v'_{_J} \big|^2_g 
    \;\,+ \;\, \frac{1}{3}\hspace{0.25em}\big  \langle \hat{\nabla}_g \,v'_{_J}\,,
    \hat{\nabla}_g \,v''_{_J}  \big\rangle_g \;\,+\;\, \frac{1}{6} \hspace{0.25em} \big|
    \hat{\nabla}_g \,v_{_J}'' \big|^2_g\; .
  \end{eqnarray*}
  We explicit first the therm in the middle of the r.h.s since is the most
  complicated one. Using the identity
  \begin{eqnarray*}
    0 & = & \big\langle \nabla_g \,v'_{_J}\,, \nabla_g \,v''_{_J} 
    \big\rangle_g 
\;\;=\;\; \nabla_g \,v'_{_J} (e_j, e_k, e_l) \,\nabla_g \,v''_{_J} (e_j,
    e_k, e_l),
  \end{eqnarray*}
  and the symmetry in the last two entries $k, l,$ we expand the product therm
  \begin{eqnarray*}
    \big\langle \hat{\nabla}_g \,v'_{_J}, \hat{\nabla}_g v''_{_J} 
    \big\rangle_g & = & \Big[ \nabla_g \,v'_{_J} (e_j, e_k, e_l) \,+\, \nabla_g\,
    v'_{_J} (e_k, e_j, e_l) \,+\, \nabla_g \,v'_{_J} (e_l, e_j, e_k)\Big] \times\\
    &  & \\
    & \times & \Big[ \nabla_g \,v''_{_J} (e_j, e_k, e_l) \,+\, \nabla_g\, v_{_J}'' (e_k,
    e_j, e_l) \,+\, \nabla_g\, v''_{_J} (e_l, e_j, e_k)\Big]\\
    &  & \\
    & = & 6 \,\nabla_g \,v'_{_J} (e_k, e_j, e_l)\, \nabla_g \,v''_{_J} (e_l, e_j,
    e_k)\\
    &  & \\
    & = & - \;\,6\, g \nabla_g\, B \,(e_k, e_j, e_l)\; g \nabla_g \,A \,(e_l, e_j, e_k)\\
    &  & \\
    & = & - \;\,6\, g \nabla^{1, 0}_{g, J}\, B\, (e_k, e_j, e_l)\; g \nabla^{1, 0}_{g, J}\,
    A \,(e_l, e_j, e_k)\\
    &  & \\
    & - & 6\, g \nabla^{0, 1}_{g, J} \,B \,(e_k, e_j, e_l) \;g \nabla^{1, 0}_{g, J} \,A\,
    (e_l, e_j, e_k)\\
    &  & \\
    & - & 6 \,g \nabla^{1, 0}_{g, J}\, B \,(e_k, e_j, e_l)\; g \nabla^{0, 1}_{g, J} \,A\,
    (e_l, e_j, e_k)\\
    &  & \\
    & - & 6 \,g \nabla^{0, 1}_{g, J} \,B\, (e_k, e_j, e_l) \;g \nabla^{0, 1}_{g, J} \,A\,
    (e_l, e_j, e_k)\\
    &  & \\
    & = & - \;\,6\, g \nabla^{1, 0}_{g, J}\, B\, (e_k, e_j, e_l) \;g \nabla^{1, 0}_{g, J}\,
    A\, (e_l, e_k, e_j)\qquad \tmop{by}\; ( \ref{cx-sm-A1})\\
    &  & \\
    & - & 6 \,\tmop{Tr}_{_{\mathbbm{R}}} \left[ \nabla^{1, 0}_{g, J} \,A \left(
    \nabla^{0, 1}_{g, J} \,B (\ast, e_j), e_j \right) \right]\\
    &  & \\
    & - & 6 \,\tmop{Tr}_{_{\mathbbm{R}}} \left[ \nabla^{0, 1}_{g, J}\, A \left(
    \nabla^{1, 0}_{g, J} \,B (\ast, e_j), e_j \right) \right]\\
    &  & \\
    & - & 6\, g \nabla^{0, 1}_{g, J} \,B \,(e_l, e_j, e_k)\; g \nabla^{0, 1}_{g, J} \,A
    \,(e_l, e_k, e_j) \quad\tmop{by} \;( \ref{Kh-sm-B2}) \;\tmop{and}\; ( \ref{cx-sm-A2})\\
    &  & \\
    & = & - \;\,6\, g \left( \nabla^{1, 0}_{e_k} \,B\, \nabla^{1, 0}_{e_j} \,A\, e_k\,, e_j
    \right)\\
    &  & \\
    & - & 6 \,\tmop{Tr}_{_{\mathbbm{R}}} \left[ \nabla^{0, 1}_{e_l} \,B\,
    \nabla^{0, 1}_{e_j} \,A \right] \\
    &  & \\
    & = & - \;\,6\, g \left( \nabla^{1, 0}_{e_p} \,B\, \nabla^{1, 0}_{e_r} \,A\, e_p\,, e_r
    \right) \;.
  \end{eqnarray*}
  Let $(\eta_k)^n_{k = 1} \subset T_{X, J, x_0}$ be a $g J$-orthonormal
  complex basis at an arbitrary point $x_0\in
X$ and let $(e_k)^{2 n}_{k = 1} \;=\; (\eta_k, J \eta_k)^n_{k = 1}$.
  Then hold the identity
  \begin{eqnarray*}
    \big \langle \hat{\nabla}_g v'_{_J}\,, \hat{\nabla}_g v''_{_J} 
    \big\rangle_g & = & - \;\,6 \cdot 4 \,g \left( \nabla^{1, 0}_{\eta_p} \,B\;
    \nabla^{1, 0}_{\eta_r} \,A \,\eta_p\,, \eta_r \right) \;.
  \end{eqnarray*}
  We consider now $g$-geodesic and $J$-holomorphic coordinates centered at the
  point $x_0$ and let $\zeta_k \assign \frac{\partial}{\partial z_k}$, $\eta_k
  \assign \zeta_k + \bar{\zeta}_k$. Let also 
$$
A \;\;=\;\; i\, A_{k, \bar{l}} \;
  \bar{\zeta}_l^{\ast} \otimes \zeta_k \;\,+\;\, \tmop{Conjugate}\;,
$$ 
and 
$$B \;\;=\;\; B_{k,
  \bar{l}} \;\zeta^{\ast}_k \otimes \zeta_l \;\,+\;\, \tmop{Conjugate}\;,
$$ 
be the local
  expressions of $A$ and $B$ with respect to this coordinates. By using the
  local expressions
\begin{eqnarray*}
    \nabla^{1, 0}_{\eta_r} \,A & = & i \,\partial_r \,A_{k, \bar{l}} \;
    \bar{\zeta}^{\ast}_l \otimes \zeta_k \;\,+\;\, \tmop{Conjugate}\;,\\
    &  & \\
\nabla^{1, 0}_{\eta_p} \,B & = & \partial_p \,B_{k, \bar{l}} \;\zeta^{\ast}_k
    \otimes \zeta_l \;\,+\;\, \tmop{Conjugate}\;,
  \end{eqnarray*}
  at the point $x_0$ we infer the identity
  \begin{eqnarray*}
    \nabla^{1, 0}_{\eta_p} \,B \;\nabla^{1, 0}_{\eta_r} \,A\, \eta_p & = & i\,
    \partial_r \,A_{k, \bar{p}} \;\partial_p\, B_{k, \bar{l}} \;\zeta_l \;\,+\;\,
    \tmop{Conjugate}\;,
  \end{eqnarray*}
  and thus
  \begin{eqnarray*}
    \big\langle \hat{\nabla}_g \,v'_{_J}\,, \hat{\nabla}_g \,v''_{_J} 
    \big\rangle_g & = &  -\;\, 6 \cdot 4\,\mathfrak{R}e \Big[ i\,
    \partial_r \,A_{k, \bar{p}} \;\partial_p \,B_{k, \bar{r}} \Big]\;,
  \end{eqnarray*}
  at the point $x_0 .$ Using this last equality we will show the following
  fundamental identity
  \begin{equation}
    \label{orth-der}  \big\langle \hat{\nabla}_g \,v'_{_J}\,,
    \hat{\nabla}_g \,v''_{_J}  \big\rangle_g \;\;=\;\; 6\, \left\langle \partial^{g
    }_{_{T_{X, J}}} A\,, \overline{\partial}_{_{T_{X, J}}} B \right\rangle_g \;.
  \end{equation}
  For this purpose we consider the decomposition of the scalar product at
  the point $x_0$
  \begin{eqnarray*}
    \left \langle \partial^g_{_{T_{X, J}}} A\,, \overline{\partial}_{_{T_{X,
    J}}} B \right\rangle_g & = & \left \langle \eta_r \;\neg\;
    \partial^g_{_{T_{X, J}}} A\,, \eta_r \;\neg\; \overline{\partial}_{_{T_{X, J}}}
    B \right\rangle_g \\
    &  & \\
    & + & \left\langle J \eta_r \;\neg\; \partial^g_{_{T_{X, J}}} A\,, J \eta_r
    \;\neg\; \overline{\partial}_{_{T_{X, J}}} B \right\rangle_g\\
    &  & \\
    & = & \tmop{Tr}_{_{\mathbbm{R}}} \left[ \left( \eta_r \;\neg\;
    \partial^g_{_{T_{X, J}}} A \right) \left( \eta_r \;\neg\;
    \overline{\partial}_{_{T_{X, J}}} B \right)_g^T \right]\\
    &  & \\
    & + & \tmop{Tr}_{_{\mathbbm{R}}} \left[ \left( J \eta_r \;\neg\;
    \partial^g_{_{T_{X, J}}} A \right) \left( J \eta_r \;\neg\;
    \overline{\partial}_{_{T_{X, J}}} B \right)_g^T \right] \;.
  \end{eqnarray*}
  Moreover using the local expressions \
  \begin{eqnarray*}
    \partial^g_{_{T_{X, J}}} A & = & i\, \partial_p\, A_{k, \bar{l}}  \;\left(
    \zeta_p^{\ast} \wedge \bar{\zeta}_l^{\ast} \right) \otimes \zeta_k \;\,+\;\,
    \tmop{Conjugate}\;,\quad (\tmop{at} \tmop{the} \tmop{point} x_0)\\
    &  & \\
    \overline{\partial}_{_{T_{X, J}}} B & = & - \;\,\partial_{\bar{p}} \,B_{k,
    \bar{l}}  \;\left( \zeta_k^{\ast} \wedge \bar{\zeta}_p^{\ast} \right)
    \otimes \zeta_l \;\,+\;\, \tmop{Conjugate}\;,
  \end{eqnarray*}
  we infer the identities
  \begin{eqnarray*}
    \eta_r \;\neg\; \partial^g_{_{T_{X, J}}} A & = & i\, \partial_r \,A_{k, \bar{l}} \;
    \bar{\zeta}_l^{\ast} \otimes \zeta_k \;\,-\;\, i\, \partial_l \,A_{k, \bar{r}}\;
    \zeta^{\ast}_l \otimes \zeta_k \;\,+\;\, \tmop{Conjugate}\;,\\
    &  & \\
    J \eta_r \;\neg\; \partial^{g J}_{_{T_{X, J}}} A & = & - \;\,\partial_r \,A_{k,
    \bar{l}}  \;\bar{\zeta}_l^{\ast} \otimes \zeta_k \;\,-\;\, \partial_l \,A_{k, \bar{r}}\;
    \zeta^{\ast}_l \otimes \zeta_k \;\,+\;\, \tmop{Conjugate}\;,\\
    &  & \\
    \eta_r \;\neg\; \overline{\partial}_{_{T_{X, J}}} B & = & - \;\,\partial_{\bar{k}}\,
    B_{r, \bar{l}}  \;\bar{\zeta}_k^{\ast} \otimes \zeta_l \;\,+\;\, \partial_{\bar{r}}\,
    B_{k, \bar{l}} \;\zeta^{\ast}_k \otimes \zeta_l \;\,+\;\, \tmop{Conjugate}\;,\\
    &  & \\
    J \eta_r \;\neg\; \overline{\partial}_{_{T_{X, J}}} B & = & -\;\, i\,
    \partial_{\bar{k}} \,B_{r, \bar{l}} \; \bar{\zeta}_k^{\ast} \otimes \zeta_l \;\,-\;\,
    i\, \partial_{\bar{r}} \,B_{k, \bar{l}} \;\zeta^{\ast}_k \otimes \zeta_l \;\,+\;\,
    \tmop{Conjugate} \;.
  \end{eqnarray*}
  (By conjugate we mean the complex conjugate of all therms preceding this
  word.) The fact that the endomorphism $B$ is $g$-symmetric implies the
  equality 
$$
\overline{B}_{k, \bar{l}} \;\;=\;\; B_{l, \bar{k}} \;\,+\;\, O \left( |z|^2
  \right)\;.
$$ 
Thus hold the equalities
  \begin{eqnarray*}
    \left( \eta_r \;\neg\; \overline{\partial}_{_{T_{X, J}}} B \right)_g^T & = & -
    \partial_{\bar{l}} \,B_{r, \bar{k}} \; \bar{\zeta}_k^{\ast} \otimes \zeta_l \;\,+\;\,
    \partial_r  \,\overline{B}_{l, \bar{k}} \;\zeta^{\ast}_k \otimes \zeta_l \;\,+\;\,
    \tmop{Conjugate}\;,\\
    &  & \\
    & = & - \;\,\partial_{\bar{l}} \,B_{r, \bar{k}} \; \bar{\zeta}_k^{\ast} \otimes
    \zeta_l \;\,+\;\, \partial_r \,B_{k, \bar{l}} \;\zeta^{\ast}_k \otimes \zeta_l \;\,+\;\,
    \tmop{Conjugate}\;,\\
    &  & \\
    \left( J \eta_r \;\neg\; \overline{\partial}_{_{T_{X, J}}} B \right)_g^T & = &
    - \;\,i\, \partial_{\bar{l}} \,B_{r, \bar{k}} \; \bar{\zeta}_k^{\ast} \otimes
    \zeta_l \;\,+\;\, i \,\partial_r \, \overline{B}_{l, \bar{k}} \;\zeta^{\ast}_k \otimes
    \zeta_l \;\,+\;\, \tmop{Conjugate}\;,\\
    &  & \\
    & = & - \;\,i\, \partial_{\bar{l}} \,B_{r, \bar{k}} \; \bar{\zeta}_k^{\ast} \otimes
    \zeta_l \;\,+\;\, i\, \partial_r \,B_{k, \bar{l}} \;\zeta^{\ast}_k \otimes \zeta_l \;\,+\;\,
    \tmop{Conjugate}\;,
  \end{eqnarray*}
  at the point $x_0$. At this point we obtain the equalities
\begin{eqnarray*}
&&
    \tmop{Tr}_{_{\mathbbm{R}}} \left[ \left( \eta_r \;\neg\; \partial^g_{_{T_{X,
    J}}} A \right) \left( \eta_r \;\neg\; \overline{\partial}_{_{T_{X, J}}} B
    \right)_g^T \right] 
\\
\\
& = & - \;\,i \,\partial_l \,C_{k, \bar{r}} \;\partial_r \,B_{k,
    \bar{l}} \;\,+\;\, \tmop{Conjugate}\\
    &  & \\
    & = & \tmop{Tr}_{_{\mathbbm{R}}} \left[ \left( J \eta_r \;\neg\;
    \partial^g_{_{T_{X, J}}} A \right) \left( J \eta_r \;\neg\;
    \overline{\partial}_{_{T_{X, J}}} B \right)_g^T \right]\;,
  \end{eqnarray*}
  and thus
  \begin{eqnarray*}
    \left \langle \partial^g_{_{T_{X, J}}} A\,, \overline{\partial}_{_{T_{X,
    J}}} B \right\rangle_g \;\;=\;\; -\;\, 4\,\mathfrak{R}e \Big[ i \,\partial_l \,A_{k,
    \bar{r}} \;\partial_r \,B_{k, \bar{l}} \Big]\;,
  \end{eqnarray*}
  which implies the identity (\ref{orth-der}). We expand now the squared norm
  \begin{eqnarray*}
    \frac{1}{6} \, \big| \hat{\nabla}_g \,v'_{_J} \big|^2_g & = &
    \frac{1}{2} \,\big| \nabla_g\, v'_{_J} \big|^2_g \;\,+\;\, \nabla_g \,v'_{_J} (e_k, e_j, e_l)\;
    \nabla_g \,v'_{_J} (e_l, e_j, e_k)\\
    &  & \\
    & = & \frac{1}{2}\,\big | \nabla_g \,B\big|^2_g \;\,+\;\, g \nabla_g \,B \,(e_k, e_j, e_l)\; g
    \nabla_g \,B \,(e_l, e_j, e_k) \;.
  \end{eqnarray*}
  Expanding further the therm
  \begin{eqnarray*}
&&
    g \nabla_g \,B \,(e_k, e_j, e_l) \;g\nabla_g \,B \,(e_l, e_j, e_k) 
\\
\\
& = & g
    \nabla^{1, 0}_{g, J} \,B \,(e_k, e_j, e_l)\; g \nabla^{1, 0}_{g, J} \,B \,(e_l, e_j,
    e_k)\\
    &  & \\
    & + & g \nabla^{0, 1}_{g, J} \,B\, (e_k, e_j, e_l)\; g \nabla^{1, 0}_{g, J} \,B\,
    (e_l, e_j, e_k)\\
    &  & \\
    & + & g \nabla^{1, 0}_{g, J} \,B\, (e_k, e_j, e_l)\; g \nabla^{0, 1}_{g, J}\, B\,
    (e_l, e_j, e_k)\\
    &  & \\
    & + & g \nabla^{0, 1}_{g, J}\, B\, (e_k, e_j, e_l) \;g \nabla^{0, 1}_{g, J} \,B\,
    (e_l, e_j, e_k)\\
    &  & \\
    & = & g \nabla^{1, 0}_{g, J} \,B (e_j, e_k, e_l) \;g \nabla^{1, 0}_{g, J} \,B\,
    (e_j, e_l, e_k) \qquad\tmop{by}\quad ( \ref{Kh-sm-B3})\\
    &  & \\
    & + & \tmop{Tr}_{_{\mathbbm{R}}} \left[ \nabla^{1, 0}_{g, J} \,B \left(
    \nabla^{0, 1}_{g, J} \,B (\ast, e_j), e_j \right) \right]\\
    &  & \\
    & + & \tmop{Tr}_{_{\mathbbm{R}}} \left[ \nabla^{0, 1}_{g, J} \,B \left(
    \nabla^{1, 0}_{g, J} \,B (\ast, e_j), e_j \right) \right]\\
    &  & \\
    & + & g \nabla^{0, 1}_{g, J} \,B \,(e_l, e_j, e_k)\; g \nabla^{0, 1}_{g, J} \,B\,
    (e_l, e_j, e_k) \quad\tmop{by} \quad ( \ref{Kh-sm-B2})\\
    &  & \\
    & = & \tmop{Tr}_{_{\mathbbm{R}}} \left( \nabla^{1, 0}_{g, e_j} \,B
    \right)^2 \;\,+\;\, \big| \nabla^{0, 1}_{g, J} \,B\big|^2_g\\
    &  & \\
    & = & \big| \nabla^{0, 1}_{g, J} \,B\big|^2_g\; .
  \end{eqnarray*}
  We observe in particular that by this computation follows the equalities
  \begin{eqnarray*}
   \big| \nabla^{0, 1}_{g, J} \,B\big|^2_g & = & g \nabla^{0, 1}_{g, J} \,B\, (e_k, e_j,
    e_l) \;g \nabla^{0, 1}_{g, J} \,B\, (e_l, e_j, e_k)\\
    &  & \\
    & = & g \nabla^{1, 0}_{g, J} \,B\, (e_k, e_l, e_j)\; g \nabla^{0, 1}_{g, J} \,B\,
    (e_k, e_j, e_l) \quad \tmop{by}\quad ( \ref{Kh-sm-B1}) \quad\tmop{and} \quad(
    \ref{Kh-sm-B2})\\
    &  & \\
    & = & g \nabla^{1, 0}_{g, J} \,B\, (e_k, e_l, e_j)\; g \nabla^{1, 0}_{g, J} \,B\,
    (e_k, e_l, e_j) \quad\tmop{by}\quad ( \ref{Kh-sm-B1})\\
    &  & \\
    & = & \big| \nabla^{1, 0}_{g, J}\, B\big|^2_g \;.
  \end{eqnarray*}
  We expand now the norm squared
  \begin{eqnarray*}
    \frac{1}{6} \, \big| \hat{\nabla}_g \,v''_{_J} \big|^2_g & = &
    \frac{1}{2} \,\big| \nabla_g \,v''_{_J} \big|^2_g \;\,+\;\, \nabla_g \,v_{_J}'' (e_k, e_j, e_l)
    \;\nabla_g \,v''_{_J} (e_l, e_j, e_k)\\
    &  & \\
    & = & \frac{1}{2}\, \big| \nabla_g \,A\big|^2_g \;\,+\;\, g \nabla_g \,A \,(e_k, e_j, e_l)\; g
    \nabla_g \,A\, (e_l, e_j, e_k)\; .
  \end{eqnarray*}
  In a similar way as we did for the endomorphism section $B$ we expand the therm
  \begin{eqnarray*}
&&
    g \nabla_g \,A\, (e_k, e_j, e_l) \;g \nabla_g \,A\, (e_l, e_j, e_k) 
\\
\\
& = & g
    \nabla^{1, 0}_{g, J} \,A\, (e_k, e_j, e_l)\; g \nabla^{1, 0}_{g, J} \,A\, (e_l, e_j,
    e_k)\\
    &  & \\
    & + & g \nabla^{0, 1}_{g, J} \,A\, (e_k, e_j, e_l) \;g \nabla^{1, 0}_{g, J} \,A\,
    (e_l, e_j, e_k)\\
    &  & \\
    & + & g \nabla^{1, 0}_{g, J} \,A \,(e_k, e_j, e_l) \;g \nabla^{0, 1}_{g, J} \,A\,
    (e_l, e_j, e_k)\\
    &  & \\
    & + & g \nabla^{0, 1}_{g, J} \,A (e_k, e_j, e_l)\; g \nabla^{0, 1}_{g, J} \,A\,
    (e_l, e_j, e_k)\\
    &  & \\
    & = & \tmop{Tr}_{_{\mathbbm{R}}} \left[ \nabla^{1, 0}_{g, J} \,A \left(
    \nabla^{1, 0}_{g, J} \,A (\ast, e_j), e_j \right) \right]\\
    &  & \\
    & + & \tmop{Tr}_{_{\mathbbm{R}}} \left[ \nabla^{1, 0}_{g, J} \,A \left(
    \nabla^{0, 1}_{g, J} \,A (\ast, e_j), e_j \right) \right]\\
    &  & \\
    & + & \tmop{Tr}_{_{\mathbbm{R}}} \left[ \nabla^{0, 1}_{g, J} \,A \left(
    \nabla^{1, 0}_{g, J} \,A (\ast, e_j), e_j \right) \right]\\
    &  & \\
    & + & g \nabla^{0, 1}_{g, J} \,A \,(e_j, e_k, e_l) \;g \nabla^{0, 1}_{g, J} \,A\,
    (e_j, e_l, e_k)\quad \tmop{by} \quad( \ref{cx-sm-A4})\\
    &  & \\
    & = & \big| \nabla^{0, 1}_{g, J} \,A\big|^2_g \;.
  \end{eqnarray*}
  In conclusion we obtain the identity
  \begin{eqnarray*}
    \frac{1}{6} \,\big| \hat{\nabla}_g \,v\big|^2_g & = & \big |
    \nabla_g \,B\big|^2_g \;\,+\;\, 2 \,\left\langle \partial^g_{_{T_{X, J}}} A\,,
    \overline{\partial}_{_{T_{X, J}}} B \right\rangle_g
\\
\\
& +&
 \frac{1}{2}\,\big |
    \nabla_g \,A\big|^2_g \;\,+\;\, \big| \nabla^{0, 1}_{g, J} \,A\big|^2_g\;,
  \end{eqnarray*}
  and thus
  \begin{eqnarray*}
    \frac{1}{6} \,\big| \hat{\nabla}_g \,v\big|^2_g \;\,-\;\, \big| \nabla_g \,v\big|^2_g &
    = & 2\,\left \langle \partial^g_{_{T_{X, J}}} A\,,
    \overline{\partial}_{_{T_{X, J}}} B \right\rangle_g 
\\
\\
&-& \frac{1}{2}\,\big |
    \nabla_g \,A\big|^2_g \;\,+\;\, \big| \nabla^{0, 1}_{g, J} \,A\big|^2_g\\
    &  & \\
    & = & 2\left \langle \partial^g_{_{T_{X, J}}} A\,,
    \overline{\partial}_{_{T_{X, J}}} B \right\rangle_g 
\\
\\
&-&
 \frac{1}{2}\,\big |
    \nabla^{1, 0}_{g, J} \,A\big|^2_g \;\,+\;\, \frac{1}{2}\,\big | \nabla^{0, 1}_{g, J} \,A\big|^2_g \;.
  \end{eqnarray*}
  We observe now the identity
  \begin{equation}
    \label{der-A-del-A} \big| \nabla^{1, 0}_{g, J} \,A\big|^2_g \;\;=\;\; \big| \partial^g_{_{T_{X,
    J}}} A\big|_g^2 \;.
  \end{equation}
  In fact expanding the squared norm
  \begin{eqnarray*}
    \big| \nabla^{1, 0}_{g, J} \,A\big|^2_g & = & \tmop{Tr}_{_{\mathbbm{R}}} \big(
    \nabla^{1, 0}_{\eta_r} \,A \big)^2 \;\,+\;\, \tmop{Tr}_{_{\mathbbm{R}}} \big(
    \nabla^{1, 0}_{J \eta_r} \,A \big)^2\\
    &  & \\
    & = & 2 \tmop{Tr}_{_{\mathbbm{R}}} \big( \nabla^{1, 0}_{\eta_r} \,A
    \big)^2\;,
  \end{eqnarray*}
  and using the local expression
  \begin{eqnarray*}
    \nabla^{1, 0}_{\eta_r} \,A & = & i\, \partial_r A_{k, \bar{l}} \;
    \bar{\zeta}^{\ast}_l \otimes \zeta_k \;\,+\;\, \tmop{Conjugate}\;,
  \end{eqnarray*}
  we infer the equalities
  \begin{eqnarray*}
    \big| \nabla^{1, 0}_{g, J} \,A\big|^2_g & = & 4\,\big| \partial_r \,A_{k, \bar{l}} \big|^2 \;\,=\;\, \big|
    \partial^g_{_{T_{X, J}}} \,A\big|_g^2 \;.
  \end{eqnarray*}
  The conclusion follows from the second variation formula (\ref{rm-2vr-W})
  for the $\mathcal{W}_{\Omega}$ functional in the Riemannian case. The last
  variation formula in the statement of theorem \ref{lm-Kah-IIvr-W} follows
  from the identity (\ref{Kah-F}).
\end{proof}

We observe that the last variation formula in lemma \ref{lm-Kah-IIvr-W}
follows also from the formula (\ref{part-vr-W}). In fact in the case $v \in
\mathbbm{F}_g$ hold the equalities
\begin{eqnarray*}
  \big| \nabla^{1, 0}_{g, J} \,A\big|^2_g \;\;=\;\; \big| \partial^g_{_{T_{X, J}}} \,A\big|_g^2 & = &
  \left \langle \partial^g_{_{T_{X, J}}} A\,, \overline{\partial}_{_{T_{X, J}}}
  B \right\rangle_g\\
  &  & \\
  & = & - \;\,g \left( \nabla^{1, 0}_{e_p} \,B\; \nabla^{1, 0}_{e_r} \,A\, e_p\,, e_r
  \right)\\
  &  & \\
  & = & g \left( \nabla^{1, 0}_{e_p} \,B\; \nabla^{0, 1}_{e_p} \,B\, e_r\,, e_r
  \right)\\
  &  & \\
  & = & \big| \nabla^{1, 0}_{g, J} \,B\big|^2_g \;\;=\;\; \big| \nabla^{0, 1}_{g, J} \,B\big|^2_g\;,
\end{eqnarray*}
thanks to the identities (\ref{sup-Kh-sm}) and (\ref{Kh-sm-B1}).

\section{Appendix}

\subsection{Operators acting on alternating and symmetric
tensors}\label{apdx1}

We need to explain in this sections a few quite elementary facts in order
to fix a convention inaccuracy which often occurs in differential geometry.
Inaccuracy which is source of frequent mistakes. 
To be precise, with our convention the metric induced on the space of forms is
the restriction of the metric on the space of tensors (without degree
multiplicative factors!).

We consider the natural projectors $A : (T_X^{\ast})^{\otimes p} \rightarrow
\Lambda^p T_X^{\ast}${\hspace{0.25em}}, $\alpha \mapsto A
(\alpha)${\hspace{0.25em}},
\begin{eqnarray*}
  A (\alpha) (v_1, ..., v_p) & = & \sum_{\sigma \in S_p} \varepsilon_{\sigma}
  \hspace{0.25em} \alpha (v_{\sigma_1}, ..., v_{\sigma_1}) \hspace{0.25em},
\end{eqnarray*}
i.e.
\begin{eqnarray*}
  A (\alpha_1 \otimes \cdots \otimes \alpha_p) & = & \bigwedge_{j = 1}^p
  \alpha_j \hspace{0.75em} = \hspace{0.75em} \sum_{\sigma \in S_p}
  \varepsilon_{\sigma} \hspace{0.25em} \alpha_{\sigma_1} \otimes \cdots
  \otimes \alpha_{\sigma_p} \hspace{0.25em},
\end{eqnarray*}
and $S : (T_X^{\ast})^{\otimes p} \rightarrow S^p
T_X^{\ast}${\hspace{0.25em}}, $\alpha \mapsto S (\alpha)${\hspace{0.25em}},
\begin{eqnarray*}
  S (\alpha) (v_1, ..., v_p) & = & \sum_{\sigma \in S_p} \alpha (v_{\sigma_1},
  ..., v_{\sigma_1}) \hspace{0.25em},
\end{eqnarray*}
i.e.
\begin{eqnarray*}
  S (\alpha_1 \otimes \cdots \otimes \alpha_p) & = & S_{j = 1}^p \alpha_j
  \hspace{0.75em} = \hspace{0.75em} \sum_{\sigma \in S_p} \alpha_{\sigma_1}
  \otimes \cdots \otimes \alpha_{\sigma_p} \hspace{0.25em} .
\end{eqnarray*}
Given a Riemannian metric $g$ over $X$ the metric induced over
$(T_X^{\ast})^{\otimes p}$ is given by
\begin{eqnarray*}
  \left\langle \alpha_1 \otimes \cdots \otimes \alpha_p, \beta_1 \otimes
  \cdots \otimes \beta_p \right\rangle_g & = & \prod_{j = 1}^p g (\alpha_j,
  \beta_j) \hspace{0.25em},
\end{eqnarray*}
The restriction of this metric to the subspaces $\Lambda^p T_X^{\ast}$ and
$S^p T_X^{\ast}$ is given respectively by the formulas
\begin{eqnarray}\label{metr-ext}
  \left\langle \alpha_1 \wedge ... \wedge \alpha_p, \beta_1 \wedge ... \wedge
  \beta_p \right\rangle_g & = & p! \det \left( g (\alpha_k, \beta_l) \right)\nonumber
\\\nonumber
  &  & \\
  & = & p! \sum_{\sigma \in S_p} \varepsilon_{\sigma} \prod_{j = 1}^p g
  (\alpha_j, \beta_{\sigma_j}) \hspace{0.25em},
\end{eqnarray}
\[  \]
\begin{eqnarray}
  \label{metr-sym}  \left\langle S_{j = 1}^p \alpha_j, S_{j = 1}^p \beta_j
  \right\rangle_g = p! \sum_{\sigma \in S_p} \prod_{j = 1}^p g (\alpha_j,
  \beta_{\sigma_j}) \hspace{0.25em} . &  & 
\end{eqnarray}
In fact let prove first the identity (\ref{metr-ext}). Let $A_{i, j} : = g
(\alpha_i, \beta_j)$. We define the components $(A_{\sigma, \tau})_{i, j}$ of
the matrix $A_{\sigma, \tau}$ as $(A_{\sigma, \tau})_{i, j} : = A_{\sigma_i,
\tau_j} = g (\alpha_{\sigma_i}, \beta_{\tau_j})$. We define also
\begin{eqnarray*}
  \prod A_{\sigma, \tau} & = & \prod_{j = 1}^p g (\alpha_{\sigma_j},
  \beta_{\tau_j}) \hspace{0.25em} .
\end{eqnarray*}
Then
\begin{eqnarray*}
  \left\langle \alpha_1 \wedge ... \wedge \alpha_p, \beta_1 \wedge ... \wedge
  \beta_p \right\rangle_g & = & \sum_{\sigma, \tau \in S_p}
  \varepsilon_{\sigma} \varepsilon_{\tau} \prod A_{\sigma, \tau}
  \hspace{0.25em} .
\end{eqnarray*}
The identity (\ref{metr-ext}) follow from
\begin{equation}
  \label{metr-ext-0}  \sum_{\sigma, \tau \in S_p} \varepsilon_{\sigma}
  \varepsilon_{\tau} \prod A_{\sigma, \tau} \hspace{0.75em} = \hspace{0.75em} p! \sum_{\sigma
  \in S_p} \varepsilon_{\sigma} \prod A_{I, \sigma}\;,
\end{equation}
that we prove below. We observe first the identity
\begin{equation}
  \label{metr-ext-1}  \sum_{\sigma \in S_p} \varepsilon_{\sigma} \prod A_{I,
  \sigma} \hspace{0.75em} = \hspace{0.75em}  \sum_{\sigma \in S_p}
  \varepsilon_{\sigma} \prod A_{\sigma, I} \;.
\end{equation}
We observe also that for any $\tau \in S_p$ hold the identity
\begin{equation}
  \label{metr-ext-2}  \sum_{\sigma \in S_p} \varepsilon_{\sigma} \prod A_{I,
  \sigma} \hspace{0.75em} = \hspace{0.75em}\varepsilon_{\tau} \sum_{\sigma \in S_p}
  \varepsilon_{\sigma} \prod A_{I, \tau \cdot \sigma}\;,
\end{equation}
since $\varepsilon_{\tau \cdot \sigma} = \varepsilon_{\sigma}
\varepsilon_{\tau}$. Moreover
\begin{eqnarray*}
  \sum_{\sigma \in S_p} \varepsilon_{\sigma} \prod A_{I, \tau \cdot \sigma} &
  = & \sum_{\sigma \in S_p} \varepsilon_{\sigma} \prod (A_{I, \tau})_{I,
  \sigma}\\
  &  & \\
  & = & \sum_{\sigma \in S_p} \varepsilon_{\sigma} \prod (A_{I,
  \tau})_{\sigma, I} \hspace{1em} \hspace{2em}  \text{\tmop{thanks} \tmop{to}
  (\ref{metr-ext-1})}\\
  &  & \\
  & = & \sum_{\sigma \in S_p} \varepsilon_{\sigma} \prod A_{\sigma, \tau}
  \hspace{0.25em} .
\end{eqnarray*}
This combined with (\ref{metr-ext-2}) implies (\ref{metr-ext-0}). The identity
(\ref{metr-sym}) follows dropping $\varepsilon_{\sigma}$ and
$\varepsilon_{\tau}$ in the previous computation.

Let now $(F, h)$ be a hermitian vector bundle over a Riemann manifold $(M, g)$
(of dimension $m$) equipped with a $h$-hermitian connection $\nabla_F$ and let
$\nabla$ be the induced hermitian connection over the hermitian vector bundle
\[ \left( (T^{\ast}_M)^{\otimes p} \otimes_{_{\mathbbm{R}}} F, \left\langle
   \cdot, \cdot \right\rangle \right) \hspace{0.25em}, \]
where $\left\langle \cdot, \cdot \right\rangle$ is the induced hermitian
product. Moreover consider the first order differential operator
\[ \nabla : C^{\infty} \left( (T^{\ast}_M)^{\otimes p}
   \otimes_{_{\mathbbm{R}}} F \right) \longrightarrow C^{\infty} \left(
   (T^{\ast}_M)^{\otimes p + 1} \otimes_{_{\mathbbm{R}}} F \right)
   \hspace{0.25em}, \]
defined by $\nabla \alpha (\xi_0, ..., \xi_p) : = \nabla_{\xi_0} \alpha
(\xi_1, ..., \xi_p)$ for all vectors $\xi_0, ..., \xi_p \in T_{M, x}$. The
$h$-hermitian connection $\nabla_F$ on $F$ extends to an exterior derivation
on the sheaf $C^{\infty} (\Lambda^p T^{\ast}_M \otimes_{_{\mathbbm{R}}} F)$
that we still denote by $\nabla_F$,
\[ \nabla_F : C^{\infty} \left( \Lambda^p T^{\ast}_M \otimes_{_{\mathbbm{R}}}
   F \right) \longrightarrow C^{\infty} \left( \Lambda^{p + 1} T^{\ast}_M
   \otimes_{_{\mathbbm{R}}} F \right) \hspace{0.25em}, \]
The relation with the operator $\nabla$ is
\begin{eqnarray*}
  \nabla_F \, \alpha \,(\xi_0, ..., \xi_p) & = & \sum_{j = 0}^p \,(-
  1)^j \, \nabla \,\alpha\, (\xi_j, \xi_0, ..., \hat{\xi}_j, ...,
  \xi_p) \;.
\end{eqnarray*}
We observe that
\[ \nabla : C^{\infty} \left( \Lambda^p T^{\ast}_M \otimes_{_{\mathbbm{R}}} F
   \right) \longrightarrow C^{\infty} \left( T^{\ast}_M
   \otimes_{_{\mathbbm{R}}} \Lambda^p T^{\ast}_M \otimes_{_{\mathbbm{R}}} F
   \right) \;. \]
In a similar way we define the operator
\[ \hat{\nabla}_F : C^{\infty} \left( S^p T^{\ast}_M \otimes_{_{\mathbbm{R}}}
   F \right) \longrightarrow C^{\infty} \left( S^{p + 1} T^{\ast}_M
   \otimes_{_{\mathbbm{R}}} F \right)\;, \]
\begin{eqnarray*}
  \hat{\nabla}_F \, \alpha\, (\xi_0, ..., \xi_p) & = & \sum_{j =
  0}^p \nabla \,\alpha\, (\xi_j, \xi_0, ..., \hat{\xi}_j, ..., \xi_p) \;.
\end{eqnarray*}
We observe in fact that
\[ \nabla : C^{\infty} \left( S^p T^{\ast}_M \otimes_{_{\mathbbm{R}}} F
   \right) \longrightarrow C^{\infty} \left( T^{\ast}_M
   \otimes_{_{\mathbbm{R}}} S^p T^{\ast}_M \otimes_{_{\mathbbm{R}}} F \right)
   \hspace{0.25em} . \]
The formal adjoint operator
\[ \nabla^{\ast} : C^{\infty} \left( (T^{\ast}_M)^{\otimes p + 1}
   \otimes_{_{\mathbbm{R}}} F \right) \longrightarrow C^{\infty} \left(
   (T^{\ast}_M)^{\otimes p} \otimes_{_{\mathbbm{R}}} F \right)
   \hspace{0.25em}, \]
of the operator $\nabla$ is given by the formula
\[ \nabla^{\ast} \alpha \hspace{0.25em} (\xi_1, ..., \xi_{p - 1})
   \hspace{0.75em} : = \hspace{0.75em} - \hspace{0.25em} \tmop{Tr}_g \nabla
   \,\alpha \,(\cdot, \cdot, \xi_1, ..., \xi_{p - 1}) . \]
We remark now that $\nabla_F^{\ast} = (p + 1) \,\nabla^{\ast}$ in restriction to
$C^{\infty} (\Lambda^{p + 1} T^{\ast}_M \otimes_{_{}} F)$, i.e
\[ \nabla_F^{\ast} \hspace{0.75em} = \hspace{0.75em} (p + 1) \,\nabla^{\ast} :
   C^{\infty} \left( \Lambda^{p + 1} T^{\ast}_M \otimes_{_{\mathbbm{R}}} F
   \right) \longrightarrow C^{\infty} \left( \Lambda^p T^{\ast}_M
   \otimes_{_{\mathbbm{R}}} F \right) \hspace{0.25em} . \]
In fact this follows from the identity
\begin{equation}
  \label{cntr-altfm}  \left\langle \nabla_F \hspace{0.25em} \alpha\,, \beta
  \right\rangle = (p + 1) \left\langle \nabla \alpha\,, \beta \right\rangle\;,
\end{equation}
for any $F$-valued $p$-form $\alpha$ and any $F$-valued $(p + 1)$-form $\beta$.
Moreover we observe that $\hat{\nabla}_F^{\ast} = (p + 1) \nabla^{\ast}$ in restriction to
$C^{\infty} (S^{p + 1} T^{\ast}_M \otimes_{_{}} F)$, i.e
\[ \hat{\nabla}_F^{\ast} \hspace{0.75em} = \hspace{0.75em} (p + 1)
   \nabla^{\ast} : C^{\infty} \left( S^{p + 1} T^{\ast}_M
   \otimes_{_{\mathbbm{R}}} F \right) \longrightarrow C^{\infty} \left( S^p
   T^{\ast}_M \otimes_{_{\mathbbm{R}}} F \right) \hspace{0.25em} . \]
In fact this follows from the identity
\begin{equation}
  \label{cntr-smfm}  \left\langle \hat{\nabla}_F \hspace{0.25em} \alpha\,, \beta
  \right\rangle = (p + 1) \left\langle \nabla \alpha\,, \beta \right\rangle\;,
\end{equation}
for any $F$-valued symmetric $p$-tensor $\alpha$ and any $F$-valued symmetric $(p
+ 1)$-tensor $\beta$. 
Let prove now the identities (\ref{cntr-altfm}) and
(\ref{cntr-smfm}).

{\tmstrong{Proof of the identity (\ref{cntr-altfm})}}. Let $(\theta_s)_s$ be
a $h$-orthonormal frame of the bundle $F$ of complex rank $r$ and let
$(e_k)_k$ be a $g$-orthonormal frame of $T_X$. For any $I = (i_1, ..., i_p)$,
$1 \le i_k < i_{k + 1} \le n$, we define $\underline{e}_I^{\ast} \assign
e_{i_1} \wedge \ldots \wedge e_{i_p}$. 
We observe that $(\underline{e}_I^{\ast})_{|I| = p}$ is a local frame of the bundle $\Lambda^p
T^{\ast}_M$ which satisfies $\left\langle \underline{e}_I^{\ast},
\underline{e}_J^{\ast} \right\rangle_g = 0$ if and only if $I {\not =} J$. Moreover $|
\underline{e}_I^{\ast} |^2_g = p!$. We observe also that the coefficients of
the local expressions
\begin{eqnarray*}
\nabla \alpha 
&=&
\sum_{s = 1}^r \,\sum_{j =
   1}^m \,\sum_{|I| = p} \,C_{j, I}^s \hspace{0.25em} e_j^{\ast} \otimes
   \underline{e}^{\ast}_I \otimes \theta_s \;,
\\
\\
\nabla_F\, \alpha
&=&
\sum_{s = 1}^r
   \sum_{|K| = p + 1} B_K^s \hspace{0.25em} \underline{e}^{\ast}_K \otimes \theta_s\;,
\end{eqnarray*}
are related by the formula
\begin{eqnarray*}
  B_K^s & = & \sum_{j = 0}^p (- 1)^j C^s_{k_j, \hat{K}_j}\;,
\end{eqnarray*}
where $\hat{K}_j : = (k_0, ..., \hat{k}_j, ..., k_p)$. On the other hand
\begin{eqnarray*}
  (p + 1) \left\langle \nabla \alpha\,, \underline{e}^{\ast}_K \otimes \theta_s
  \right\rangle & = & (p + 1) \sum_{j = 1}^n \left\langle \nabla_{e_j} \alpha\,,
  e_j \;\neg\; \underline{e}^{\ast}_K \otimes \theta_s \right\rangle \\
  &  & \\
  & = & (p + 1) \sum_{j = 1}^p \left\langle \nabla_{e_{k_j}} \alpha\,, e_{k_j}
  \;\neg\; \underline{e}^{\ast}_K \otimes \theta_s \right\rangle\\
  &  & \\
  & = & (p + 1) \sum_{j = 1}^p (-1)^j\left\langle \nabla_{e_{k_j}} \alpha\,,
  \underline{e}^{\ast}_{\hat{K}_j} \otimes \theta_s \right\rangle \\
  &  & \\
  & = & (p + 1)! \sum_{j = 0}^p (-1)^jC^s_{k_j, \hat{K}_j} \\
  &  & \\
  & = & (p + 1)! \,B_K^s\\
  &  & \\
  & = & \left\langle \nabla_F \hspace{0.25em} \alpha\,, \underline{e}^{\ast}_K
  \otimes \theta_s \right\rangle \hspace{0.25em},
\end{eqnarray*}
which proves the required identity (\ref{cntr-altfm}).

{\tmstrong{Proof of the identity (\ref{cntr-smfm})}}.
For any $I = (i_1, ..., i_p)$, $1 \le i_k
\le i_{k + 1} \le n$, we define
\[ e^{\ast}_I \hspace{0.75em} : = \hspace{0.75em} \sum_{\sigma \in S_p}
   e^{\ast}_{i_{\sigma_1}} \otimes \cdots \otimes e^{\ast}_{i_{\sigma_p}}
   \hspace{0.25em}. \]
We observe that $(e^{\ast}_I)_{|I| = p}$ is a local frame of the bundle $S^p T^{\ast}_M$ which
satisfies $\left\langle e_I^{\ast}, e_J^{\ast} \right\rangle_g = 0$ if and only if $I
{\not =} J$. Moreover
\begin{eqnarray*}
|e_I^{\ast} |^2_g & = & p ! \,e_I^{\ast} (e_{i_1}, \ldots, e_{i_p}) \;.
\end{eqnarray*}
We observe also that the coefficients of the local expressions
\begin{eqnarray*}
\nabla \alpha 
&=&
\sum_{s = 1}^r \,\sum_{j =
   1}^m \,\sum_{|I| = p} \,C_{j, I}^s \hspace{0.25em} e_j^{\ast} \otimes
   e^{\ast}_I \otimes \theta_s \;,
\\
\\
\hat{\nabla}_F\, \alpha
&=&
\sum_{s = 1}^r
   \sum_{|K| = p + 1} B_K^s \hspace{0.25em} e^{\ast}_K \otimes \theta_s\;,
\end{eqnarray*}
are related by the formula
\begin{eqnarray*}
  B_K^s \,|e_K^{\ast} |^2_g & = & (p + 1) \sum_{j = 0}^p C^s_{k_j, \hat{K}_j}
  \, |e^{\ast}_{\hat{K}_j} |^2_g\;,
\end{eqnarray*}
where $\hat{K}_j : = (k_0, ..., \hat{k}_j, ..., k_p)$. On the other hand
\begin{eqnarray*}
  (p + 1) \left\langle \nabla \alpha\,, e^{\ast}_K \otimes \theta_s
  \right\rangle & = & (p + 1) \sum_{j = 1}^n \left\langle \nabla_{e_j} \alpha\,,
  e_j \;\neg\; e^{\ast}_K \otimes \theta_s \right\rangle \\
  &  & \\
  & = & (p + 1) \sum_{j = 1}^p N^{-1}_j\left\langle \nabla_{e_{k_j}} \alpha\,, e_{k_j}
  \neg\; e^{\ast}_K \otimes \theta_s \right\rangle\;,
\end{eqnarray*}
where $N_j \assign \tmop{Card} \left\{ r \in \left\{ 0, \ldots p\} \mid k_r =
k_j \right\} \geqslant 1 \right.$. Furthermore
\begin{eqnarray*}
  e_{k_j} \neg \;e^{\ast}_K & = & \sum_{\sigma \in S_{p + 1}}
  e^{\ast}_{k_{\sigma_0}} \otimes \cdots (e^{\ast}_{k_{\sigma_j}} \cdot
  e_{k_j}) \cdots \otimes e^{\ast}_{k_{\sigma_p}}\\
  &  & \\
  & = & \sum_{\sigma \in S_{p + 1}, k_{\sigma_j} = k_j}
  e^{\ast}_{k_{\sigma_0}} \otimes \cdots \otimes
  \widehat{e^{\ast}_{k_{\sigma_j}}} \otimes \cdots \otimes
  e^{\ast}_{k_{\sigma_p}}\\
  &  & \\
  & = & N_j \sum_{\sigma \in S_{p + 1}, \sigma_j = j} e^{\ast}_{k_{\sigma_0}}
  \otimes \cdots \otimes \widehat{e^{\ast}_{k_{\sigma_j}}} \otimes \cdots
  \otimes e^{\ast}_{k_{\sigma_p}}\\
  &  & \\
  & = & N_j\, e^{\ast}_{\hat{K}_j} \;.
\end{eqnarray*}
We infer the identities
\begin{eqnarray*}
  (p + 1) \left\langle \nabla \alpha\,, e^{\ast}_K \otimes \theta_s
  \right\rangle 
  & = & (p + 1) \sum_{j = 1}^p \left\langle \nabla_{e_{k_j}} \alpha\,,
  e^{\ast}_{\hat{K}_j} \otimes \theta_s \right\rangle \\
  &  & \\
  & = & (p + 1) \sum_{j = 0}^p C^s_{k_j, \hat{K}_j} |e^{\ast}_{\hat{K}_j}
  |^2_g\\
  &  & \\
  & = & B_K^s \,|e_K^{\ast} |^2_g\\
  &  & \\
  & = & \left\langle \hat{\nabla}_F \hspace{0.25em} \alpha\,, e^{\ast}_K
  \otimes \theta_s \right\rangle \hspace{0.25em},
\end{eqnarray*}
which show the required identity (\ref{cntr-smfm}).

\subsection{The first variation of Perelman's $\mathcal{W}$
functional}\label{apdx2}

We consider Perelman's $\mathcal{W}$-functional
\begin{eqnarray*}
  \mathcal{W} (g, f) & : = & \int_X \left( | \nabla_g f|^2_g \hspace{0.75em} +
  \hspace{0.75em} \tmop{Scal}_g \hspace{0.75em} + \hspace{0.75em} 2 f
  \hspace{0.75em} - m \right) e^{- f} dV_g\\
  &  & \\
  & = & \int_X \left( \Delta_g f \hspace{0.75em} + \hspace{0.75em}
  \tmop{Scal}_g \hspace{0.75em} + \hspace{0.75em} 2 f \hspace{0.75em} - m
  \right) e^{- f} dV_g \hspace{0.25em} .
\end{eqnarray*}
(We use here the identity $\Delta_g e^{- f} = (| \nabla_g f|^2_g - \Delta_g f)
e^{- f}$.) If we set
\[ \mathcal{W}_{\Omega} (g) \hspace{0.75em} : = \hspace{0.75em} \mathcal{W}
   (g, \log (dV_g / \Omega)) \hspace{0.25em}, \hspace{2em} \text{\tmop{and}}
   \hspace{2em} h_g^{\Omega} \hspace{0.75em} : = \hspace{0.75em} \tmop{Ric}_g
   (\Omega) \hspace{0.75em} - \hspace{0.75em} g \hspace{0.25em}, \]
then hold the identity
\[ \mathcal{W}_{\Omega} (g) \hspace{0.75em} = \hspace{0.75em} \int_X \left(
   \tmop{Tr}_g h_g^{\Omega} \hspace{0.75em} + \hspace{0.75em} 2 \,\log
   \frac{dV_g}{\Omega} \right) \Omega \hspace{0.25em}, \]
Thus if $(g_t)_t$ is a family of Riemannian metrics and $h_t : =
h_{g_t}^{\Omega}$ then
\begin{eqnarray*}
  \frac{d}{dt} \hspace{0.25em} \mathcal{W}_{\Omega} (g_t) & = & \frac{d}{dt}
  \hspace{0.25em} \int_X \left[ \tmop{Tr}_{_{\mathbbm{R}}} (g_t^{- 1} h_t)
  \hspace{0.75em} + \hspace{0.75em} 2 \log \frac{dV_{g_t}}{\Omega} \right]
  \Omega\\
  &  & \\
  & = & \int_X \left[ \tmop{Tr}_{_{\mathbbm{R}}} \left( - \hspace{0.25em}
  \dot{g}_t^{\ast} h^{\ast}_t \hspace{0.75em} + \hspace{0.75em}
  \dot{h}^{\ast}_t \right) \hspace{0.75em} + \hspace{0.75em} \tmop{Tr}_{g_t}
  \dot{g} \right] \Omega\\
  &  & \\
  & = & \int_X \big\langle \dot{g}_t\,, - \hspace{0.25em} h_t
  \big\rangle_{g_t} \Omega \hspace{0.75em} + \hspace{0.75em} \int_X
  \left\langle g_t\,, \frac{d}{dt} \hspace{0.25em} \tmop{Ric}_{g_t} (\Omega)
  \right\rangle_{g_t} \Omega \hspace{0.25em} .
\end{eqnarray*}
Using corollary \ref{vr-ORc-adj} and integrating by parts we infer
\begin{eqnarray*}
  \hspace{0.25em} \int_X \left\langle g_t\,, \frac{d}{dt} \hspace{0.25em}
  \tmop{Ric}_{g_t} (\Omega) \right\rangle_{g_t} \Omega & = & - \hspace{0.75em}
  \frac{1}{6} \int_X \left\langle g_t\,, \hat{\nabla}_{g_t}^{\ast_{_{\Omega}}} 
  \hspace{0.25em} \hat{\nabla}_{g_t}  \dot{g}_t \right\rangle_{g_t} \Omega\\
  &  & \\
  & + & \hspace{0.25em} \int_X \left\langle g_t\,,
  \nabla_{g_t}^{\ast_{_{\Omega}}} \nabla_{g_t}  \dot{g}_t \right\rangle_{g_t}
  \Omega\\
  &  & \\
  & = & - \hspace{0.75em} \frac{1}{6} \,\int_X \left\langle \hat{\nabla}_{g_t}
  g_t\,, \hat{\nabla}_{g_t}  \dot{g}_t \right\rangle_{g_t} \Omega\\
  &  & \\
  & + & \int_X \big\langle \nabla_{g_t} g_t\,, \nabla_{g_t}  \dot{g}_t
  \big\rangle_{g_t} \Omega\\
  &  & \\
  & = & 0 \hspace{0.25em},
\end{eqnarray*}
which implies Perelman's \cite{Per} variation formula
\begin{eqnarray*}
  \frac{d}{dt} \hspace{0.25em} \mathcal{W}_{\Omega} (g_t) & = & - \;\int_X
  \big\langle \dot{g}_t, h_t \big\rangle_{g_t} \Omega\; .
\end{eqnarray*}
\subsection{The curvature of the space $(\mathcal{M}, G)$}\label{apdx3}
We show that the Riemannian space $(\mathcal{M}, G)$ is non-positively
curved. In fact let $u, v, w \in \mathcal{H}$ and consider them also like
constant $\mathcal{H}$-valued vector fields over $\mathcal{M}.$ Using the expression (\ref{Krist-G}) we expand the
Riemannian curvature tensor $\mathcal{R}_G$ as follows;
\begin{eqnarray*}
  2\,\mathcal{R}_G (u, v) w & = & 2\, \nabla_{G, u} \nabla_{G, v} w \;\,-\;\, 2\, \nabla_{G,
  v} \nabla_{G, u} w\\
  &  & \\
  & = & 2 \,\nabla_{G, u}  \left( \Gamma_G (v, w) \right) \;\,-\;\, 2\, \nabla_{G, v} 
  \left( \Gamma_G (u, w) \right)\\
  &  & \\
  & = & 2 \,D \left( \Gamma_G (v, w) \right) (u) \;\,-\;\, 2\, D \left( \Gamma_G (u, w)
  \right) (v)\\
  &  & \\
  & + & 2 \,\Gamma_G (u, \Gamma_G (v, w)) \;\,-\;\, 2\, \Gamma_G (v, \Gamma_G (u, w))
\\
\\
  & = & v\, u_g^{\ast} w_g^{\ast} \;\,+\;\, w\, u_g^{\ast} v_g^{\ast} \;\,-\;\, u \,\Gamma_G (v,
  w)_g^{\ast} \;\,-\;\, \Gamma_G (v, w) \,u_g^{\ast} \\
  &  & \\
  & - & u \,v_g^{\ast} w_g^{\ast} \;\,-\;\, w\, v_g^{\ast} u_g^{\ast} \;\,+\;\, v \,\Gamma_G (u,
  w)_g^{\ast} \;\,+\;\, \Gamma_G (u, w) \,v_g^{\ast}\\
  &  & \\
  & = & - \;\,g \left[ u_g^{\ast} \hspace{0.25em}, v^{\ast}_g \right] w^{\ast}_g
  \;\,+\;\, w \left[ u_g^{\ast} \hspace{0.25em}, v^{\ast}_g \right]\\
  &  & \\
  & + & \frac{1}{2}\; u \,(v_g^{\ast} \,w_g^{\ast} \;\,+\;\, w^{\ast}_g \,v_g^{\ast}) \;\,+\;\,
  \frac{1}{2}\; (v\, w_g^{\ast} \;\,+\;\, w \,v_g^{\ast})\, u^{\ast}_g\\
  &  & \\
  & - & \frac{1}{2} \;v\, (u_g^{\ast} w_g^{\ast} \;\,+\;\, w^{\ast}_g u_g^{\ast}) \;\,-\;\,
  \frac{1}{2} (u\, w_g^{\ast} \;\,+\;\, w\, u_g^{\ast}) \,v^{\ast}_g \\
  &  & \\
  & = &  - \;\,\frac{1}{2} \;g \Big[ \left[ u_g^{\ast} \hspace{0.25em},
  v^{\ast}_g \right], w^{\ast}_g \Big] \;.
\end{eqnarray*}
We obtain the well known formula for the curvature operator of the metric $G$,
\begin{equation}
  \label{curvat-formula} \mathcal{R}_G (g) (u, v) w\;\; =\;\; -\;\, \frac{1}{4}\; g \Big[
  \left[ u_g^{\ast} \hspace{0.25em}, v^{\ast}_g \right], w^{\ast}_g \Big]\; .
\end{equation}
We remind that the curvature form is defined by the identity
$$
R_G (u, v, \xi, \eta) \;\;\assign\;\;
G (\mathcal{R}_G (u, v) \eta, \xi)\;,
$$ 
and the sectional curvature is given by
the formula
\begin{eqnarray*}
  \sigma_G (u, v) & \assign &  R_G (u, v, u, v) \;\;=\;\; -\;\, \frac{1}{4}\,
  \int_X \tmop{Tr}_{_{\mathbbm{R}}} \Big\{ \Big[ \left[ u_g^{\ast} \hspace{0.25em},
  v^{\ast}_g \right], v^{\ast}_g \Big] u^{\ast}_g \Big\}\; \Omega\; .
\end{eqnarray*}
We set for notation simplicity $U : = u^{\ast}_g$ and $V \assign v^{\ast}_g$ .
We expand the expression
\begin{eqnarray*}
  \Big[ \left[ U, V \right], V \Big] U & = & \left( \left[ U, V \right] V
  \;\,-\;\, V \left[ U, V \right]  \right) U\\
  &  & \\
  & = & \left( U V^2 \;\,-\;\, 2\, V U V \;\,+\;\, V^2 U \right) U\\
  &  & \\
  & = & U V^2 U \;\,+\;\, V^2 U^2 \;\,-\;\, 2\, (V U)^2\; .
\end{eqnarray*}
Taking the trace we obtain
\begin{eqnarray*}
  \tmop{Tr}_{_{\mathbbm{R}}} \Big\{ \Big[ \left[ U, V \right], V \Big] U
  \Big\} & = & 2 \,\tmop{Tr}_{_{\mathbbm{R}}} \left[ V^2 U^2 \;\,-\;\, (V U)^2 \right]
  \;.
\end{eqnarray*}
On the other hand we expand the therm
\begin{eqnarray*}
  \left[ U, V \right]^2 & = & (U V)^2 \;\,-\;\, V U^2 V \;\,-\;\, U V^2 U \;\,+\;\, (V U)^2 \;.
\end{eqnarray*}
Taking the trace we deduce
\begin{eqnarray*}
  \tmop{Tr}_{_{\mathbbm{R}}}  \left[ U, V \right]^2 & = &
  \tmop{Tr}_{_{\mathbbm{R}}} \left[ (U V)_g^T (U V)_g^T \;\,-\;\, U^2 V^2 \;\,-\;\, V^2 U^2 \;\,+\;\,
  (V U)^2 \right]\\
  &  & \\
  & = & 2 \,\tmop{Tr}_{_{\mathbbm{R}}} \left[ (V U)^2 \;\,-\;\, V^2 U^2 \right]\; .
\end{eqnarray*}
Thus
\begin{eqnarray*}
  \tmop{Tr}_{_{\mathbbm{R}}} \Big\{ \Big[ \left[ U, V \right], V \Big] U
  \Big\} & = & - \tmop{Tr}_{_{\mathbbm{R}}}  \left[ U, V \right]^2 
\\
\\
&=&
  \tmop{Tr}_{_{\mathbbm{R}}} \left\{ \left[ U, V \right] \cdot \left[ U, V
  \right]_g^T  \right\}\;,
\end{eqnarray*}
since the endomorphism $[U, V]$ is $g$-anti-symmetric due to the fact that $U,
V$ are $g$-symmetric. We infer the well known formula for the sectional
curvature of the metric $G$,
\[ \sigma_G (u, v) \;\;=\;\; -\;\, \frac{1}{4} \,\int_X \big| \left[ u_g^{\ast} \hspace{0.25em},
   v^{\ast}_g \right] \big|^2_g \;\Omega \;\;\leqslant\;\; 0 \;. \]
This shows the required conclusion.
\newpage
{\bf Acknowledgments}. We warmly thanks Patrick G\'erard and Pierre Pansu for
stimulating discussions. We thanks in particular Pierre Pansu for pointing
out an inaccuracy in our computation of the classic curvature formula
(\ref{curvat-formula}) in a preliminary version of the manuscript.

\vspace{1cm}
\noindent
Nefton Pali
\\
Universit\'{e} Paris Sud, D\'epartement de Math\'ematiques 
\\
B\^{a}timent 425 F91405 Orsay, France
\\
E-mail: \textit{nefton.pali@math.u-psud.fr}
\end{document}